\newtheorem{theorem}{Theorem}
\newtheorem{lemma}[theorem]{Lemma}
\newtheorem{corollary}[theorem]{Corollary}
\newtheorem{proposition}[theorem]{Proposition}
\theoremstyle{definition}
\newtheorem{remark}[theorem]{Remark}
\newtheorem{definition}[theorem]{Definition}
\newtheorem{example}[theorem]{Example}
\let\phi=\varphi
\def\Ann{\operatorname{Ann}}
\def\Ass{\operatorname{Ass}}
\def\Min{\operatorname{Min}}
\def\Spec{\operatorname{Spec}}
\let\oldbigwedge\bigwedge
\def\BIGwedge{{\textstyle\oldbigwedge}}
\def\medwedge{{\scriptstyle\oldbigwedge}}
\def\bigwedge{\mathchoice{\BIGwedge}{\BIGwedge}{\medwedge}{}}
\DeclareMathOperator{\Nil}{Nil}
\DeclareMathOperator{\FId}{FId}
\DeclareMathOperator{\Id}{Id}
\DeclareMathOperator{\Sub}{Sub}
\DeclareMathOperator{\zd}{zd}
\DeclareMathOperator{\supp}{supp}
\let\epsilon=\varepsilon
\begin{document}
\title{On the Content of Polynomials Over Semirings and Its Applications}

\author{Peyman Nasehpour}

\address{Peyman Nasehpour, Department of Engineering Science, Faculty of Engineering, University of Tehran, Tehran, Iran}
\email{nasehpour@gmail.com}

\begin{abstract}
In this paper, we prove that Dedekind-Mertens lemma holds only for those semimodules whose subsemimodules are subtractive. We introduce Gaussian semirings and prove that bounded distributive lattices are Gaussian semirings. Then we introduce weak Gaussian semirings and prove that a semiring is weak Gaussian if and only if each prime ideal of this semiring is subtractive. We also define content semialgebras as a generalization of polynomial semirings and content algebras and show that in content extensions for semirings, minimal primes extend to minimal primes and discuss zero-divisors of a content semialgebra over a semiring who has Property (A) or whose set of zero-divisors is a finite union of prime ideals. We also discuss formal power series semirings and show that under suitable conditions, they are good examples of weak content semialgebras.
\end{abstract}

\maketitle

\begin{center} Dedicated to Professor Winfried Bruns  \end{center}

\section*{0. Introduction}

Semirings not only have significant applications in different fields such as automata theory in theoretical computer science, (combinatorial) optimization theory, and generalized fuzzy computation, but are fairly interesting generalizations of two broadly studied algebraic structures, i.e. rings and bounded distributive lattices. Especially polynomials and formal power series over semirings have important role in applied mathematics\footnote{2010 Mathematics Subject Classification: 16Y60, 13B25, 13F25, 06D75.
Keywords: Semiring, Bounded distributive lattice, Semimodule, Semialgebra, Semiring polynomials, Semimodule polynomials, Content semimodule, Content semialgebra, Weak content semialgebra, Dedekind-Mertens content formula, Gauss' lemma, Few zero-divisors, McCoy's property, Minimal prime, Property (A), Primal semiring, Gaussian semiring, Weak Gaussian semiring}.

One of the interesting and helpful concepts for studying polynomial rings is the concept of the content of a polynomial. In fact, there are important connections between the contents of two polynomials over a ring and the content of their multiplication.

More precisely, let for the moment, $(R,+,\cdot,0,1)$ be a commutative ring with identity, $X$ an indeterminate over the ring $R$ and define the content of a polynomial $f \in R[X]$, denoted by $c(f)$, to be the $R$-ideal generated by the coefficients of $f$. A celebrated theorem in the multiplicative ideal theory of commutative rings, known as Dedekind-Mertens content formula that is a generalization of Gauss' lemma on primitive polynomials, states that for all $f,g \in R[X]$, there exists a nonnegative integer $m \leq \deg(g)$ such that $c(f)^m c(fg) = c(f)^{m+1} c(g)$ (\cite{AG}).

Since the concept of the content of a polynomial and Dedekind-Mertens content formula have some interesting applications in commutative algebra (cf. \cite{AG}, \cite{AK}, \cite{BG}, \cite{HH}, \cite{Na}, \cite{No}, \cite{OR}, \cite{R} and \cite{T}) and much of the theory of rings continues to make sense when applied to arbitrary semirings (cf. \cite{G} or \cite{HW}), the question may arise if similar applications can be found for polynomials over semirings. The main purpose of the present paper is to investigate the content of polynomials over semirings and to show their applications.

First we explain what we mean by a semiring. More on semirings can be found in the books \cite{G} and \cite{HW} for example. In this paper, by a semiring, we understand an algebraic structure, consisting of a nonempty set $S$ with two operations of addition and multiplication such that the following conditions are satisfied:

\begin{enumerate}
\item $(S,+)$ is a commutative monoid with identity element $0$;
\item $(S,\cdot)$ is a commutative monoid with identity element $1 \not= 0$;
\item Multiplication distributes over addition, i.e. $a\cdot (b+c) = a \cdot b + a \cdot c$ for all $a,b,c \in S$;
\item The element $0$ is the absorbing element of the multiplication, i.e. $s \cdot 0=0$ for all $s\in S$.
\end{enumerate}

A nonempty subset $I$ of a semiring $S$ is said to be an ideal of $S$, if $a+b \in I$ for all $a,b \in I$ and $sa \in I$ for all $s \in S$ and $a \in I$. Similar to the ideal theory of commutative rings, it is easy to see that if $a_1, a_2, \ldots , a_n \in S$, then the finitely generated ideal $(a_1, a_2, \ldots , a_n)$ of $S$ is the set of all linear combinations of the elements $a_1, a_2, \ldots , a_n$, i.e. $$(a_1, a_2, \ldots , a_n) = \{ s_1 a_1 + s_2 a_2 + \cdots + s_n a_n : s_1, s_2, \ldots, s_n \in S \}.$$

A nonempty subset $P$ of a semiring $S$ is said to be a prime ideal of $S$, if $P \not= S$ is an ideal of $S$ such that $ab \in P$ implies either $a\in P$ or $b\in P$ for all $a,b \in S$.

An ideal $I$ of a semiring $S$ is said to be subtractive, if $a+b \in I$ and $a \in I$ implies $b \in I$ for all $a,b \in S$. We say that a semiring $S$ is subtractive if every ideal of the semiring $S$ is subtractive.

Now let $(M,+,0)$ be a commutative additive monoid. The monoid $M$ is said to be an $S$-semimodule if there is a function, called scalar product, $\lambda: S \times M \longrightarrow M$, defined by $\lambda (s,m)= sm$ such that the following conditions are satisfied:

\begin{enumerate}
\item $s(m+n) = sm+sn$ for all $s\in S$ and $m,n \in M$;
\item $(s+t)m = sm+tm$ and $(st)m = s(tm)$ for all $s,t\in S$ and $m\in M$;
\item $s\cdot 0=0$ for all $s\in S$ and $0 \cdot m=0$ and $1 \cdot m=m$ for all $m\in M$.
\end{enumerate}

A nonempty subset $N$ of an $S$-semimodule $M$ is said to be an $S$-subsemimodule of $M$, if $m+n \in N$ for all $m,n \in N$ and $sn \in N$ for all $s \in S$ and $n \in N$. Similar to module theory over commutative rings, it is, then, easy to see that if $m_1, m_2, \ldots , m_n \in M $, then the finitely generated $S$-subsemimodule $(m_1, m_2, \ldots , m_n)$ of $M$ is the set of all linear combinations of the elements $m_1, m_2, \ldots, m_n$, i.e. $$(m_1, m_2, \ldots , m_n) = \{ s_1 m_1 + s_2 m_2 + \cdots + s_n m_n : s_1, s_2, \ldots, s_n \in S \}.$$

Note that if $I$ is an ideal of the semiring $S$ and $N$ is an $S$-subsemimodule of $M$, the set $IN = \{s_1 m_1 + s_2 m_2 + \cdots + s_n m_n: s_i \in S, m_i \in M, n \in \mathbb N\}$ is also an $S$-subsemimodule of $M$.

An $S$-subsemimodule $N$ of an $S$-semimodule $M$ is said to be subtractive if $m, m+n \in N$ for all $m,n \in M$ implies $n \in N$. We say that an $S$-semimodule $M$ is subtractive if every $S$-subsemimodule of $M$ is subtractive. One can easily check that if every $2$-generated $S$-subsemimodule of the $S$-semimodule $M$ is subtractive, then $M$ is a subtractive semimodule. Subtractive semimodules and semirings play a central role in this paper.

Though the above definitions are rather standard, we brought them here to close the window of possible ambiguities.

In the first section of the present paper, we prove that if $S$ is a semiring and $M$ is an $S$-semimodule, then $M$ is a subtractive $S$-semimodule if and only if for all $f \in S[X]$ and $g \in M[X]$, there exists an $m \in \mathbb N_0$ such that $$ c(f)^{m+1} c(g) = c(f)^m c(fg),$$ where $m \leq \deg(g)$ and by $c(g)$, we mean the $S$-subsemimodule of $M$ generated by coefficients of $g$. The case $M=S$ is of our special interest.

 While subtractive semirings include rings and bounded distributive lattices and is still a large and important class of semirings, we also show that there are many semirings that Dedekind-Mertens content formula does not hold for them. For example if we consider the idempotent semiring $S = \{ 0,1,u \}$, where $1+u = u+1 = u$, the polynomials $f=1+uX$ and $g=u+X$ of $S[X]$ do not satisfy Dedekind-Mertens content formula, i.e. $c(f)^{m+1} c(g) \not= c(fg)c(f)^m$ and $c(f)^{m+1} c(g) \not= c(fg)c(f)^m$ for all $m\in \mathbb N_0$.

Similar to Gaussian rings (cf. \cite{AC}), we define a semiring $S$ to be Gaussian, if for all $f,g \in S[X]$, we have $c(fg) = c(f)c(g)$. Section 2 of the present paper is devoted to Gaussian semirings. Especially in this section, we prove that every bounded distributive lattice is a Gaussian semiring.

Section 3 is devoted to weak Gaussian semirings, i.e. those semirings that the content formula $c(f)c(g) \subseteq \sqrt {c(fg)}$ holds for all $f,g \in S[X]$. Actually in this section, we prove that if $S$ is a semiring, then the following statements are equivalent:

\begin{enumerate}

\item $S$ is a weak Gaussian semiring,
\item $\sqrt I$ is subtractive for each ideal $I$ of the semiring $S$,
\item Each prime ideal $\textbf{p}$ of $S$ is subtractive.

\end{enumerate}

Later in section 3, we construct a semiring that is a local and a weak Gaussian semiring, i.e. $c(fg) \subseteq c(f)c(g) \subseteq \sqrt {c(fg)}$ for all $f,g \in S[X]$, but still not a subtractive semiring. Note that the radical of an ideal $I$ of a semiring $S$, denoted by $\sqrt I$, is defined as the set $\sqrt I = \{s\in S: \exists n \in \mathbb N (s^n \in I)\}$.

In section 4, we introduce the concept of content semimodules that is a generalization of the concept of content modules introduced in \cite{OR}. We also bring some routine generalizations of the theorems on content modules, though we do not go through them deeply. Actually it was possible to ask the reader to refer to the papers \cite{OR}, \cite{ES} and \cite{R} to see the process of configuring (weak) content algebras from content modules and model them to configure (weak) content semialgebras from content semimodules, but we thought it was a good idea to bring the sketch of the process for the convenience of the reader. Note that the reader who is familiar with content modules, may skip this section without losing the flow.

Let $S$ be a semiring and $M$ an $S$-semimodule. The \textit{content function}, $c$ from $M$ to the ideals of $S$ is defined by
$$ c_M(x) = \bigcap \lbrace I \colon I \text{~is an ideal of~} S \text{~and~} x \in IM \rbrace.$$

$M$ is called a \textit{content $S$-semimodule} if $x \in c_M(x)M $, for all $x \in M$.

 As a matter of fact, it is easy to see that the statement ``$M$ is a content $S$-semimodule'' is equivalent to the statement ``there exists a function $d: M \longrightarrow \Id(S)$ such that $x\in IM$ iff $d(x) \subseteq I$ for all ideals $I$ of $S$''. It is, then, easy to see that $c_M(x)$ is a finitely generated ideal of $S$ for all $x\in M$, if $M$ is a content $S$-semimodule. Therefore if $M$ is a content $S$-semimodule, then the function $c$ is from $M$ to $\FId(S)$, where by $\FId(S)$, we mean the set of finitely generated ideals of $S$.

 Section 5 is devoted to the concept of ``content semialgebras'' as a generalization of polynomials over semirings and applications of Dedekind-Mertens content formula. For the reader's convenience, first we bring the definition of semialgebras and then we explain what we mean by content semialgebras:

Let $S$ and $B$ be two semirings and $\lambda: S \longrightarrow B$ be a semiring homomorphism in this sense that $\lambda$ is a function from $S$ into $B$ with the following properties:

\begin{enumerate}

\item $\lambda (r+s) = \lambda (r) + \lambda (s)$ and $\lambda (rs) = \lambda (r) \lambda (s)$ for all $r,s \in S$;

\item $\lambda (0) = 0$ and $\lambda (1) = 1$.

\end{enumerate}

Then we define the scalar product of $s\in S$ and $f\in B$ as $s \cdot f = \lambda(s)f$. It is easy to check that with the mentioned scalar product, $B$ is an $S$-semimodule such that $s\cdot (fg) = (s \cdot f)g = f(s \cdot g)$ for all $s \in S$ and $f,g \in B$. In this case we say that $B$ is an $S$-semialgebra. Note that if $I$ is an ideal of $S$, then $\lambda (I)$ may not be an ideal of $B$. For this reason, the extension of $I$ in $B$ is defined as the ideal of $B$ generated by $\lambda (I)$ and is denoted by $IB$. One can easily check that $IB = \{ \Sigma_{i=1}^n a_i f_i: a_i \in I, f_i \in B, i\in \mathbb N \}$.

Let $B$ be an $S$-semialgebra. We say that $B$ is a \emph{content} $S$-semialgebra if the homomorphism $\lambda$ is injective (i.e. we can consider $S$ as a subsemiring of $B$) and there exists a function $c: B \longrightarrow \Id(S)$ such that the following conditions hold:

\begin{enumerate}
\item $f\in IB$ iff $c(f) \subseteq I$ for all ideals $I$ of $S$;
\item $c(sf) = sc(f)$ for all $s\in S$ and $f\in B$ and $c(1)= R$;
\item (Dedekind-Mertens content formula) For all $f,g \in B$ there exists an $m\in \mathbb N_0$ such that $c(f)^{m+1} c(g) = c(f)^m c(fg)$.
\end{enumerate}

In section 5, we discuss prime ideals of content semialgebras and we show that in content extensions, minimal primes extend to minimal primes. More precisely, if $B$ is a content $S$-semialgebra, then there is a correspondence between $\Min(S)$ and $\Min(B)$, with the function $ \phi : \Min(S) \longrightarrow \Min(B)$ defined by $\textbf{p} \longrightarrow \textbf{p}B$. As a corollary of this fact, we also show that $\Nil(B) = \Nil(S)B$, where by $\Nil(S)$ we mean the set of all nilpotent elements of the semiring $S$.

In section 6, we define weak content semialgebras and through the continuation of our investigation for prime ideals, we prove that if $S$ is a semiring and $\Lambda , \Delta$ are two index sets such that $\Lambda \cup \Delta \not= \emptyset$. Then the following statements are equivalent:

\begin{enumerate}

\item $c(fg) \subseteq c(f)c(g) \subseteq \sqrt {c(fg)}$, for all $f,g \in S[X_\Lambda , {X_\Delta}^{-1}]$,
\item $\sqrt I$ is subtractive for each ideal $I$ of the semiring $S$,
\item Each prime ideal $\textbf{p}$ of $S$ is subtractive.

\end{enumerate}

In the rest of this section, we work on formal power series over semirings and get a bunch of nice results for them. One of those results is that if $S$ is a Noetherian semiring (i.e. a semiring that each of its ideal is finitely generated), then the following statements are equivalent:

\begin{enumerate}

\item $S[[X_1,X_2,\ldots,X_n]]$ is a weak content $S$-semialgebra,
\item $\sqrt I$ is subtractive for each ideal $I$ of the semiring $S$,
\item Each prime ideal $P$ of $S$ is subtractive.

\end{enumerate}

Sections 7 and 8 are devoted to the investigation of zero-divisors of semiring polynomials and more generally content semialgebras. Most results of these two sections are generalizations of the author's results in the paper \cite{Na}. In fact we believe that the interesting results obtained in sections 1 till 6 are necessary backgrounds to justify what we execute in sections 7 and 8. Now in the following, we describe what we do in these two sections briefly:

An element $s\in S$ is said to be a zero-divisor of the semiring $S$, if there exists a nonzero $s^{\prime}$ such that $ss^{\prime} = 0$. The set of all zero-divisors of the semiring $S$ is denoted by $Z(S)$.

In section 7, we introduce a family of semirings which have very few zero-divisors. It is a well-known result that the set of zero-divisors of a Noetherian semiring is a finite union of its associated primes. Rings having very few zero-divisors have been investigated in \cite{Na}. We define that a semiring $S$ has \textit{very few zero-divisors}, if $Z(S)$ is a finite union of prime ideals in $\Ass(S)$. In this section, we prove that if $S$ is a semiring that has very few zero-divisors and $B$ is a content $S$-semialgebra, then $B$ has very few zero-divisors as well and if the semiring $S$ is weak Gaussian, then the inverse of the recent statement also holds.

Another celebrated property of Noetherian semirings is that every ideal entirely contained in the set of their zero-divisors has a nonzero annihilator. In section 8, we define a semiring $S$ to have \textit{Property (A)}, if each finitely generated ideal $I \subseteq Z(S)$ has a nonzero annihilator. This definition is borrowed from the definition of Property (A) for rings in \cite{HK}. In the last section of the present paper, we prove some results for content semialgebras over semirings having Property (A).

An element $r$ of a ring $R$ is said to be \textit{prime to an ideal} $I$ of $R$ if $I : (r) =I$, where by $I : (r)$, we mean the set of all elements $d$ of $R$ such that $dr \in I$ \cite[p. 223]{ZS}. Let $I$ be an ideal of $R$. We denote the set of all elements of $R$ that are not prime to $I$ by $S(I)$. It is obvious that $r\in S(I)$ iff $r+I$ is a zero-divisor of the quotient ring $R/I$. The ideal $I$ is said to be primal if $S(I)$ forms an ideal and in such a case, $S(I)$ is a prime ideal of $R$. A ring $R$ is said to be \textit{primal}, if the zero ideal of $R$ is primal \cite{Dau}. It is obvious that $R$ is primal iff $Z(R)$ is an ideal of $R$. This motivates us to define primal semirings. We define a semiring $S$ to be primal if $Z(S)$ is an ideal of $S$ and we prove that if $B$ is a content $S$-semialgebra and the content function $c: B \longrightarrow \FId(S)$ is onto, then $B$ is primal iff $S$ is primal and has Property (A). We finish our paper by generalizing this result in the following sense:

 We define a weak Gaussian semiring $S$ to have few zero-divisors, if the set $Z(S)$ of zero-divisors is a finite union of prime ideals. Suppose for the moment that $S$ is a weak Gaussian semiring such that it has few zero-divisors. One can consider $Z(S)= \cup _{i=1}^n \textbf{p}_i$ such that $\textbf{p}_i \nsubseteq \cup _{j=1 \wedge j \neq i}^n \textbf{p}_j$ for all $ 1\leq i \leq n$. Obviously we have $\textbf{p}_i \nsubseteq \textbf{p}_j$ for all $i \neq j$. Also, by using Prime Avoidance Theorem for semirings, it is easy to check that, if $Z(S)= \cup _{i=1}^n \textbf{p}_i$ and $Z(S)= \cup _{k=1}^m \textbf{q}_k$ such that $\textbf{p}_i \nsubseteq \textbf{p}_j$ for all $i \neq j$ and $\textbf{q}_k \nsubseteq \textbf{q}_l$ for all $k \neq l$, then $m=n$ and $\{\textbf{p}_1,\cdots,\textbf{p}_n\}=\{\textbf{q}_1,\cdots,\textbf{q}_n\}$, i.e. these prime ideals are uniquely determined. This is the base for the following definition: We say a weak Gaussian semiring $S$ has \textit{few zero-divisors of degree} $n$, if $S$ has few zero-divisors and $n$ is the number of maximal primes of $Z(S)$. In such a case, we write $\zd(S) = n$ and we prove that if $S$ is a weak Gaussian semiring and $B$ is a content $S$-semialgebra and the content function $c: B \longrightarrow \FId(S)$ is onto, then $\zd(B) = n$ iff $\zd(S) = n$ and $S$ has Property (A). A nice corollary of this theorem is as follows:

 Let $S$ be a subtractive semiring and $\Lambda, \Delta$ be two index sets such that either $\Lambda$ or $\Delta$ is nonempty. Then for all natural numbers $n$, $\zd(S[X_\Lambda , {X_\Delta}^{-1}]) = n$ iff $\zd(S)=n$ and $S$ has Property (A).

Throughout this paper all semirings are commutative with an identity and all semimodules are assumed to be unitary, as we have asserted this from the beginning. Note that \textit{iff} always stands for if and only if.

\section{Dedekind-Mertens Content Formula for Semimodules over Semirings}

Let $S$ be a semiring and $X$ be an indeterminate. The set of all polynomials over the semiring $S$, denoted by $S[X]$, is the set of all formal forms $a_0 + a_1 X + \cdots + a_n X^n$, where $a_0, a_1, \cdots, a_n \in S$. Similar to ring theory, $S[X]$ is a semiring under the usual addition and multiplication of polynomials. In the same way, if $M$ is an $S$-semimodule, one can consider $M[X]$ as an $S[X]$-semimodule under the standard addition and scalar product.

For $g \in M[X]$, we define the content of $g$, denoted by $c(g)$, the $S$-subsemimodule generated by the coefficients of $g$, particulary if $g$ is a monomial, say $g=bX^m$ where $b \in M$ and $m \in \mathbb N_0$, then the content of $g$ is the cyclic $S$-subsemimodule $c(g)=(b)$.

If $g=b_0 + b_1 X + \cdots + b_m X^m$, where $b_0, b_1, \ldots , b_m \in M$ and $b_m \not= 0$, then we say that $g$ is a polynomial of degree $m$ and denote the degree of $g$ by $\deg g = m$. A similar definition works for the degree of polynomials over semirings.

The purpose of this section is to give some content formulas for polynomials over semimodules and semirings. Later we will see the importance and applications of such formulas in the theory of semimodules and semirings.

First we gather some straightforward content formulas in the following proposition only for the sake of reference.

\begin{proposition}

\label{contentformulas}

If $S$ is a semiring and $M$ is an $S$-semimodule and $s\in S$, $m \in M$, $f,f_1,f_2 \in S[X]$ and $g,g_1,g_2 \in M[X]$, then the following formulas hold:

\begin{enumerate}

\item $c(f_1 + f_2) \subseteq c(f_1)+c(f_2)$ and $c(g_1 + g_2) \subseteq c(g_1)+c(g_2)$;
\item $c(f_1 f_2) \subseteq c(f_1) c(f_2)$ and $c(fg) \subseteq c(f)c(g)$;
\item If either $f$ or $g$ is monomial (say $f=sX^n$ or $g=mX^k$), then $c(fg) = c(f)c(g)$, particularly $c(sf) = sc(f)$ and $c(fm)=c(f)m$.

\end{enumerate}

\end{proposition}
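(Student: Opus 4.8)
The plan is to verify each of the three items by direct computation with polynomial coefficients, exactly as in the classical ring-theoretic case, being careful that the only tools available are the semiring/semimodule axioms (no subtraction). Throughout, write $f = \sum_{i} a_i X^i$, $f_1 = \sum_i a_i X^i$, $f_2 = \sum_i a_i' X^i$ in $S[X]$, and similarly $g = \sum_j b_j X^j$, $g_1, g_2$ in $M[X]$.

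First I would prove (1). The $i$-th coefficient of $f_1 + f_2$ is $a_i + a_i'$, which lies in $c(f_1) + c(f_2)$ since $a_i \in c(f_1)$ and $a_i' \in c(f_2)$. As $c(f_1 + f_2)$ is the smallest $S$-subsemimodule containing all these coefficients and $c(f_1) + c(f_2)$ is an $S$-subsemimodule containing them, the inclusion follows. The argument for $g_1 + g_2$ is identical, using that $M[X]$ is an $S[X]$-semimodule and that sums of subsemimodules are subsemimodules.

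Next, for (2), the $k$-th coefficient of $f_1 f_2$ is $\sum_{i+j=k} a_i a_j'$; each summand $a_i a_j'$ lies in $c(f_1) c(f_2)$ because $a_i \in c(f_1)$, $a_j' \in c(f_2)$, and $c(f_1)c(f_2)$ is by definition the $S$-subsemimodule generated by all such products (here I use the remark in the introduction that $I N$ is an $S$-subsemimodule when $I$ is an ideal and $N$ a subsemimodule, applied with $I = c(f_1)$, $N = c(f_2)$). Hence every coefficient of $f_1 f_2$ lies in $c(f_1)c(f_2)$, and since $c(f_1 f_2)$ is the smallest subsemimodule containing them we get $c(f_1 f_2) \subseteq c(f_1) c(f_2)$. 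The mixed case $c(fg) \subseteq c(f)c(g)$ for $f \in S[X]$, $g \in M[X]$ is word-for-word the same, with coefficients $\sum_{i+j=k} a_i b_j$ and using $s(m+n) = sm + sn$, $(st)m = s(tm)$ to see $c(f)c(g)$ is a subsemimodule of $M$ containing these.

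Finally, for (3), suppose $g = m X^k$ is a monomial. Then $fg = \sum_i (a_i m) X^{i+k}$, so the coefficients of $fg$ are exactly $\{a_i m : i\}$ together with $0$'s, and $c(fg) = (\{a_i m\}) = c(f)m$; I would check $c(f)m = c(f)c(g)$ directly since $c(g) = (m)$ and $(\sum_i s_i a_i)(s m) = \sum_i (s_i s) a_i m$ shows $c(f)(m) = (\{a_i m\})$ using the scalar identities. The case $f = s X^n$ monomial is symmetric, giving $c(sf) = sc(f)$ as the special instance $n = 0$, $g = f$. The reverse inclusion in (3) is what distinguishes it from (2): for it one genuinely uses that only one factor contributes a single nonzero coefficient, so no cancellation among cross terms can occur — and this is the only point where the absence of subtraction could in principle bite, so I would state the monomial computation explicitly rather than invoke the ring proof. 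None of the steps presents a serious obstacle; the mild care needed is simply to phrase every "smallest subsemimodule containing $\dots$" argument without implicitly using additive inverses.
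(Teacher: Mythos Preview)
Your proof is correct. The paper itself does not prove this proposition at all: it is introduced as a collection of ``straightforward content formulas'' gathered ``only for the sake of reference,'' and no argument is given. Your explicit coefficient computations fill in precisely the routine verifications the paper leaves to the reader, and you are right that the only place requiring any care is part (3), where the monomial hypothesis guarantees the reverse inclusion without any appeal to additive cancellation.
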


 Consider that while the formula $c(fg) = c(f)c(g)$ for $f,g \in S[X]$ is not always true for an arbitrary semiring S, a weaker formula always holds for commutative rings that is called \textit{Dedekind-Mertens content formula} \cite{AG}. We prove the same formula in the following theorem, which is a generalization of Theorem 1 in \cite{AG}. We use essentially the same argument and generalize Dedekind-Mertens content formula for subtractive semimodules and semirings.

\begin{theorem}

\label{dedekindmertens1}

 \textbf{Dedekind-Mertens Lemma for Semimodules}\textbf{.} Let $S$ be a semiring and $M$ be an $S$-semimodule. Then the following statements are equivalent:

 \begin{enumerate}

 \item $M$ is a subtractive $S$-semimodule,

 \item For all $f \in S[X]$ and $g \in M[X]$, if $\deg(g) = m$, then $c(f)^{m+1} c(g) = c(f)^m c(fg)$.

 \end{enumerate}

\begin{proof}

$(1) \rightarrow (2)$: By Proposition \ref{contentformulas}, $c(fg) \subseteq c(f)c(g)$, therefore $c(f)^m c(g) \subseteq c(f)^{m+1} c(g)$. We prove that $c(f)^{m+1} c(g) \subseteq c(f)^m c(fg)$ by induction on $m$ and $n$, the degrees of $g$ and $f$ respectively.

If $f$ is a monomial; say $f=a_n X^n$ and $g=b_0 + b_1 X + \cdots + b_m X^m$, then from Proposition 1, we get $c(f)^m c(fg) = c(f)^{m+1} c(g)$. For the same reason, if $g$ is a monomial, say $g= b_m X^m$ and $f= a_0 + a_1 X + \cdots + a_n X^n$, then again by Proposition 1, we have the same result: $c(f)^m c(fg) = c(f)^{m+1} c(g)$. This means that if either $f$ or $g$ is a monomial and as a particular case, if either $f$ or $g$ has degree zero, then the theorem is true.

By induction we may suppose the following statements:

\begin{enumerate}

\item[(I)] If $\deg f =n < r$ and $\deg g = m$, then $c(f)^{m+1} c(g) \subseteq c(f)^m c(fg)$.
\item[(II)] If $\deg f =r$ and $\deg g = m < s$, then $c(f)^{m+1} c(g) \subseteq c(f)^m c(fg)$.

\end{enumerate}

Now let $f= a_0 + a_1 X + \cdots + a_r X^r$, $a_r \not= 0$, $g=b_0 + b_1 X + \cdots + b_s X^s$, $b_s \not= 0 $, and assume that
neither $f$ nor $g$ is a monomial. Then we wish to prove that $c(f)^{s+1} c(g) \subseteq c(f)^s c(fg)$.

Let $f_1=a_0 + a_1 X + \cdots + a_{r-1}X^{r-1}$, $g_1=b_0 + b_1 X + \cdots + b_{s-1}X^{s-1}$, $h=fg$, $h_1=f_1 g$ and $h_2=f g_1$. It is, then, clear that we have the following:

\begin{enumerate}

\item $h = \sum_{k=0}^{r+s} c_k X^k$, where $c_k = \sum_{i+j=k} a_i b_j$, for $0 \leq k \leq r+s$;

\item $h_1 = \sum_{k=0}^{r+s-1} {c_k}^{(1)} X^k$, where ${c_k}^{(1)}= c_k$ for $0 \leq k \leq r-1$ and ${c_k}^{(1)} + a_r b_{k-r} = c_k$ for $r \leq k \leq r+s-1$.

\item $h_2 = \sum_{k=0}^{r+s-1} {c_k}^{(2)} X^k$, where ${c_k}^{(2)} = c_k$ for $0 \leq k \leq s-1$ and ${c_k}^{(2)} + a_{k-s} b_s = c_k$ for $s \leq k \leq r+s-1$.

\end{enumerate}

We claim that $c(f_1 g) \subseteq c(fg)+a_r c(g_1)$. The proof is as follows:

Obviously $c(f_1 g) = c(h_1) = ({c_0}^{(1)}, \ldots, {c_{r+s-1}}^{(1)})$ and $ c(fg)+a_r c(g_1) = (c_0, \ldots , c_{r+s})+(a_r)(b_0, \cdots , b_{s-1})$ and in order to prove that $c(f_1 g) \subseteq c(fg)+a_r c(g_1)$, we just need to prove that ${c_k}^{(1)} \in (c_0, \ldots , c_{r+s})+(a_r)(b_0, \ldots , b_{s-1})$ for all $0 \leq k \leq r+s-1$.

If $0 \leq k \leq r-1$, then ${c_k}^{(1)}= c_k$ and therefore there is nothing to prove. Let for the moment $r \leq k \leq r+s-1$, so it is clear that $c_k, a_r b_{k-r} \in c(fg)+a_r c(g_1)$. On the other hand $c(fg)+a_r c(g_1)$ is a subtractive $S$-subsemimodule of $M$. Hence, from ${c_k}^{(1)} + a_r b_{k-r} = c_k$, we get that ${c_k}^{(1)} \in c(fg)+a_r c(g_1)$. In a similar way, we can easily prove that $c(fg_1) \subseteq c(fg)+ c(f_1) b_s$.

Since $c(f)^{s+1} c(g)$ is generated by elements of the form $\alpha = {a_0}^{n_0} {a_1}^{n_1} \cdots {a_r}^{n_r} b_i$, where $\sum_{j=0}^r n_j = s+1$ and $0 \leq i \leq s$, it suffices to show that each element of this form is contained in $c(f)^s c(fg)$. If $n_r \not= 0$ and $i=s$, then $\alpha = {a_0}^{n_0} {a_1}^{n_1} \cdots {a_r}^{n_r -1} c_{r+s}$, since $c_{r+s}=a_r b_s$. But $c_{r+s} \in c(fg)$, so that $\alpha \in c(f)^n c(fg)$. If $n_r \not= 0$ and $i<s$, then $\alpha \in c(f)^s (a_r)c(g_1)$. In the case $n_r=0$, we have $\alpha \in c(f_1)^{s+1} c(g)$. Therefore,
$$c(f)^{s+1}c(g) \subseteq c(f)^s c(g) + c(f_1)^{s+1} c(g) + c(f)^s (a_r)c(g_1).$$

By (I), $c(f_1)^{s+1} c(g) \subseteq c(f_1)^s c(f_1 g)$. $$$$ On the other hand we have seen that $c(f_1 g) \subseteq c(fg)+ a_r c(g_1)$. Consequently, we have
$$ c(f_1)^{s+1} c(g) \subseteq c(f_1)^s c(fg) + c(f_1)^s (a_r) c(g_1) \subseteq c(f)^s c(fg) + c(f)^s (a_r) c(g_1). $$

Thus, $c(f)^{s+1}c(g) \subseteq c(f)^s c(g) + c(f)^s (a_r)c(g_1)$.
$$$$
By (II), if $\lambda = \deg g_1$, then $c(f) ^{\lambda +1}c(g_1) \subseteq c(f)^{\lambda} c(fg_1)$. Since $\lambda \leq s-1$, we have $ c(f)^s c(g_1) \subseteq c(f)^{s-1} c(fg_1)$. But we have also seen that $c(fg_1) \subseteq c(fg)+ c(f_1) b_s$ so that
$$ c(f)^s (a_r) c(g_1) \subseteq c(f)^{s-1}(a_r)c(fg) + c(f)^{s-1} c(f_1) (a_r)(b_s) $$
$$\subseteq c(f)^s c(fg) + c(f)^{s-1} c(f_1) (c_{r+s}) \subseteq c(f)^s c(fg).$$

Therefore, $c(f)^{s+1} c(g) \subseteq c(f)^s c(fg)$, as we wished to prove. It now follows by induction that if $\deg g = m$, then $c(f)^{m+1} c(g) \subseteq c(f)^m c(fg)$. Hence, $c(f)^{m+1} c(g) = c(f)^m c(fg)$ and the proof of $(1) \rightarrow (2)$ is complete.

$(2) \rightarrow (1)$: Let $N$ be an $S$-subsemimodule of $M$. Take $u,v \in M$ such that $u+v, u \in N$. Define $f=1+X$ and $g=u+vX+uX^2$. It is easy to see that $fg=u+(u+v)X+(u+v)X^2+uX^3$, $c(f)=S$, $c(g)=(u,v)$ and $c(fg)=(u+v,u)$. But according to our assumption, Dedekind-Mertens content formula holds and therefore there exists an $m\in \mathbb N_0$ such that $c(f)^{m+1} c(g) = c(f)^m c(fg)$. This means that $(u,v) = (u+v,u)$, which implies $v\in (u+v,u) \subseteq N$ and $N$ is subtractive.
\end{proof}

\end{theorem}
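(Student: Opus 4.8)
The plan is to prove the equivalence of (1) and (2) in Theorem \ref{dedekindmertens1} by adapting the classical Dedekind--Mertens argument of Arnold--Gilmer, with the single new observation that the inductive step requires ambient subsemimodules of the form $c(fg) + a_r c(g_1)$ to be subtractive — which is precisely what hypothesis (1) supplies, and which is what hypothesis (2) forces in the converse. Throughout, I would lean on Proposition \ref{contentformulas}, especially the facts $c(fg)\subseteq c(f)c(g)$ and $c(f h) = c(f) h$ when $h\in M$, and the remark from the introduction that it suffices to check subtractivity on $2$-generated subsemimodules.

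For the direction $(1)\Rightarrow(2)$, I would first dispose of the easy inclusion $c(f)^m c(g)\subseteq c(f)^{m+1}c(g)$ and reduce everything to proving $c(f)^{m+1}c(g)\subseteq c(f)^m c(fg)$ by a double induction on $r=\deg f$ and $s=\deg g$. The base cases are when $f$ or $g$ is a monomial, handled directly by part (3) of Proposition \ref{contentformulas}. In the inductive step, with $f_1, g_1$ the truncations of $f, g$, I would establish the two key containments $c(f_1 g)\subseteq c(fg) + a_r c(g_1)$ and $c(f g_1)\subseteq c(fg) + c(f_1) b_s$; here is exactly where subtractivity enters, since the defining relations ${c_k}^{(1)} + a_r b_{k-r} = c_k$ express each coefficient of $f_1 g$ as a ``difference'' that only lands in the ambient subsemimodule once we know that subsemimodule is subtractive. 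From there, decomposing a generator $\alpha = a_0^{n_0}\cdots a_r^{n_r} b_i$ of $c(f)^{s+1}c(g)$ according to whether $n_r = 0$ and whether $i = s$, I would obtain $c(f)^{s+1}c(g)\subseteq c(f)^s c(g) + c(f_1)^{s+1}c(g) + c(f)^s(a_r)c(g_1)$, then apply induction hypothesis (I) to the middle term and induction hypothesis (II) to $c(f)^s c(g_1)$, using the two containments above to reabsorb everything into $c(f)^s c(fg)$.

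For the converse $(2)\Rightarrow(1)$, I would take an arbitrary subsemimodule $N\subseteq M$ and elements $u, v\in M$ with $u, u+v\in N$, and exhibit a single pair of polynomials witnessing that $v\in N$. The natural choice is $f = 1 + X$ (so $c(f) = S$) and $g = v + uX + vX^2$, whose product is $fg = v + (u+v)X + (u+v)X^2 + vX^3$, giving $c(g) = (u,v)$ and $c(fg) = (u+v, u)$. Since $c(f) = S$, the Dedekind--Mertens equality collapses to $(u,v) = (u+v,u)$, and then $v\in (u+v, u)\subseteq N$ because $u, u+v\in N$.

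The main obstacle I anticipate is bookkeeping in the inductive step rather than any conceptual difficulty: one must be careful that the induction hypotheses (I) and (II) are set up with the right quantifiers (I on $\deg f < r$ for all $\deg g$, II on $\deg f = r$ and $\deg g < s$), that the degree $\lambda = \deg g_1$ may be strictly less than $s-1$ so one only gets $c(f)^s c(g_1)\subseteq c(f)^{s-1}c(fg_1)$ after multiplying the hypothesis up by a power of $c(f)$, and that the term $c(f)^{s-1}c(f_1)(a_r)(b_s)$ is reabsorbed using $a_r b_s = c_{r+s}\in c(fg)$. Verifying the two containments $c(f_1 g)\subseteq c(fg)+a_r c(g_1)$ and $c(fg_1)\subseteq c(fg)+c(f_1)b_s$ is the one place where the hypothesis that $M$ is subtractive is genuinely used, so I would state that step carefully.
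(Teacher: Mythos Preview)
Your proposal is correct and follows essentially the same route as the paper's proof: the same double induction on $\deg f$ and $\deg g$ adapted from Arnold--Gilmer, the same truncations $f_1,g_1$, the same two key containments $c(f_1 g)\subseteq c(fg)+a_r c(g_1)$ and $c(fg_1)\subseteq c(fg)+c(f_1)b_s$ established via subtractivity, the same three-way decomposition of the generators $\alpha$, and the identical choice $f=1+X$, $g=v+uX+vX^2$ for the converse. One small slip: your ``easy inclusion'' should read $c(f)^m c(fg)\subseteq c(f)^{m+1}c(g)$ rather than $c(f)^m c(g)\subseteq c(f)^{m+1}c(g)$ (the paper contains the same typo).
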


In the above theorem if we suppose $M=S$, we get the following:

\begin{theorem}

\label{dedekindmertens2}

 \textbf{Dedekind-Mertens Lemma for Semirings}\textbf{.} Let $S$ be a semiring. Then the following statements are equivalent:

 \begin{enumerate}

 \item $S$ is a subtractive semiring,
 \item If $f, g \in S[X]$ and $\deg(g) = m$, then $c(f)^{m+1} c(g) = c(f)^m c(fg)$.

 \end{enumerate}

\end{theorem}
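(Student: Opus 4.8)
The plan is to obtain Theorem \ref{dedekindmertens2} as the special case $M = S$ of Theorem \ref{dedekindmertens1}, which has already been established. First I would observe that every semiring $S$ is automatically an $S$-semimodule via its own multiplication, and that under this identification an $S$-subsemimodule of $S$ is precisely an ideal of $S$, while the polynomial semimodule $S[X]$ over $S$ coincides with the polynomial semiring $S[X]$, and the content $c(g)$ of $g \in S[X]$ as a semimodule element agrees with the content of $g$ as a polynomial. Hence the phrase ``$S$ is a subtractive semiring'' (every ideal of $S$ is subtractive) is literally the statement ``$S$ is a subtractive $S$-semimodule,'' and ``for all $f \in S[X]$ and $g \in S[X]$ with $\deg g = m$ we have $c(f)^{m+1}c(g) = c(f)^m c(fg)$'' is exactly condition (2) of Theorem \ref{dedekindmertens1} specialized to $M = S$.

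With these identifications in place, the proof is a one-line invocation: apply Theorem \ref{dedekindmertens1} with $M = S$. I would write something like ``Put $M = S$ in Theorem \ref{dedekindmertens1}; since the $S$-subsemimodules of $S$ are exactly its ideals, $S$ is a subtractive $S$-semimodule if and only if $S$ is a subtractive semiring, and the content of a polynomial in $S[X]$ regarded as an element of the semimodule $S[X]$ is the content of that polynomial regarded as an element of the semiring $S[X]$. The equivalence now follows.''

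Since this is purely a matter of matching definitions, there is no real obstacle to overcome. The only point that deserves an explicit (brief) sentence is the compatibility of the two notions of ``content'': the content of $g$ as defined for semimodule polynomials is the $S$-subsemimodule of $S$ generated by the coefficients of $g$, and this is by definition the ideal of $S$ generated by those coefficients, i.e.\ the usual polynomial content. Likewise one should note in passing that subtractivity of a subsemimodule $N \subseteq M$ and subtractivity of an ideal $I \subseteq S$ are the same condition when $M = S$ and $N = I$. Once these trivial translations are recorded, the theorem is immediate and no induction or computation is needed — all the work was already done in Theorem \ref{dedekindmertens1}.
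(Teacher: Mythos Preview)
Your proposal is correct and matches the paper's own approach exactly: the paper simply remarks ``In the above theorem if we suppose $M=S$, we get the following'' and states Theorem~\ref{dedekindmertens2} without further argument. Your explicit verification that subsemimodules of $S$ are ideals and that the two notions of content coincide is a welcome elaboration, but no additional idea is needed.
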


Our next task is to prove Dedekind-Mertens content formula for polynomials with finitely many indeterminates. To do this, first we introduce the concept of the support of a polynomial. Let $f$ be a polynomial, the support of $f$, denoted by $\supp (f)$ is the set of all nonzero coefficients of $f$. Now we show a nice result that is a generalization of Lemma 2 in \cite{AG}. We use a similar technique that is given in the proof of the Lemma 2 in \cite{AG}.

\begin{lemma}

\label{supppoly}

Let $M$ be an $S$-semimodule and $X_1, \ldots , X_n, Y$ be $n+1$ distinct indeterminates over the semiring $S$ and the semimodule $M$ and $f \in S[X_1, \ldots , X_n, Y]-\{0\}$ and $g \in M[X_1, \ldots , X_n, Y]-\{0\}$ be polynomials. Then there exist $f^* \in S[X_1, \ldots , X_n]-\{0\}$ and $g^* \in M[X_1, \ldots , X_n]-\{0\}$ such that $\supp (f) = \supp (f^*)$, $\supp (g) = \supp (g^*)$ and $\supp (fg) = \supp (f^* g^*)$.
\end{lemma}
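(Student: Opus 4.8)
The plan is to use a Kronecker-style substitution that folds the indeterminate $Y$ into a sufficiently high power of $X_1$, arranged so that the substitution is injective on the exponent vectors that actually occur; then it simply shuffles the nonzero coefficients around bijectively, which is all that is needed to preserve supports and the support of the product.

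First I would fix $N\in\mathbb{N}$ with $N>\deg_{X_1}(f)+\deg_{X_1}(g)$, where $\deg_{X_1}(\cdot)$ denotes the degree in the variable $X_1$; since degree is subadditive under multiplication even over a semiring, this $N$ exceeds the $X_1$-degree of each of $f$, $g$ and $fg$. Next I would introduce the evaluation semiring homomorphism $\psi\colon S[X_1,\dots,X_n,Y]\longrightarrow S[X_1,\dots,X_n]$ determined by $X_i\mapsto X_i$ and $Y\mapsto X_1^{N}$, and extend it coefficientwise so as to obtain a compatible map $M[X_1,\dots,X_n,Y]\longrightarrow M[X_1,\dots,X_n]$ with $\psi(ab)=\psi(a)\psi(b)$ for $a\in S[X_1,\dots,X_n,Y]$ and $b\in M[X_1,\dots,X_n,Y]$. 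Then I would set $f^{*}=\psi(f)$ and $g^{*}=\psi(g)$, so that $f^{*}g^{*}=\psi(fg)$ as an equality of polynomials.

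The heart of the argument is that $\psi$ merely relabels the monomials that occur. A monomial $X_1^{e_1}\cdots X_n^{e_n}Y^{d}$ is sent to $X_1^{e_1+Nd}X_2^{e_2}\cdots X_n^{e_n}$, and whenever $e_1<N$ the integers $e_1$ and $d$ can be recovered from $e_1+Nd$ by division with remainder by $N$; hence on any polynomial all of whose monomials have $X_1$-degree $<N$ the induced map on exponent vectors is injective. Applying this to $f$, to $g$ and to $fg$ shows that $\psi$ sends distinct monomials of each to distinct monomials, with no coalescing of terms, so the set of nonzero coefficients is unchanged: $\supp(f^{*})=\supp(f)$, $\supp(g^{*})=\supp(g)$, and, using $f^{*}g^{*}=\psi(fg)$, $\supp(f^{*}g^{*})=\supp(fg)$. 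In particular $f^{*}$ and $g^{*}$ are nonzero because $f$ and $g$ are, which is the assertion.

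I do not anticipate a genuine obstacle; the only point that actually uses the semiring setting rather than being pure bookkeeping is that one must prevent two distinct monomials from being identified by the substitution, since over a semiring that would replace two coefficients by their sum and could change the set $\supp$. Taking $N$ beyond the $X_1$-degree of the product $fg$ is precisely what rules this out. (If one wanted a statement symmetric in $X_1,\dots,X_n$, one could substitute $Y\mapsto(X_1\cdots X_n)^{N}$ instead, but the one-variable substitution already suffices.)
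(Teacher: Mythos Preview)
Your argument is correct and is essentially identical to the paper's: both perform a Kronecker substitution $Y\mapsto X_k^{N}$ with $N$ chosen larger than the $X_k$-degrees of $f$, $g$ and $fg$ (the paper uses $X_n$ where you use $X_1$, and takes $N=\deg_{X_n}f+\deg_{X_n}g+1$), then observe that distinct monomials cannot collide, so the supports are preserved and $(fg)^{*}=f^{*}g^{*}$.
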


\begin{proof}

Let $h(X_1, \ldots , X_n, Y)$ be a nonzero polynomial of $n+1$ indeterminates. We write $h$ as a polynomial in $Y$ with coefficients $h_0, \ldots, h_k$ to be polynomials of $n$ indeterminates $X_1, \ldots, X_n$, i.e. $h = \sum_{i=0}^k h_i Y^i$, and we denote the degree of $h$ in $X_n$ by $\deg_n h$, which is equal to the maximum of the degrees of the $h_i$'s in $X_n$. We observe that if $m > \deg_n h$, then the coefficients of $h^*(X_1, \ldots, X_n, {X_n}^m) = \sum_{i=0}^k h_i X_{n}^{mi}$ are the same as the coefficients of $h$, because if $0 \leq i \leq k$ and $h_i \not= 0$, then $\deg_n h_i < m $ so that $mi \leq \deg_n \beta < m(i+1)$ for any nonzero monomial $\beta$ of $h_i {X_n}^{mi}$. Therefore, the nonzero monomials appearing in $h_i {X_n}^{mi}$ are distinct from those appearing in $h_j {X_n}^{mj}$ for $i \not= j$. It follows that $h$ and $h^*$ have the same support.

We choose $m = deg_n f + deg_n g + 1$. Then $m > \deg_n f$, $m > \deg_n g$ and $m > \deg_n fg$. It is easy to see that $(fg)^* = f^* g^*$ where by $l^*$, we mean the mapping $$*: h(X_1, \ldots , X_n, Y) \longrightarrow h^*=h(X_1, \ldots , X_n, {X_n}^m)$$ from $M[X_1, \ldots , X_n, Y]$ ($S[X_1, \ldots , X_n, Y]$) onto $M[X_1, \ldots , X_n]$ ($S[X_1, \ldots , X_n]$). From all we said we conclude that $\supp (f) = \supp (f^*)$, $\supp (g) = \supp (g^*)$ and $\supp (fg) = \supp (f^* g^*)$ and the proof is complete.
\end{proof}

Now we bring a generalization of Dedekind-Mertens content formulas for polynomials with finitely many indeterminates. Note that if $M$ is an $S$-semimodule and $g \in M[X_1, X_2, \ldots , X_n]$ is a polynomial with $n$ indeterminates, then the content of $g$, denoted by $c(g)$, is the $S$-subsemimodule of $M$, generated by its coefficients.

\begin{theorem}

\label{dedekindmertens3}

Let $S$ be a semiring and $M$ be an $S$-semimodule. Then $M$ is a subtractive $S$-semimodule iff for all $f \in S[X_1, \ldots, X_n]$ and $g \in M[X_1, \ldots, X_n]$, there exists an $m \in \mathbb N_0$ such that $c(f)^{m+1} c(g) = c(f)^m c(fg)$.

\end{theorem}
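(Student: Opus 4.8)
The plan is to reduce the $n$-indeterminate statement to the one-indeterminate case already established in Theorem \ref{dedekindmertens1}, using the support-collapsing trick of Lemma \ref{supppoly} as the engine for the induction on $n$. The base case $n=1$ is exactly Theorem \ref{dedekindmertens1}, and the reverse implication ``formula $\Rightarrow$ subtractive'' is immediate: simply specialize the given formula to polynomials in one of the $X_i$ (viewing them as elements of $S[X_i]$ and $M[X_i]$ by setting the other indeterminates to whatever is convenient, or just reusing the witness polynomials $f=1+X_1$ and $g=v+uX_1+vX_1^2$ from the proof of Theorem \ref{dedekindmertens1}), so that the content identity forces every $S$-subsemimodule $N$ of $M$ to be subtractive.

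For the forward implication I would argue by induction on $n$, assuming $M$ is subtractive. Given $f\in S[X_1,\dots,X_n]$ and $g\in M[X_1,\dots,X_n]$, relabel so that $Y:=X_n$ plays the distinguished role and apply Lemma \ref{supppoly} (with $n-1$ indeterminates $X_1,\dots,X_{n-1}$ plus $Y$) to produce $f^*\in S[X_1,\dots,X_{n-1}]$ and $g^*\in M[X_1,\dots,X_{n-1}]$ with $\supp(f)=\supp(f^*)$, $\supp(g)=\supp(g^*)$, and $\supp(fg)=\supp(f^*g^*)$. The crucial observation is that the content of a polynomial depends only on its support: $c(f)$ is generated by the nonzero coefficients of $f$, so $\supp(f)=\supp(f^*)$ gives $c(f)=c(f^*)$, and likewise $c(g)=c(g^*)$ and $c(fg)=c((fg)^*)=c(f^*g^*)$, the last equality because Lemma \ref{supppoly} guarantees $(fg)^*=f^*g^*$. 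By the inductive hypothesis applied to $f^*$ and $g^*$ over $S[X_1,\dots,X_{n-1}]$, there is an $m\in\mathbb N_0$ with $c(f^*)^{m+1}c(g^*)=c(f^*)^m c(f^*g^*)$; translating back through the content identifications yields $c(f)^{m+1}c(g)=c(f)^m c(fg)$, completing the induction.

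The main obstacle, such as it is, is bookkeeping rather than conceptual: one must be careful that Lemma \ref{supppoly} as stated collapses the single extra indeterminate $Y$ into $X_n^m$, so to run the induction cleanly one applies it with the roles ``$X_1,\dots,X_{n-1}$ and $Y=X_n$'', reducing $n$ indeterminates to $n-1$; iterating this $n-1$ times brings us down to a single indeterminate. One should also note that the lemma is stated for nonzero $f,g$, so the degenerate cases $f=0$ or $g=0$ (where every content in sight is the zero subsemimodule and the formula holds trivially with $m=0$) must be dispatched separately at the outset. No genuinely new idea beyond Theorem \ref{dedekindmertens1} and Lemma \ref{supppoly} is needed, which is presumably why the authors state it as a quick corollary.
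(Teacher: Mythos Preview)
Your proposal is correct and matches the paper's proof essentially line for line: induction on $n$ with base case Theorem~\ref{dedekindmertens1}, inductive step via Lemma~\ref{supppoly} to pass from $n$ indeterminates to $n-1$ (using that equal supports yield equal contents), and the reverse implication handled exactly as in the $(2)\Rightarrow(1)$ part of Theorem~\ref{dedekindmertens1} via the witness polynomials $f=1+X_1$, $g=v+uX_1+vX_1^2$. Your explicit handling of the degenerate cases $f=0$ or $g=0$ is a nice touch that the paper covers only with the phrase ``without loss of generality.''
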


\begin{proof}
$(\rightarrow)$: The proof is by induction. For $n=1$, the result follows from Theorem \ref{dedekindmertens1}. If the result is true for $n=k$ and $f\in S[X_1, \ldots, X_k, X_{k+1}]$ and $g\in M[X_1, \ldots, X_k, X_{k+1}]$, then without loss of generality, we may assume that both $f$ and $g$ are nonzero polynomials and Lemma \ref{supppoly} implies the existence of $f^* \in S[X_1, \ldots, X_k]$ and $g^* \in M[X_1, \ldots, X_k]$ such that $c(f) = c(f^*)$,  $c(g) = c(g^*)$ and $c(fg)=c(f^* g^*)$. From the induction assumption, there exists an $m \in \mathbb N_0$ such that $c(f^*)^{m+1} c(g^*) = c(f^*)^m c(f^* g^*)$ and the proof is complete.

The proof of $(\leftarrow)$ is similar to the proof of $(2) \rightarrow (1)$ in Theorem \ref{dedekindmertens1}.
\end{proof}

Let for the moment $\Lambda$ and $\Delta$ be index sets. Put $M[X_\Lambda , {X_\Delta}^{-1}]$ to be the union of all Laurent polynomials $M[{X_\lambda}_1, \ldots, {X_\lambda}_n, {{X_\delta}_1}^{-1}, \ldots, {{X_\delta}_m}^{-1}]$, where ${\lambda}_1, \ldots, {\lambda}_n \in \Lambda$, ${\delta}_1, \ldots, {\delta}_m \in \Delta$ and $n,m \in \mathbb N$. We have the following:

\begin{theorem}

\label{dedekindmertens4}

 Let $S$ be a semiring, $M$ be an $S$-semimodule and $\Lambda , \Delta$ be two index sets such that $\Lambda \cup \Delta \not= \emptyset$. Then $M$ is a subtractive $S$-semimodule iff for all $f \in S[X_\Lambda , {X_\Delta}^{-1}]$ and $g \in M[X_\Lambda , {X_\Delta}^{-1}]$, there exists an $m \in \mathbb N_0$ such that $c(f)^{m+1} c(g) = c(f)^m c(fg)$.

\end{theorem}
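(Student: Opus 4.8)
The plan is to bootstrap from the finite-indeterminate result, Theorem~\ref{dedekindmertens3}, using two observations: any single Laurent polynomial involves only finitely many of the indeterminates, and negative exponents can be cleared by multiplying through by an invertible monomial without changing any content. The converse half will simply reuse the computation in the proof of $(2)\rightarrow(1)$ of Theorem~\ref{dedekindmertens1}.

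For the implication from subtractivity to the content formula, let $f\in S[X_\Lambda,{X_\Delta}^{-1}]$ and $g\in M[X_\Lambda,{X_\Delta}^{-1}]$. Because these objects are, by definition, unions of Laurent polynomial semirings and semimodules in finitely many indeterminates, there are finitely many indices $\lambda_1,\dots,\lambda_n\in\Lambda$ and $\delta_1,\dots,\delta_p\in\Delta$ such that $f$ and $g$ involve no indeterminate outside $X_{\lambda_1},\dots,X_{\lambda_n},{X_{\delta_1}}^{-1},\dots,{X_{\delta_p}}^{-1}$. Put $D=X_{\delta_1}\cdots X_{\delta_p}$. For $t\in\mathbb N$ large enough, the elements $D^t f$ and $D^t g$ have no negative exponents and hence lie in the honest polynomial objects $S[X_{\lambda_1},\dots,X_{\lambda_n},X_{\delta_1},\dots,X_{\delta_p}]$ and $M[X_{\lambda_1},\dots,X_{\lambda_n},X_{\delta_1},\dots,X_{\delta_p}]$, and $D^{2t}(fg)=(D^t f)(D^t g)$ lies in the polynomial semimodule too. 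Since multiplying by the monic monomial $D^t$, respectively $D^{2t}$, only relabels the monomials carrying the coefficients and leaves the family of coefficients itself untouched, we have $c(D^t f)=c(f)$, $c(D^t g)=c(g)$ and $c\bigl((D^t f)(D^t g)\bigr)=c\bigl(D^{2t}(fg)\bigr)=c(fg)$. Applying Theorem~\ref{dedekindmertens3} to $D^t f$ and $D^t g$ gives an $m\in\mathbb N_0$ with $c(D^t f)^{m+1}c(D^t g)=c(D^t f)^m c\bigl((D^t f)(D^t g)\bigr)$, and substituting the three identities just obtained yields $c(f)^{m+1}c(g)=c(f)^m c(fg)$, as wanted.

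For the converse, suppose the content formula holds for all $f\in S[X_\Lambda,{X_\Delta}^{-1}]$ and $g\in M[X_\Lambda,{X_\Delta}^{-1}]$. Since $\Lambda\cup\Delta\neq\emptyset$, we may fix a single indeterminate $Y$ that belongs to $S[X_\Lambda,{X_\Delta}^{-1}]$ and $M[X_\Lambda,{X_\Delta}^{-1}]$, namely $Y=X_\lambda$ for some $\lambda\in\Lambda$ if $\Lambda\neq\emptyset$ and $Y={X_\delta}^{-1}$ for some $\delta\in\Delta$ otherwise. With this $Y$ in place of $X$, the construction and the computation in the proof of $(2)\rightarrow(1)$ of Theorem~\ref{dedekindmertens1}, which never invoke $Y^{-1}$, go through unchanged and show that every $S$-subsemimodule of $M$ is subtractive.

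The only step that deserves a moment's care is the invariance of the content under $f\mapsto D^t f$: one has to observe that $D^t$ is a unit of $S[X_\Lambda,{X_\Delta}^{-1}]$ and that multiplication by a monic monomial is a support- and coefficient-preserving operation, so it changes none of $c(f)$, $c(g)$, $c(fg)$; this is just the Laurent-monomial version of Proposition~\ref{contentformulas}(3). Once that is granted, the theorem is an immediate combination of Theorem~\ref{dedekindmertens3} with the converse half of Theorem~\ref{dedekindmertens1}, and I do not anticipate any further obstacle.
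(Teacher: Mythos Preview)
Your argument is correct and is precisely the reduction the paper has in mind: the paper states Theorem~\ref{dedekindmertens4} without proof immediately after defining $M[X_\Lambda,{X_\Delta}^{-1}]$ as a union of finite-variable Laurent polynomial semimodules, so the intended argument is to pass to finitely many indeterminates and invoke Theorem~\ref{dedekindmertens3}, with the converse recycling the $(2)\rightarrow(1)$ computation of Theorem~\ref{dedekindmertens1}. Your clearing of negative exponents by the monic monomial $D^t$ is the one detail the paper glosses over, and it is handled correctly via Proposition~\ref{contentformulas}(3).
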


According to Dedekind-Mertens content formula, it is natural to ask when $c(fg) = c(f)c(g)$ holds for all $f,g\in S[X]$, where $S$ is a semiring. The next section is devoted to this question.

\section{Gaussian Semirings}

Let, for the moment, $R$ be a commutative ring with identity. By definition, the ring $R$ is said to be Gaussian, if $c(fg)=c(f)c(g)$, for all $f,g \in R[X]$ (\cite{AC}). Similarly we define Gaussian semirings:

\begin{definition}

A semiring $S$ is called Gaussian if $c(fg)=c(f)c(g)$ for all polynomials $f,g \in S[X]$.

\end{definition}

Note that by Theorem \ref{dedekindmertens2}, a Gaussian semiring $S$ needs to be subtractive. In this section, we give some of conditions that cause a subtractive semiring to be Gaussian. A well-known theorem in commutative ring theory states that if every finitely generated ideal of a ring $R$ is principal, then the ring $R$ is Gaussian (\cite{AC}). In the following we show that a semiring $S$ is Gaussian if every finitely generated ideal of the semiring $S$ is principal generated by the sum of its generators. One of the interesting corollaries of this theorem is that every bounded distributive lattice is a Gaussian semiring.

\begin{theorem}

\label{GaussianSemiring}

 Let $S$ be a semiring such that $(a_1, a_2, \ldots, a_n) = (a_1+a_2+\cdots +a_n)$ for all elements $a_1, a_2, \ldots, a_n \in S$. Then $S$ is a Gaussian semiring.

\begin{proof}
Consider the polynomials $f,g \in S[X]$ such that $f=a_0+a_1 X+ \cdots + a_n X^n$ and $g=b_0+b_1 X+ \cdots + b_m X^m$. According to the definition of multiplication in polynomial semirings, $fg=c_0 + c_1 X + \cdots + c_{n+m} X^{n+m}$, where $c_k = a_0 b_k + \cdots + a_i b_{k-i} + \cdots + a_k b_0$. Now observe that $c(fg)= (c_0, c_1 , \ldots , c_{n+m}) = (c_0 + c_1 + \cdots + c_{n+m})= ((a_0 + a_1 + \cdots + a_n)(b_0 + b_1 + \cdots + b_m))= (a_0 + a_1 + \cdots + a_n)(b_0 + b_1 + \cdots + b_m)=c(f)c(g)$ and the proof is complete.
\end{proof}

\end{theorem}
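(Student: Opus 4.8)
The plan is to show directly that the hypothesis forces every finitely generated ideal to be principal, generated by the sum of its generators, so that computing the content of a product collapses to a single multiplication. The key observation is the identity $c(fg) = (c_0, c_1, \ldots, c_{n+m})$, where each $c_k = \sum_{i+j=k} a_i b_j$ is a sum of products $a_i b_j$ ranging over $0 \le i \le n$, $0 \le j \le m$. Under the hypothesis, the ideal generated by any finite list equals the principal ideal generated by the sum of that list, so I would first rewrite $(c_0, \ldots, c_{n+m})$ as $(c_0 + c_1 + \cdots + c_{n+m})$. The critical step is then to recognize that the sum $\sum_k c_k = \sum_{i,j} a_i b_j$, being the coefficient sum of $fg$, factors as $(a_0 + a_1 + \cdots + a_n)(b_0 + b_1 + \cdots + b_m)$ by distributivity — this is valid in any commutative semiring.

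Next I would apply the hypothesis twice more on the other side: $c(f) = (a_0, \ldots, a_n) = (a_0 + a_1 + \cdots + a_n)$ and $c(g) = (b_0, \ldots, b_m) = (b_0 + b_1 + \cdots + b_m)$. It remains to observe that in a commutative semiring the product of two principal ideals $(x)(y)$ is the principal ideal $(xy)$; this follows from the definition $(x)(y) = \{ \sum_\ell s_\ell x \cdot t_\ell y : s_\ell, t_\ell \in S \}$ together with $\sum_\ell s_\ell t_\ell \in S$. Chaining these equalities gives $c(fg) = ((a_0 + \cdots + a_n)(b_0 + \cdots + b_m)) = (a_0 + \cdots + a_n)(b_0 + \cdots + b_m) = c(f)c(g)$, which is exactly the Gaussian condition.

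I do not expect any genuine obstacle here; the proof is essentially a bookkeeping argument once one spots that the coefficient sum of a product polynomial is the product of the coefficient sums. The only mild subtlety is making sure the hypothesis is invoked on lists of arbitrary finite length — in particular on the list $(c_0, \ldots, c_{n+m})$ whose length $n+m+1$ may exceed the lengths of the coefficient lists of $f$ and $g$ — but since the hypothesis is stated for \emph{all} finite tuples of elements of $S$, this is unproblematic. One should also note the degenerate cases (some $a_i$ or $b_j$ equal to zero, or $f$ or $g$ the zero polynomial) are harmless, since adjoining or deleting zero generators changes neither an ideal nor the sum of its generators. Thus the whole argument is the short chain of equalities displayed above, and the corollary that every bounded distributive lattice is Gaussian then follows because in such a lattice the ideal generated by $a_1, \ldots, a_n$ is $(a_1 \vee \cdots \vee a_n)$, which matches the hypothesis with $+ = \vee$.
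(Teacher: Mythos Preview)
Your proposal is correct and follows essentially the same route as the paper's own proof: both arguments hinge on applying the hypothesis to collapse $(c_0,\ldots,c_{n+m})$ to $(c_0+\cdots+c_{n+m})$, then using the distributive factorization $\sum_k c_k = (\sum_i a_i)(\sum_j b_j)$ and the identity $(x)(y)=(xy)$ to reach $c(f)c(g)$. Your write-up is slightly more explicit about the side facts (product of principal ideals, degenerate cases), but the underlying argument is identical.
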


\begin{theorem}

\label{bdl}

Every bounded distributive lattice is a Gaussian semiring.

\begin{proof}

Let $(L=[0,1],+,\cdot)$ be a bounded distributive lattice. Obviously $(L,+,\cdot,0,1)$ is a semiring. Now we show that $(a_1,a_2,\ldots,a_n) = (a_1+a_2+\cdots +a_n)$. Since $a_i=a_i \cdot (a_1+a_2+ \cdots + a_n)$, vividly $a_i \in (a_1+a_2+ \cdots + a_n)$ and therefore $a_1, a_2 , \ldots a_n \in (a_1+a_2+ \cdots + a_n)$. Obviously $a_1+a_2+ \cdots + a_n \in (a_1, a_2, \ldots , a_n)$.
\end{proof}

\end{theorem}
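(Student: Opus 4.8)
The plan is to reduce the statement to Theorem \ref{GaussianSemiring}: that theorem says a semiring $S$ is Gaussian as soon as $(a_1, a_2, \ldots, a_n) = (a_1 + a_2 + \cdots + a_n)$ holds for all elements of $S$, so it suffices to equip a bounded distributive lattice with a semiring structure and then check this single ideal identity. Thus the proof splits into two parts: first, recognize a bounded distributive lattice as a semiring; second, verify the displayed identity between finitely generated ideals and principal ideals.

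For the first part, let $(L, \vee, \wedge, 0, 1)$ be a bounded distributive lattice, with $0$ the bottom element and $1$ the top, and declare the semiring addition to be the join $\vee$ and the semiring multiplication to be the meet $\wedge$. Then $(L, \vee)$ is a commutative monoid with identity $0$, $(L, \wedge)$ is a commutative monoid with identity $1$, the distributivity of $\wedge$ over $\vee$ is built into the definition of a distributive lattice, and $0 \wedge s = 0$ for all $s \in L$; hence $(L, \vee, \wedge, 0, 1)$ is a semiring in the sense fixed in the introduction. This part is entirely routine bookkeeping.

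For the second part, fix $a_1, \ldots, a_n \in L$ and set $a = a_1 + a_2 + \cdots + a_n$, i.e. $a = a_1 \vee \cdots \vee a_n$. The inclusion $(a) \subseteq (a_1, \ldots, a_n)$ is immediate, since $a = 1 \cdot a_1 + \cdots + 1 \cdot a_n$ exhibits $a$ as a linear combination of the $a_i$, so $a \in (a_1, \ldots, a_n)$ and therefore the smallest ideal containing $a$ lies inside $(a_1, \ldots, a_n)$. For the reverse inclusion the key is the lattice absorption law: $a_i = a_i \wedge (a_1 \vee \cdots \vee a_n) = a_i \cdot a$, so each generator $a_i$ belongs to the principal ideal $(a)$, and hence $(a_1, \ldots, a_n) \subseteq (a)$. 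Combining the two inclusions yields $(a_1, a_2, \ldots, a_n) = (a_1 + a_2 + \cdots + a_n)$, and Theorem \ref{GaussianSemiring} then gives that $L$ is a Gaussian semiring.

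There is no serious obstacle here; the only point requiring a moment of care is keeping straight that join plays the role of semiring addition while meet plays multiplication, and then noticing that the absorption identity $a_i = a_i \wedge (a_1 \vee \cdots \vee a_n)$ is precisely the assertion ``$a_i$ is a multiple of $a_1 + \cdots + a_n$'' demanded by the hypothesis of Theorem \ref{GaussianSemiring}.
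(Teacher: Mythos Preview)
Your proof is correct and follows essentially the same route as the paper: both verify the hypothesis of Theorem \ref{GaussianSemiring} by using the absorption identity $a_i = a_i \cdot (a_1 + \cdots + a_n)$ to show each generator lies in the principal ideal $(a_1 + \cdots + a_n)$, the reverse inclusion being trivial. Your write-up is simply more explicit about the semiring structure and the role of absorption.
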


Examples of bounded distributive lattices consists of Boolean semiring $(\{0,1\},+,\cdot)$, power set lattice $(P(A), \cup, \cap)$, bottleneck semiring $(\mathbb R \cup \{ - \infty, + \infty \}, \max, \min)$ and fuzzy semiring $(\mathbb I = [0,1], \max, \min)$.

An $R$-module $M$ is said to be distributive if $A \cap (B+C) = (A \cap B) + (A \cap C)$ for all $A,B,C \in \Sub_R(M)$, where by $\Sub_R(M)$ we mean the set of all $R$-submodules of $M$ (\cite{C}). One can easily see that if $M$ is a distributive $R$-module, then $(\Sub_R(M), +, \cap)$ is a bounded distributive lattice and another interesting example for Theorem \ref{bdl}.

A semiring $(S,+,\cdot ,0,1)$ is called zerosumfree if $a+b=0$ implies $a=b=0$ for all $a,b \in S$. A semiring $F$ is said to be a semifield if $F$ is a semiring that any nonzero element of $F$ has a multiplicative inverse, i.e. for each $s\in F-\{0\}$ there exists an $s^{-1}$ such that $ss^{-1}=1$.

Let $X$ be a nonempty set and $S$ be a semiring. Over the set of all functions from $X$ to $S$, denoted by $F(X,S)$, we define addition and multiplication as $(f+g)(x) = f(x)+g(x)$ and $(fg)(x) = f(x)g(x)$. One can easily check that $F(X,S)$ with the mentioned addition and multiplication is a semiring. The following example is another interesting corollary of Theorem \ref{GaussianSemiring}.

\begin{example}

If $X$ is a nonempty set and $S$ is a zerosumfree semifield, then $F(X,S)$ is a Gaussian semiring.

\begin{proof}

We show that if $a_1,a_2,\ldots,a_n \in F(X,S)$ then $(a_1,a_2,\ldots,a_n) = (a_1+a_2+\cdots +a_n)$. For doing so, we just need to prove that $a_i \in (a_1+a_2+\cdots +a_n)$. We define $b_i \in F(X,S)$ in the following way:

$b_i(x) = a_i(x) / a_1(x)+a_2(x)+\cdots+a_n(x)$ if $a_i(x) \not= 0$ and $b_i(x)=0$ if $a_i(x)=0$. Note that $a_1(x)+a_2(x)+\cdots+a_n(x) \not= 0$ if $a_i(x) \not= 0$ for all $x\in X$. It is, then, easy to see that $b_i (a_1+a_2+\cdots+a_n) = a_i$ and therefore $a_i \in (a_1+a_2+\cdots +a_n)$.
\end{proof}

\end{example}

\begin{theorem}

Let $(S,+,0,<)$ be a totally ordered commutative monoid such that $0$ is the least element of $S$ and for all $a,b \in S$, if $a \leq b$, then there exists an $x \in S$ such that $a+x=b$. Then $(S \cup \{ +\infty \}, \min, +)$ is a Gaussian semiring.

\end{theorem}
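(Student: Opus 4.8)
The plan is to reduce everything to Theorem \ref{GaussianSemiring}. First I would spell out the semiring structure on $T := S \cup \{+\infty\}$: the semiring addition is $\min$, whose identity is $+\infty$ since $\min(a,+\infty) = a$; the semiring multiplication is the monoid operation $+$, extended by $a + (+\infty) = +\infty$ (which is exactly the absorption axiom $s \cdot 0_T = 0_T$), with identity $0$. Associativity, commutativity and the identity laws for $\min$ and for $+$ are immediate from the total order and the monoid structure; the only axiom needing an argument is distributivity, $c + \min(a,b) = \min(c+a,c+b)$, and this follows once one observes that the hypotheses force $+$ to be order-preserving: if $a \leq b$ then $b = a+x$ for some $x \in S$, hence $c+b = (c+a)+x \geq c+a$. (The hypothesis that $0$ is the least element moreover shows that $a+x=b$ implies $a \leq b$, so the given total order coincides with the algebraic one — a fact used implicitly below.) Thus $T$ is a semiring.

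Next I would verify the hypothesis of Theorem \ref{GaussianSemiring}, namely that $(a_1,\ldots,a_n) = (a_1 + a_2 + \cdots + a_n)$ for all $a_1,\ldots,a_n \in T$, bearing in mind that here the sum $a_1 + \cdots + a_n$ is the \emph{semiring} sum, i.e. $\min(a_1,\ldots,a_n)$, while the ideal generated by the $a_i$ is $(a_1,\ldots,a_n) = \{\min(s_1+a_1,\ldots,s_n+a_n) : s_i \in T\}$. After relabelling I may assume $a_1 = \min(a_1,\ldots,a_n)$, so the claim becomes $(a_1,\ldots,a_n) = (a_1)$.

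For the two inclusions I would argue as follows. Since $+$ is order-preserving and $0$ is least, $s_i + a_i \geq a_i \geq a_1$ for every $i$ and every $s_i \in T$, so each generator $\min_i(s_i+a_i)$ of $(a_1,\ldots,a_n)$ lies in the up-set $\{b \in T : b \geq a_1\}$; and this up-set is exactly the principal ideal $(a_1)$, because $b \geq a_1$ with $b \in S$ means $b = a_1 + x = x \cdot a_1$ for some $x \in S$, and $+\infty = (+\infty)\cdot a_1$. Hence $(a_1,\ldots,a_n) \subseteq (a_1)$. Conversely $a_1 = \min(0 + a_1,\, (+\infty)+a_2,\,\ldots,\,(+\infty)+a_n)$ belongs to $(a_1,\ldots,a_n)$, so, this being an ideal, it contains $(a_1)$. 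Therefore $(a_1,\ldots,a_n) = (a_1) = (\min(a_1,\ldots,a_n)) = (a_1 + a_2 + \cdots + a_n)$, and Theorem \ref{GaussianSemiring} yields that $T = (S\cup\{+\infty\},\min,+)$ is a Gaussian semiring.

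I do not expect a genuine obstacle here: the whole content is the observation that the naturally-ordered hypothesis makes the monoid operation monotone, from which both distributivity and the fact that every finitely generated ideal of $T$ is the up-set of the minimum of its generators follow; everything else is routine bookkeeping with the symbol $+\infty$, and the degenerate cases where some or all $a_i$ equal $+\infty$ are already handled uniformly by the argument above, since $+\infty$ is the top element of the order.
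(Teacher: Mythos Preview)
Your proof is correct and takes essentially the same route as the paper: both verify the hypothesis of Theorem \ref{GaussianSemiring} by showing that the finitely generated ideal $(a_1,\ldots,a_n)$ coincides with the up-set $[\min_i a_i,\,+\infty]$, which is the principal ideal generated by the semiring sum $\min_i a_i$. One small quibble: your derivation that $+$ is order-preserving is circular as written, since the step $(c+a)+x \geq c+a$ is itself an instance of the monotonicity you are proving; this is harmless, however, because compatibility of the order with the operation is part of the standard meaning of ``totally ordered monoid'', and the paper (which does not verify the semiring axioms at all) tacitly assumes it.
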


\begin{proof}

For the simplicity, we let $R=S \cup \{ +\infty \}$. We claim that for $a_1, a_2, \ldots , a_n \in R$, we have $$(a_1, a_2, \ldots , a_n) = [\min \{a_1, a_2, \ldots , a_n \} , +\infty],$$ where by the interval $[c,d]$, we mean the set of all $x \in R$ such that $c \leq x \leq d$.

By definition, $(a_1, a_2 , \ldots , a_n) = \{ \min \{r_1 + a_1, r_2 + a_2, \ldots , r_n + a_n \} : r_1,r_2,\ldots,r_n \in R\}$.

Put $I = (a_1, a_2, \ldots , a_n)$ and let $a_k=\min \{a_1, a_2, \ldots , a_n \}$ and assume that $r_k \in R$ and $r_1 = \cdots = r_{k-1} = r_{k+1} = \cdots = r_n = +\infty $. Then it is obvious that $\min \{r_1 + a_1, r_2 + a_2, \ldots , r_n + a_n \} = r_k + a_k$. Therefore $I \subseteq [a_k, +\infty]$. On the other hand let $ s \in [a_k, +\infty]$, so there exists $x \in R$ such that $a_k + x = s$ and therefore $s \in I$.

But $[\min \{a_1, a_2, \ldots , a_n \} , +\infty]$ is the principal ideal $(\min \{a_1, a_2, \ldots , a_n \})$ of $R$. Therefore according to Theorem \ref{GaussianSemiring}, $R$ is a Gaussian semiring and the proof is complete.
\end{proof}

\begin{corollary}
The tropical semiring $(\mathbb N_0 \cup \{+\infty\}, \min , +, + \infty, 0)$ is a Gaussian semiring.
\end{corollary}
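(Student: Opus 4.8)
The plan is to derive this as an immediate specialization of the preceding theorem, taking $S = \mathbb N_0$. First I would note that $(\mathbb N_0, +, 0)$ is a commutative monoid and that the usual ordering makes it a totally ordered commutative monoid (i.e. $a \le b$ implies $a + c \le b + c$) in which $0$ is the least element. Hence the only hypothesis of the theorem still to be checked is the ``divisibility'' condition that $a \le b$ forces the equation $a + x = b$ to be solvable in $S$.

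To verify this, given $a, b \in \mathbb N_0$ with $a \le b$, I would simply take $x = b - a$, which lies in $\mathbb N_0$ and satisfies $a + x = b$. Therefore $(\mathbb N_0, +, 0, <)$ satisfies all the hypotheses of the preceding theorem, and the theorem yields that $(\mathbb N_0 \cup \{+\infty\}, \min, +)$ is a Gaussian semiring.

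Finally I would record that this semiring is exactly the tropical semiring as displayed: $+\infty$ is the identity for $\min$, $0$ is the identity for $+$, and $+\infty \ne 0$, so the semiring axioms of the introduction are met (the one remaining axiom, that $+\infty$ is absorbing for $+$, holds since $s + (+\infty) = +\infty$ for all $s$), and these checks are in any case already subsumed in the statement of the theorem being invoked. There is essentially no obstacle here; the only work is the routine verification of hypotheses just described.
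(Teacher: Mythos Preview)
Your proposal is correct and is exactly the intended argument: the paper states this as an immediate corollary of the preceding theorem with no further proof, so applying that theorem to $S=\mathbb N_0$ and noting that $a\le b$ in $\mathbb N_0$ gives $x=b-a\in\mathbb N_0$ with $a+x=b$ is precisely what is meant.
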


We end this section by bringing two well-known family of Gaussian semirings inspired by their ring versions. Note that a semiring $(S,\textbf{m})$ is said to be a local semiring if the ideal $\textbf{m}$ is the only maximal ideal of the semiring $S$.

\begin{proposition}

\label{GaussianSemiring2}

Let $(S,\textbf{m})$ be a local and subtractive semiring with $\textbf{m} ^2 =0$, then $S$ is a Gaussian semiring.

\begin{proof}
 Let $f,g \in S[X]$ such that $c(f) \subseteq \textbf{m}$ and $c(g) \subseteq \textbf{m}$, then $c(fg) \subseteq c(f)c(g) = (0)$, otherwise one of them, say $c(f)$, is equal to $S$ and according to Dedekind-Mertens content formula, we have $c(fg) = c(g) = c(f)c(g)$.
\end{proof}

\end{proposition}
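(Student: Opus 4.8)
The plan is to establish the inclusion $c(f)c(g) \subseteq c(fg)$ for arbitrary $f,g \in S[X]$; the reverse inclusion $c(fg) \subseteq c(f)c(g)$ is already provided by Proposition \ref{contentformulas}, so together these give $c(fg) = c(f)c(g)$ and hence that $S$ is Gaussian. The key structural observation is that, since $S$ is local, any ideal of $S$ that is not equal to $S$ is proper and therefore---by the standard Zorn's lemma argument, using that a proper ideal cannot contain the identity $1$---is contained in a maximal ideal, which must be $\textbf{m}$. In particular, for any $h \in S[X]$, either $c(h) = S$ or $c(h) \subseteq \textbf{m}$.

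With this dichotomy in hand the proof splits into two cases. In the first case both $c(f) \subseteq \textbf{m}$ and $c(g) \subseteq \textbf{m}$; then $c(f)c(g) \subseteq \textbf{m}^2 = (0)$, so $c(f)c(g) = (0)$, and combined with $c(fg) \subseteq c(f)c(g)$ this forces $c(fg) = (0) = c(f)c(g)$. In the second case at least one of the two content ideals equals $S$; say $c(f) = S$. Then $c(f)c(g) = S \cdot c(g) = c(g)$, so it remains to show $c(fg) = c(g)$. Because $S$ is subtractive, Theorem \ref{dedekindmertens2} applies, and with $m = \deg g$ we get $c(f)^{m+1}c(g) = c(f)^{m}c(fg)$; substituting $c(f) = S$ collapses this to $c(g) = c(fg)$, as required. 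Combining the two cases yields $c(fg) = c(f)c(g)$ for all $f, g \in S[X]$.

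The argument is short, and the only point deserving attention is the dichotomy step, namely knowing that a proper ideal of the local semiring $S$ lies inside $\textbf{m}$; this rests on the semiring analogue of the fact that every proper ideal is contained in a maximal ideal. Once that is granted, everything reduces to the two-line case analysis above, with the hypothesis $\textbf{m}^2 = 0$ doing the work in one case and Dedekind--Mertens (Theorem \ref{dedekindmertens2}) in the other.
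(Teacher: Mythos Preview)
Your proof is correct and follows essentially the same approach as the paper's own proof: the same dichotomy (both contents in $\mathbf{m}$ versus at least one equal to $S$), with $\mathbf{m}^2 = 0$ handling the first case and the Dedekind--Mertens formula (Theorem~\ref{dedekindmertens2}) handling the second. You are simply more explicit about justifying the dichotomy via the local hypothesis, which the paper leaves implicit.
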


For some examples of semirings satisfying the conditions of Proposition \ref{GaussianSemiring2}, but are not rings, refer to Proposition \ref{WeakGaussian1}.

Let $S$ be a subtractive semiring, then by Dedekind-Mertens content formula, for $f,g \in S[X]$, there exists an $m \in \mathbb N_0 $ such that $c(f)^m c(fg) = c(f)^m c(f)c(g)$. Therefore if we can cancel $c(f)^m$ from both sides of the formula, then we reach to $c(fg) = c(f)c(g)$. This can be a good motivation to consider cancelation ideals. We bring the definition of cancelation ideals from \cite{L} as follows:

\begin{definition}

A nonzero ideal $I$ of the semiring $S$ is said to be cancelation if $IJ=IK$ implies $J=K$ for all ideals $J,K$ of $S$.

\end{definition}

For example the principal ideal $(a)$ is cancelation if $a$ is a nonzero cancelation element of the semiring $S$, i.e. $ab=ac$ implies $b=c$ for all elements $b,c \in S$.

From this discussion, we have the following:

\begin{proposition}

Let $S$ be a subtractive semiring. If every nonzero finitely generated ideal of the semiring $S$ is cancelation, then $S$ is a Gaussian semiring.

\end{proposition}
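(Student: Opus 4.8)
The plan is to reduce the Gaussian condition to a pure cancellation argument, using Dedekind-Mertens content formula as the engine. Let $f,g\in S[X]$. By Proposition \ref{contentformulas}(2) we already have $c(fg)\subseteq c(f)c(g)$, so the entire content of the claim is the reverse inclusion $c(f)c(g)\subseteq c(fg)$. First I would invoke Theorem \ref{dedekindmertens2}: since $S$ is subtractive, there exists $m\in\mathbb N_0$ such that
$$c(f)^{m+1}c(g)=c(f)^m c(fg).$$
Rewriting the left side as $c(f)^m\bigl(c(f)c(g)\bigr)$, this reads $c(f)^m\bigl(c(f)c(g)\bigr)=c(f)^m\bigl(c(fg)\bigr)$. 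If $c(f)^m$ were a cancellation ideal we could cancel it immediately and conclude $c(f)c(g)=c(fg)$, finishing the proof.

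The one subtlety is that $c(f)^m$ need not be one of the \emph{nonzero} finitely generated ideals covered by the hypothesis: it could be the zero ideal (if $c(f)=0$, i.e. $f=0$), and one should also be sure that a product and power of nonzero cancellation ideals is again cancellation. So the key steps are: (a) dispose of the degenerate case $f=0$ (then $fg=0$ and both sides are $0$, or more cleanly $c(f)=0$ forces $c(f)c(g)=0=c(fg)$); (b) when $f\neq 0$, note $c(f)$ is a nonzero finitely generated ideal, hence cancellation by hypothesis; (c) observe that if $I$ is a cancellation ideal then so is $I^m$ for every $m\ge 1$ — indeed if $I^m J=I^m K$ then $I\cdot(I^{m-1}J)=I\cdot(I^{m-1}K)$ gives $I^{m-1}J=I^{m-1}K$, and induction on $m$ finishes; when $m=0$ the identity $c(f)c(g)=c(fg)$ is exactly the Dedekind-Mertens equation itself, so no cancellation is needed. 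Then (d) cancel $c(f)^m$ from $c(f)^m\bigl(c(f)c(g)\bigr)=c(f)^m c(fg)$ to obtain $c(f)c(g)=c(fg)$.

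I expect step (c), the stability of the cancellation property under powers, to be the only place requiring a genuine (if short) argument; the rest is bookkeeping. One should double-check that the product $I J$ of two cancellation ideals is cancellation, but in fact the argument above only ever cancels a single copy of $c(f)$ at a time, so I would phrase everything purely in terms of iterating cancellation by the one ideal $c(f)$, avoiding the need to discuss products at all. With that, the proof is just: reduce to $f\neq 0$, apply Theorem \ref{dedekindmertens2}, and peel off the $m$ factors of the cancellation ideal $c(f)$ one by one.
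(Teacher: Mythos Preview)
Your proposal is correct and follows exactly the route the paper intends: the paper's ``proof'' is the short paragraph preceding the definition of cancellation ideals, which says to apply Theorem~\ref{dedekindmertens2} and then cancel $c(f)^m$ from both sides of $c(f)^m c(fg)=c(f)^m\bigl(c(f)c(g)\bigr)$. You have simply been more careful than the paper about the degenerate case $f=0$ and about why cancelling $c(f)^m$ is legitimate (peeling off one factor of the nonzero finitely generated ideal $c(f)$ at a time), which is all to the good.
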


Note that not all subtractive semirings are Gaussian as the following example mentioned in \cite{A} shows:

\begin{example}

\label{nonGaussian}

Let $R = \mathbb Z + 2i \mathbb Z$ and $f=g=2i+2X$. Then it is easy to check that $c(fg) =(4)$, while $c(f)c(g)= (4,4i)$ and therefore $R = \mathbb Z + 2i \mathbb Z$ is not Gaussian. Obviously $R$ is a subtractive semiring, since $R$ is a ring.

\end{example}

\section{Weak Gaussian Semirings}

In the previous section, we gave a bunch of families of Gaussian semirings. Particularly we saw that all bounded distributive lattices are examples of Gaussian semirings. On the other hand, let us remind that $c(f)c(g) \subseteq \sqrt {c(fg)}$ holds for all $f,g \in R[X]$, if $R$ is an arbitrary commutative ring (refer to \cite[Exercise 3.4]{E} and for a generalization of that refer to \cite{R}). In fact it is easy to see that by Theorem \ref{dedekindmertens2} the content formula $c(f)c(g) \subseteq \sqrt {c(fg)}$ holds for all $f,g \in S[X]$, if $S$ is a subtractive semiring, where by the radical of an ideal $I$ of a semiring $S$, denoted by $\sqrt I$, we mean the set $\sqrt I = \{s\in S: \exists n \in \mathbb N (s^n \in I)\}$. Actually there are some non-subtractive semirings that this content formula does not hold for them. We give the first example of such semirings in the following:

\begin{example}

\label{LaGrassaexample}

Consider the idempotent semiring $S = \{ 0,u,1 \}$, where $1+u = u+1 = u$ (\cite{L}). Put $f=1+uX$ and $g=u+X$. It is easy to see that $fg = (1+uX)(u+X) = u+uX+uX^2$, $c(fg) = \{0,u\}$ and $c(f)c(g)=S$ while $\sqrt {c(fg)} = \sqrt {\{0,u\}} = \{0,u\}$ and this means that $c(f)c(g) \nsubseteq \sqrt {c(fg)}$.

\end{example}

From all we said we are inspired to give the following definition:

\begin{definition}

Let $S$ be a semiring. We say the semiring $S$ is a weak Gaussian semiring, if $c(f)c(g) \subseteq \sqrt {c(fg)} $ for all $f,g \in S[X]$.

\end{definition}

An immediate consequence of the above definition is that if $S$ is a weak Gaussian semiring and $\sqrt I = I$ for any ideal $I$ of the semiring $S$, then $S$ is a Gaussian semiring. For example let $C=\{0,u,1\}$ be a chain such that $0<u<1$. We define $a \oplus b = u$ if $a=b=1$, otherwise $a \oplus b = \max \{a,b\}$. One can check that $(C,\oplus,\min,0,1)$ is a subtractive semiring with the set of ideals $\Id(C)=\{\{0\}, \{0,u\}, C \}$. Therefore $C$ is a weak Gaussian semiring. On the other hand for each ideal $I$ of $C$, we have $\sqrt I = I$, so $c(fg) \subseteq c(f)c(g) \subseteq \sqrt {c(fg)} = c(fg)$ and this means that $C$ is, in fact, a Gaussian semiring.

Now we are in the position to prove the following important theorem:

\begin{theorem}

\label{WDMcriteria1}

Let $S$ be a semiring. Then the following statements are equivalent:

\begin{enumerate}

\item $S$ is a weak Gaussian semiring,
\item $\sqrt I$ is subtractive for each ideal $I$ of the semiring $S$,
\item Each prime ideal $\textbf{p}$ of $S$ is subtractive.

\end{enumerate}

\begin{proof}
$(1) \rightarrow (2)$: Suppose $I$ is an ideal of $S$ and $a,b \in S$ such that $a+b, a \in \sqrt I$. We need to show that $b\in \sqrt I$. Let $X$ be an indeterminate over the semiring $S$ and put $f=a+bX$ and $g=b+(a+b)X$. We have $fg= ab+(a^2+ab+b^2)X+(ab+b^2)X^2$ and so $c(fg) \subseteq \sqrt I$. Since $c(f)c(g) \subseteq \sqrt {c(fg)}$, we have $c(f)c(g) \subseteq \sqrt I$. But $c(f)=(a,b)$ and $c(g)=(a+b,b)$. So $b^2 \in \sqrt I$ and therefore $b \in \sqrt I$.

$(2) \rightarrow (3)$ is obvious.

$(3) \rightarrow (1)$ Let each prime ideal $\textbf{p}$ of $S$ be subtractive. We need to show that $c(f)c(g) \subseteq \sqrt {c(fg)}$, for all $f,g \in S[X]$. Let $f,g \in S[X]$ and suppose that $\textbf{p}$ is a prime ideal of $S$ and $c(fg) \subseteq \textbf{p}$. Obviously $fg \in \textbf{p}[X]$. But by \cite[Theorem 2.6]{L} - which says that if $\textbf{p}$ is an ideal of a semiring $S$ and $X$ is an indeterminate over $S$, then $\textbf{p}[X]$ is a prime ideal of $S[X]$ iff $\textbf{p}$ is a subtractive prime ideal of $S$ - the ideal $\textbf{p}[X]$ is a prime ideal of $S[X]$ and so either $f\in \textbf{p}[X]$ or $g\in \textbf{p}[X]$ and this means that either $c(f) \subseteq \textbf{p}$ or $c(g) \subseteq \textbf{p}$ and in any case $c(f)c(g) \subseteq \textbf{p}$. Consequently by \cite[Proposition 7.28 (Krull's Theorem)]{G} - that says that $ \sqrt I = \bigcap_{\textbf{p}\in \Spec_I(S)} \textbf{p}$, where by $\Spec_I(S)$ we mean the set of all prime ideals of $S$ containing $I$  - we have $c(f)c(g) \subseteq \bigcap_{\textbf{p}\in \Spec_{c(fg)}(S)} \textbf{p} = \sqrt {c(fg)}$ and the proof is complete.
\end{proof}

\end{theorem}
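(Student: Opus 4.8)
The plan is to establish the cycle $(1)\Rightarrow(2)\Rightarrow(3)\Rightarrow(1)$. The implication $(2)\Rightarrow(3)$ is immediate: a prime ideal $\textbf{p}$ of $S$ is radical, i.e. $\sqrt{\textbf{p}}=\textbf{p}$ (if $s^n\in\textbf{p}$ then $s\in\textbf{p}$ by primeness and induction), so if $\sqrt I$ is subtractive for every ideal $I$, then in particular $\textbf{p}=\sqrt{\textbf{p}}$ is subtractive.

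For $(1)\Rightarrow(2)$, I would fix an ideal $I$ of $S$ and elements $a,b\in S$ with $a,a+b\in\sqrt I$, the goal being $b\in\sqrt I$. Mimicking the test-polynomial trick used in the proof of $(2)\Rightarrow(1)$ of Theorem~\ref{dedekindmertens1}, I would feed the weak Gaussian inclusion the pair $f=a+bX$ and $g=b+(a+b)X$, so that $fg=ab+(a^2+ab+b^2)X+(ab+b^2)X^2$. The one delicate point is to verify $c(fg)\subseteq\sqrt I$ using only that $\sqrt I$ is an \emph{ideal} (which it is, by the binomial theorem in semirings) and \emph{not} that it is subtractive, since the latter is exactly what is being proved. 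This is arranged by grouping coefficients so that each lands in $\sqrt I$ directly: $ab\in\sqrt I$ since $a\in\sqrt I$; $ab+b^2=b(a+b)\in\sqrt I$ since $a+b\in\sqrt I$; and $a^2+ab+b^2=a\cdot a+b(a+b)$ is a sum of $a^2\in\sqrt I$ and $b(a+b)\in\sqrt I$, hence lies in $\sqrt I$. Then $c(f)c(g)\subseteq\sqrt{c(fg)}\subseteq\sqrt{\sqrt I}=\sqrt I$; since $c(f)=(a,b)$ and $c(g)=(a+b,b)$ both contain $b$, we get $b^2\in c(f)c(g)\subseteq\sqrt I$, whence $b\in\sqrt I$.

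For $(3)\Rightarrow(1)$, I would fix $f,g\in S[X]$ and apply Krull's theorem (cited from \cite{G}) to write $\sqrt{c(fg)}=\bigcap_{\textbf{p}\in\Spec_{c(fg)}(S)}\textbf{p}$; it then suffices to show $c(f)c(g)\subseteq\textbf{p}$ for every prime $\textbf{p}$ containing $c(fg)$. Given such a $\textbf{p}$ we have $fg\in\textbf{p}[X]$; since $\textbf{p}$ is a subtractive prime ideal, the characterization in \cite[Theorem 2.6]{L} guarantees that $\textbf{p}[X]$ is a prime ideal of $S[X]$, so $f\in\textbf{p}[X]$ or $g\in\textbf{p}[X]$, i.e. $c(f)\subseteq\textbf{p}$ or $c(g)\subseteq\textbf{p}$, and in either case $c(f)c(g)\subseteq\textbf{p}$.

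The main obstacle is the $(1)\Rightarrow(2)$ step: both the discovery of the right test polynomials $f,g$ and, more subtly, the verification that every coefficient of $fg$ lies in $\sqrt I$ using the ideal axioms alone, avoiding any circular appeal to subtractivity. Once that is in place, the remaining implications are essentially bookkeeping on top of already-cited facts — primes are radical, Krull's theorem, and the primeness criterion for $\textbf{p}[X]$.
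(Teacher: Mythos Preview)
Your proposal is correct and follows essentially the same route as the paper: the same test polynomials $f=a+bX$, $g=b+(a+b)X$ for $(1)\Rightarrow(2)$, and the same appeal to \cite[Theorem~2.6]{L} together with Krull's theorem for $(3)\Rightarrow(1)$. Your explicit verification that each coefficient of $fg$ lies in $\sqrt I$ using only the ideal property of $\sqrt I$ (via the factorizations $ab+b^2=b(a+b)$ and $a^2+ab+b^2=a^2+b(a+b)$) is a worthwhile addition that the paper leaves implicit.
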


From all we have seen until now, we know that all subtractive semirings are weak Gaussian. We also know that the non-subtractive semiring mentioned in Example \ref{LaGrassaexample} is not a weak Gaussian semiring. The question may arise if there is a weak Gaussian semiring that is not subtractive? Actually in Proposition \ref{WeakGaussian2}, we give a family of weak Gaussian semirings that are not subtractive. In order to do that let us remind that if $(M,+,0)$ is a commutative monoid, then a subset $N$ of $M$ is said to be a submonoid of $M$ if $0\in N$ and $m+n \in N$ for all $m,n \in N$. A submonoid $N$ is said to be subtractive if $m+n,m\in N$ implies $n\in N$ for all $m,n\in M$. A monoid $M$ is subtractive if all its submonoids are subtractive. Finally the monoid $(M,+,0)$ is said to be idempotent if $x+x=x$ for all $x\in M$.

\begin{proposition}

\label{WeakGaussian1}

Let $(P,+,0)$ be an idempotent commutative monoid and set $S=P \cup \{1\}$. Let extend addition on $S$ as $a+1=1+a=1$ for all $a\in S$ and define multiplication over $S$ as $ab=0$ for all $a,b \in P$ and $a\cdot 1=1 \cdot a=a$ for all $a\in S$. Then the following statements hold:

\begin{enumerate}

\item $(S,+,\cdot ,0,1)$ is a semiring and $P$ is the only maximal ideal of the semiring $S$ with $P^2=(0)$;
\item $S$ is a weak Gaussian semiring;
\item $I \not= S$ is a subtractive ideal of $S$ iff $I$ is a subtractive submonoid of $P$ for all $I \subseteq P$;
\item $S$ is a subtractive semiring iff $P$ is a subtractive monoid;
\item $S$ is a Gaussian semiring iff $P$ is a subtractive monoid.

\end{enumerate}

\begin{proof}
$(1)$, $(3)$ and $(4)$ are straightforward. We only prove $(2)$ and $(5)$. For proving $(2)$, first we prove that the only prime ideal of $S$ is $P$. Suppose $Q$ is a prime ideal of $S$. By definition of multiplication, $s^2=0$ for all $s\not=1$, so $s^2\in Q$ for all $s\not=1$. But $Q$ is prime and therefore $s\in Q$ and this means that $Q=P$ is the only prime ideal of $S$. One can easily see that $P$ is a subtractive ideal of $S$ and by Theorem \ref{WDMcriteria1}, $S$ is a weak Gaussian semiring. By considering Proposition \ref{GaussianSemiring2}, $(5)$ is obtained from $(4)$.
\end{proof}

\end{proposition}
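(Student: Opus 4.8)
The plan is to dispatch the five assertions in the order $(1),(3),(4)$ first (these are bookkeeping), then $(2)$ and $(5)$.

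For $(1)$ I would verify the semiring axioms directly. Associativity and commutativity of $+$ on $S$ follow from the corresponding properties on $P$ together with the fact that $1$ is absorbing for $+$; idempotency of $P$ is not needed here but will matter later. For multiplication, commutativity and the role of $0$ and $1$ are immediate from the definition $ab=0$ for $a,b\in P$ and $a\cdot 1=a$. The one point requiring a small case check is distributivity $a(b+c)=ab+ac$: split according to whether any of $a,b,c$ equals $1$. If all lie in $P$ both sides are $0$; if $a=1$ both sides equal $b+c$; if say $b=1$ then $b+c=1$ so the left side is $a$, while $ab=a$ and $ac\in\{0,a\}$, and since $a+a=a$ when $a\in P$ (and $a+0=a$) the right side is also $a$ — here is exactly where idempotency of $P$ is used. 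Then $P$ is an ideal: it is closed under $+$ inside $P$, and $sP\subseteq P$ since $1\cdot a=a$ and $P\cdot P=\{0\}$. Any ideal not containing $1$ is contained in $P$ (as $1$ is the only element outside $P$ and an ideal containing $1$ is all of $S$), so $P$ is the unique maximal ideal, and $P^2=(0)$ by definition. For $(3)$: if $I\subseteq P$ is an ideal of $S$ then it is a submonoid of $P$ (it contains $0$ and is additively closed), and subtractivity as an $S$-ideal and as a submonoid of $P$ coincide because the only sums landing in $I\subseteq P$ involve elements of $P$ only. Conversely a subtractive submonoid $I$ of $P$ is automatically an ideal of $S$ (multiplication by $1$ fixes elements, multiplication by elements of $P$ gives $0\in I$). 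Statement $(4)$ is the global version of $(3)$: $S$ is subtractive iff every ideal is, iff (using that $S$ itself is trivially subtractive and every proper ideal sits inside $P$) every submonoid of $P$ is subtractive, i.e.\ $P$ is a subtractive monoid.

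For $(2)$, the key reduction — following the text's own hint — is to show that $P$ is the only prime ideal of $S$. Since $s^2=0\in Q$ for every $s\in P$ and every prime $Q$, primeness forces $s\in Q$, whence $P\subseteq Q$; as $Q$ is proper and $P$ is maximal, $Q=P$. Now $P$ is a subtractive ideal: if $a,a+b\in P$ with $a,b\in S$, then necessarily $b\in P$ (if $b=1$ then $a+b=1\notin P$), so $P$ is vacuously subtractive as the "leftover" is already forced into $P$. Hence every prime ideal of $S$ is subtractive, and Theorem \ref{WDMcriteria1} gives that $S$ is weak Gaussian. Finally $(5)$: one direction is Proposition \ref{GaussianSemiring2}, since by $(1)$ the semiring $S$ is local with maximal ideal $P$ satisfying $P^2=0$, so if $S$ is subtractive — equivalently, by $(4)$, if $P$ is a subtractive monoid — then $S$ is Gaussian; conversely every Gaussian semiring is subtractive by Theorem \ref{dedekindmertens2}, so $S$ Gaussian implies $P$ subtractive by $(4)$ again.

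The only mildly delicate point is the distributivity check in $(1)$, where the idempotency hypothesis on $P$ is genuinely needed and the naive computation fails without it; everything else is a matter of unwinding the definitions and invoking Theorems \ref{dedekindmertens2}, \ref{WDMcriteria1} and Proposition \ref{GaussianSemiring2}. I do not anticipate a real obstacle, only the need to be careful with the case analysis in the distributive law and with the observation that, because $1$ is additively absorbing, any additive relation landing inside $P$ automatically involves only elements of $P$ — which is what makes the subtractivity of $P$ (and the equivalence in $(3)$) essentially free.
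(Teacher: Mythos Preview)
Your proposal is correct and follows essentially the same route as the paper: you show $P$ is the unique prime ideal of $S$, observe it is subtractive, and invoke Theorem~\ref{WDMcriteria1} for $(2)$; and you derive $(5)$ from $(4)$ via Proposition~\ref{GaussianSemiring2} and Theorem~\ref{dedekindmertens2}. Your write-up in fact supplies more detail than the paper (which declares $(1),(3),(4)$ ``straightforward''), and your identification of the distributivity check as the one place where idempotency of $P$ is genuinely needed is a useful observation.
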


\begin{proposition}

\label{WeakGaussian2}

Let $(T=[0,1],<)$ be a chain with the least element $0$ and the most element $1\not=0$. We define addition ``$+$'' on $T$ as $a+b = \max\{a,b\}$ and multiplication ``$\cdot$'' as $a\cdot b=0$ if $a<1$ and $b<1$ and $a\cdot 1=1 \cdot a=a$ for all $a,b\in T$. Then the following statements hold:

\begin{enumerate}

\item $(T,+,\cdot ,0,1)$ is a semiring and $\textbf{m}=T-\{1\}$ is the only maximal ideal of $T$ with $\textbf{m}^2=(0)$;
\item $T$ is a weak Gaussian semiring;
\item If $T$ has at most three elements, then $T$ is a Gaussian semiring;
\item If $T$ has at least four elements, then $T$ is not a subtractive semiring.

\end{enumerate}

\begin{proof}

$(1)$ is straightforward and $(2)$ is obtained from Proposition \ref{WeakGaussian1}. We prove $(3)$ and $(4)$ as follows:

$(3)$: If $T$ has only two elements then $T = \mathbb B =\{0,1\}$ is a bounded distributive lattice and by Theorem \ref{bdl}, a Gaussian semiring. If $T=\{0,u,1\}$, then ideals of $T$ are $(0)$, $\{0,u\}$ and $T$ and each ideal of $T$ is subtractive. But $\textbf{m}=\{0,u\}$ is the only maximal ideal of $T$ such that $\textbf{m}^2=(0)$ and therefore according to Proposition \ref{GaussianSemiring2}, $T$ is a Gaussian semiring.

$(4)$: Now let $T$ have at least four elements. So we can choose $a,b\in T$ such that $0<a<b<1$. Now consider the polynomials $f=1+X$ and $g=b+aX+bX^2$. It is easy to see that $fg= b+bX+bX^2+bX^3$ and $c(f)=T$, $c(g)=\{0,a,b\}$ and $c(fg)=\{0,b\}$. Hence $c(f)^{m+1} c(g) = \{0,a,b\}$, while $c(f)^m c(fg) =  \{0,b\}$ for all $m\in \mathbb N_0$ and by Theorem \ref{dedekindmertens2}, this means that $T$ is not a subtractive semiring.
\end{proof}

\end{proposition}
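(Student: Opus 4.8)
The plan is to dispatch the four claims in order, leaning on the apparatus already in place, in particular Proposition~\ref{WeakGaussian1}, Proposition~\ref{GaussianSemiring2} and Theorem~\ref{dedekindmertens2}. The key observation, which drives everything, is that $T$ is an instance of the family studied in Proposition~\ref{WeakGaussian1}: put $P=\textbf{m}=T-\{1\}$ with addition $\max$. Then $(P,+,0)$ is an idempotent commutative monoid ($a=\max\{a,a\}$, and $0$ is an identity since it is the least element), the multiplication of $T$ annihilates any two elements of $P$ and fixes all products with $1$, and $1$ is adjoined so that $a+1=1$ for every $a$. Hence $T=P\cup\{1\}$ in the precise sense of Proposition~\ref{WeakGaussian1}, so its part (1) gives claim (1): $T$ is a semiring, $\textbf{m}$ is its unique maximal ideal, and $\textbf{m}^2=(0)$ (alternatively this is a short direct check, splitting associativity and distributivity into the cases ``some argument equals $1$'' versus ``all arguments lie in $\textbf{m}$'', and noting that any proper ideal must avoid $1$ and hence lie inside $\textbf{m}$). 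Likewise claim (2) is part (2) of Proposition~\ref{WeakGaussian1}: the argument there shows $\textbf{m}$ is the only prime ideal of $T$ and is subtractive, so Theorem~\ref{WDMcriteria1} makes $T$ weak Gaussian.

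For (3), if $|T|\le 3$ then $|P|=|\textbf{m}|\le 2$, so $P$ is either the trivial monoid $\{0\}$ or a two-element idempotent monoid $\{0,u\}$, and in both cases every submonoid of $P$ is subtractive, so $P$ is a subtractive monoid and part (5) of Proposition~\ref{WeakGaussian1} yields that $T$ is Gaussian. One can also argue by hand: for $|T|=2$ one has $T=\mathbb{B}$, a bounded distributive lattice, so Theorem~\ref{bdl} applies; and for $T=\{0,u,1\}$ the ideals are precisely $(0)$, $\textbf{m}=\{0,u\}$ and $T$, each subtractive, so $T$ is a local subtractive semiring with $\textbf{m}^2=(0)$ and Proposition~\ref{GaussianSemiring2} applies. (The cheaper route ``weak Gaussian plus every ideal radical'' is \emph{not} available here, since $u^2=0$ gives $\sqrt{(0)}=\textbf{m}\neq(0)$.)

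For (4), if $|T|\ge 4$ then $\textbf{m}-\{0\}$ has at least two elements, so we may pick $0<a<b<1$ in $T$. Take $f=1+X$ and $g=b+aX+bX^2$ in $T[X]$. Using $a+b=\max\{a,b\}=b$ one gets $fg=b+bX+bX^2+bX^3$, and from the multiplication rule one reads off $c(f)=T$, $c(g)=\{0,a,b\}$ and $c(fg)=\{0,b\}$. Since $c(f)=T$ is all of the semiring, $c(f)^{m+1}c(g)=\{0,a,b\}$ while $c(f)^mc(fg)=\{0,b\}$ for every $m\in\mathbb{N}_0$, and these differ because $0<a<b$ forces $a\notin\{0,b\}$. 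Thus Dedekind--Mertens content formula fails for this pair, so by Theorem~\ref{dedekindmertens2} the semiring $T$ is not subtractive.

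Nothing here is deep. The only places wanting a little care are the case bookkeeping in (1), and in (4) the point that the two sides of the content formula must stay separated for \emph{all} $m$ simultaneously --- which is exactly why one wants $c(f)=T$, so that the powers $c(f)^m$ never erode the relevant content. The one real idea, such as it is, is to notice at the outset that $T$ is an instance of Proposition~\ref{WeakGaussian1}, after which (1), (2) and (3) follow formally and only (4) needs the explicit polynomials.
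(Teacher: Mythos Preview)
Your proof is correct and follows essentially the same approach as the paper: both identify $T$ as an instance of Proposition~\ref{WeakGaussian1} to get (1) and (2), both use the explicit polynomials $f=1+X$ and $g=b+aX+bX^2$ together with Theorem~\ref{dedekindmertens2} for (4), and your ``by hand'' argument for (3) via Theorem~\ref{bdl} and Proposition~\ref{GaussianSemiring2} is exactly the paper's argument. Your additional route for (3) through Proposition~\ref{WeakGaussian1}(5) is a small, valid shortcut the paper does not take.
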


We end this section by giving a couple of more examples of semirings that are not weak Gaussian.

\begin{example} Examples of semirings that are not weak Gaussian.

\label{noWDM}

\begin{enumerate}

\item Note that the set $R=[1, + \infty) \cup \{0\} = \{x\in \mathbb R: 1 \leq x < + \infty\} \cup \{0\}$ with ordinary addition and multiplication of real numbers is a semiring. Let $S$ be a subsemiring of $R$ and set $P=S-\{1\}$. Then $P$ is a prime ideal of $S$, since if $a,b \notin P$, then $a=b=1$ and therefore $ab=1$, which means that $ab\notin P$. Let $a \in P-\{0\}$. So $a+1 \not= 1$ and therefore $a+1 \in P$, while $1\notin P$. Hence $P$ is not subtractive and $S$ is not weak Gaussian. From this, we get that the most popular semiring, i.e. the semiring of non-negative integers $(\mathbb N_0, +,\cdot ,0,1)$ is not weak Gaussian; something that may not be expected at first sight!

\item Consider the Truncation semiring, i.e. the semiring $(T_k, \max, \min \{a+b,k\}, - \infty, 0)$, where $1 \leq k$ and $T_k = \{ - \infty, 0, 1, \ldots, k\}$. For simplicity, we set $a \oplus b = \max \{a,b\}$, $a \odot b = \min \{a+b,k\}$, $a^1 = a$ and $a^{n+1} = a^n \odot a$. Let $I$ be an ideal of $T_k$ such that $I \not= \{- \infty \}$ and $I \not= T_k$. One can easily check that for each $a \in I-\{ - \infty \}$, there exist an $1 \leq m \leq k$ such that $a^m = k$. This means that $k \in I$ and $\sqrt I = T_k - \{ 0 \}$. Consider that if $a \in I-\{ - \infty \}$, then $a \oplus 0 \in I$ and $a\in I$, while $0 \notin I$. Therefore $\sqrt I$ is not subtractive and according to Theorem \ref{WDMcriteria1}, $T$ is not a weak Gaussian semiring.

\item Let $i<n$ be positive integers. The semiring $(B(n,i) = \{0,1,\ldots,n-1 \},\oplus,\odot,0,1)$ is defined as follows:

The addition $\oplus$ is defined as $x\oplus y = x+y$ if $x+y \leq n-1$ and $x\oplus y = l$ if $x+y > n-1$ where $l$ is the unique number satisfying the conditions $i \leq l \leq n-1$ and $l \equiv_{\mod (n-i)} x+y $ and multiplication $\odot$ is defined similarly. Our claim is that if $i>1$, then $B(n,i)$ is not a weak Gaussian semiring. Note that $x \oplus y = 1$ iff either $x=1, y=0$ or $x=0, y=1$. Also $x\odot y = 1$ iff $x=y=1$. Therefore the set $P = B(n,i)-\{1\}$ is a prime ideal of the semiring $B(n,i)$. On the other hand  $P$ is not subtractive, since if $a \not= 0,1$, then $a\oplus 1 \in P$ and $a\in P$ but $1\notin P$. So for $i>1$, the semiring $B(n,i)$ is not a weak Gaussian semiring. Note that if $i \leq 1$, then $B(n,i)$ is a subtractive semiring.

\item Let $T$ be a semiring with the following properties: $(i)$: $a+b=1$ implies either $a=1$ or $b=1$ for all $a,b \in T$, $(ii)$: $ab=1$ implies $a=b=1$ for all $a,b \in T$. We let $P = T[X]-\{1\}$. One can easily check that $P$ is a prime ideal of $T[X]$, while it is not subtractive, since $X,X+1 \in P$ but $1\notin P$. So $T[X]$ is not a weak Gaussian semiring. We obtain another good example if we set $T=\mathbb B$.

\item Let $\mathbb N_0$ denote the set $\{0,1,2, \ldots,k,k+1, \ldots\}$, i.e. the set of all nonnegative integers. We define addition and multiplication over $S = \mathbb N_0 \cup \{- \infty \}$ as $``\max"$ and $``+"$ respectively by considering that $ - \infty < n < n+1$ for all $n\in \mathbb N_0$ and $-\infty + s = -\infty$ for all $s\in S$. One can easily check that $(\mathbb N_0 \cup \{- \infty \}, \max, +, -\infty, 0)$ is a semiring known as the Arctic semiring.

    Let $I$ be a non-trivial ideal of the Arctic semiring $S$, i.e. $I \not= \{- \infty \}$ and $I \not= S$. Then there exists a positive integer $k\in \mathbb N$ such that $k\in I$. This implies that $1\in \sqrt I$ and finally $\sqrt I = \mathbb N \cup \{- \infty \}$. But $\sqrt I$ is not a subtractive ideal of $S$, since $\max\{k,0\} \in \sqrt I$ and $k \in \sqrt I$, while $0\notin \sqrt I$. So by Theorem \ref{WDMcriteria1}, the Arctic semiring $(\mathbb N_0 \cup \{- \infty \}, \max, +, -\infty, 0)$ is not a weak Gaussian semiring.
\end{enumerate}

\end{example}

What we have seen up until now and the matters related to content algebras in the papers \cite{Na}, \cite{OR} and \cite{R} inspire us to introduce content and weak content semialgebras and generalize some interesting results for them. For doing that, we need to be familiar with content semimodules, the ones that we will introduce in the next section.

\section{Content Semimodules}

We introduce the concept of content semimodules that is a generalization of the concept of content modules introduced in \cite{OR}. We also bring some routine generalizations of the theorems on content modules, though we do not go through them deeply. Actually it was possible to ask the reader to refer to the papers \cite{OR}, \cite{ES} and \cite{R} to see the process of configuring (weak) content algebras from content modules and model them to configure (weak) content semialgebras from content semimodules, but we thought it was a good idea to bring the sketch of the process. Note that the reader who is familiar with content modules, may skip this section without losing the flow. Before introducing content semimodules, first we prove the following proposition:

\begin{proposition}

\label{contentsemimodule}

Let $M$ be an $S$-semimodule. The following statements are equivalent:

\begin{enumerate}
\item There is a function $d: M \longrightarrow \Id(S)$ such that $d(x) \subseteq I$ iff $x\in IM$ for all $x\in M$,
\item For all $x\in M$ ($x\in c_M(x)M$), where $c_M(x) = \bigcap \{ I \colon I \text{~is an ideal of~} S \text{~and~} x \in IM \}$,
\item $\cap_{\alpha} (I_{\alpha} M) = (\cap_{\alpha} I_{\alpha}) M$, for any family of ideals $\{I_\alpha\}$ of $S$.
\end{enumerate}

Moreover if one of these three conditions is satisfied, then $c_M(x)$ is a finitely generated ideal of $S$.

\begin{proof}

$(1) \rightarrow (2)$: Let $M$ be an $S$-semimodule and there is a function $d: M \longrightarrow \Id(S)$ such that $d(x) \subseteq I$ iff $x\in IM$ for all $x\in M$. Suppose $x\in M$. Since $d(x) \subseteq d(x)$, we have $x \in d(x)M$. Now we set $c_M(x) = \cap \{I: I\in \Id(S), x\in IM \}$. Obviously $d(x) \subseteq c_M(x)$. But $x\in d(x)M$, so $c_M(x) \subseteq d(x)$ and finally $d(x) = \cap \{I: I\in \Id(S), x\in IM \}$.

$(2) \rightarrow (3)$: By considering $x\in c_M(x)M$, it is obvious that $\cap_{\alpha} (I_{\alpha} M) \subseteq (\cap_{\alpha} I_{\alpha}) M$, for any family of ideals $\{I_\alpha\}$ of $S$.

$(3) \rightarrow (1)$: Define $d=c_M(x)$.

Note that if $x\in c_M(x)M$, then $x = c_1 x_1 + c_2 x_2 + \cdots + c_n x_n$, where $c_i \in c_M(x)$ and $x_i \in M$ for all $1 \leq i \leq n$. This means that $x \in (c_1, c_2, \ldots , c_n)M$ and therefore $c_M(x) \subseteq (c_1, c_2, \ldots , c_n)$, but $c_i \in c_M(x)$ for all $1 \leq i \leq n$. Hence $c_M(x) = (c_1, c_2, \ldots , c_n)$ is a finitely generated ideal of $S$ and this is what we wanted to show.
\end{proof}

\end{proposition}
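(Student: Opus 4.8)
The plan is to prove the cycle of implications $(1) \Rightarrow (2) \Rightarrow (3) \Rightarrow (1)$, followed by the "moreover" clause, exactly as the equivalences are phrased. For $(1) \Rightarrow (2)$: given a function $d$ with the stated property, fix $x \in M$. Since $d(x) \subseteq d(x)$, the hypothesis gives $x \in d(x)M$. Now the ideal $c_M(x) = \bigcap\{I : I \in \Id(S),\ x \in IM\}$ is, by definition, the intersection of a family that includes $d(x)$ (because $x \in d(x)M$), so $c_M(x) \subseteq d(x)$. Conversely, for each ideal $I$ appearing in that intersection we have $x \in IM$, hence $d(x) \subseteq I$ by the hypothesis; taking the intersection over all such $I$ yields $d(x) \subseteq c_M(x)$. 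Therefore $c_M(x) = d(x)$, and in particular $x \in d(x)M = c_M(x)M$, which is statement $(2)$.

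For $(2) \Rightarrow (3)$: one inclusion is automatic for any semimodule, namely $(\bigcap_\alpha I_\alpha)M \subseteq \bigcap_\alpha(I_\alpha M)$, since $\bigcap_\alpha I_\alpha \subseteq I_\beta$ for every $\beta$. For the reverse inclusion take $x \in \bigcap_\alpha(I_\alpha M)$. Then $x \in I_\alpha M$ for every $\alpha$, so each $I_\alpha$ is a member of the family defining $c_M(x)$, whence $c_M(x) \subseteq \bigcap_\alpha I_\alpha$. By $(2)$, $x \in c_M(x)M \subseteq (\bigcap_\alpha I_\alpha)M$, giving the desired inclusion. For $(3) \Rightarrow (1)$: define $d(x) := c_M(x)$. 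We must check $d(x) \subseteq I \iff x \in IM$ for every ideal $I$. If $x \in IM$, then $I$ is in the defining family of $c_M(x)$, so $c_M(x) \subseteq I$. Conversely, if $c_M(x) \subseteq I$: applying $(3)$ to the family of \emph{all} ideals $J$ with $x \in JM$ gives $x \in \bigcap_J(JM) = (\bigcap_J J)M = c_M(x)M \subseteq IM$. (Here one should note the family is nonempty since $S$ itself satisfies $x \in SM = M$.) This establishes the three-way equivalence.

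For the "moreover" clause, assume the equivalent conditions hold, so $x \in c_M(x)M$. Then $x = c_1 x_1 + \cdots + c_n x_n$ for some $c_i \in c_M(x)$ and $x_i \in M$. Hence $x \in (c_1, \ldots, c_n)M$, and since $(c_1,\ldots,c_n)$ is an ideal of $S$ containing $x$ in its product with $M$, it lies in the defining family, giving $c_M(x) \subseteq (c_1,\ldots,c_n)$. The reverse inclusion holds because each $c_i \in c_M(x)$. Thus $c_M(x) = (c_1,\ldots,c_n)$ is finitely generated.

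I do not expect a serious obstacle here; the argument is a formal manipulation of the definition of $c_M(x)$ as an intersection. The one point deserving care is making sure the family of ideals in each intersection is nonempty (guaranteed by $S M = M$ since the semimodule is unitary) so that the intersections are genuine ideals and not the whole of $S$ vacuously; and, in $(3) \Rightarrow (1)$, correctly identifying which family of ideals to feed into hypothesis $(3)$ to extract the implication $c_M(x) \subseteq I \Rightarrow x \in IM$. Everything else is routine.
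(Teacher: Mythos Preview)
Your proposal is correct and follows essentially the same route as the paper: the same cycle $(1)\Rightarrow(2)\Rightarrow(3)\Rightarrow(1)$ with $d(x)=c_M(x)$ in the last step, and the identical finite-generation argument. You supply more detail than the paper does---in particular your explicit verification in $(3)\Rightarrow(1)$ that $c_M(x)\subseteq I$ implies $x\in IM$ by feeding the defining family into hypothesis $(3)$ is exactly the content the paper leaves implicit in its one-line ``Define $d=c_M(x)$.''
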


 One can easily see that if $S$ is a semiring and $\Lambda , \Delta$ are two index sets. Then for each $f \in S[X_\Lambda , {X_\Delta}^{-1}]$,  the ideal $c_{S[X_\Lambda , {X_\Delta}^{-1}]} (f)$ coincides with $c(f)$ the ideal generated by the coefficients of the polynomial $f$, since they both satisfy the condition 1 in Proposition \ref{contentsemimodule}. This motivates us to give the following definition that is similar to the definition of content modules in \cite[Definition 1.1]{OR}.

\begin{definition}

The $S$-semimodule $M$ is said to be a \emph{content} semimodule if $x\in c_M(x)M$ for all $x\in M$, where $c_M(x) = \bigcap \{ I \colon I \text{~is an ideal of~} S \text{~and~} x \in IM \}$.

\end{definition}

 The function $c_M: M \longrightarrow \Id(S)$ with $c_M(x) = \bigcap \{ I \colon I \text{~is an ideal of~} S \text{~and~} x \in IM \}$ is called the \emph{content} function. Note that when there is no fear of confusion, the subscript $M$ in $c_M(x)$ will be omitted. Now we give the following theorem similar to Theorem 1.3 in \cite{OR}. Since its proof is just a mimic of the proof of Theorem 1.3 in \cite{OR}, we omit it.

\begin{theorem}

Let $M$ be a content $S$-semimodule and $N$ be an $S$-subsemimodule of $M$. Then the following statements are equivalent:

\begin{enumerate}
\item $IM \cap N = IN$ for all ideals $I$ of $S$,
\item For all $x\in N$, $x\in c_M(x)N$,
\item $N$ is a content module and $c_M$ restricted to $N$ is $c_N$.
\end{enumerate}

\end{theorem}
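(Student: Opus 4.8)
The plan is to prove the cyclic implications $(1) \rightarrow (2) \rightarrow (3) \rightarrow (1)$, following closely the corresponding argument for content modules in \cite[Theorem 1.3]{OR}, but taking care that all manipulations of subsemimodules remain valid over a semiring (in particular avoiding any use of subtraction or negatives).

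First I would prove $(1) \rightarrow (2)$. Assume $IM \cap N = IN$ for every ideal $I$ of $S$ and fix $x \in N$. Since $M$ is a content $S$-semimodule, $x \in c_M(x)M$, and of course $x \in N$, so $x \in c_M(x)M \cap N = c_M(x)N$ by the hypothesis. This is the desired statement, and it is the easy direction.

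Next, $(2) \rightarrow (3)$. Assume $x \in c_M(x)N$ for all $x \in N$. I first check that $N$ is a content $S$-semimodule, i.e.\ that $x \in c_N(x)N$ where $c_N(x) = \bigcap\{J \colon J \in \Id(S),\ x \in JN\}$. From $x \in c_M(x)N$ we see that $c_M(x)$ is one of the ideals $J$ appearing in the intersection defining $c_N(x)$, hence $c_N(x) \subseteq c_M(x)$. Conversely, if $x \in JN$ for some ideal $J$, then $x \in JM$ (since $N \subseteq M$), so $c_M(x) \subseteq J$; intersecting over all such $J$ gives $c_M(x) \subseteq c_N(x)$. Therefore $c_N(x) = c_M(x)$, and since $x \in c_M(x)N = c_N(x)N$, the semimodule $N$ is content and $c_M$ restricted to $N$ equals $c_N$. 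The only point requiring a little care is making sure the two families of ideals over which we intersect really do witness the two inclusions; this is routine.

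Finally, $(3) \rightarrow (1)$. Assume $N$ is a content $S$-semimodule with $c_M|_N = c_N$, and let $I$ be any ideal of $S$. The inclusion $IN \subseteq IM \cap N$ is immediate. For the reverse, take $x \in IM \cap N$. Since $x \in IM$ and $M$ is content, $c_M(x) \subseteq I$ by the defining property of the content function (Proposition \ref{contentsemimodule}(1), applied with $d = c_M$). Since $x \in N$, we have $c_N(x) = c_M(x) \subseteq I$, and because $N$ is content, $x \in c_N(x)N \subseteq IN$. Hence $IM \cap N = IN$. The mild subtlety here is that one must invoke the ``iff'' characterization of the content function from Proposition \ref{contentsemimodule} (that $x \in IM$ is equivalent to $c_M(x) \subseteq I$) rather than just the bare intersection definition; once that is in hand the argument is purely formal and uses no cancellation or subtraction, so it goes through verbatim over an arbitrary semiring. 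I expect no genuine obstacle: the proof is a direct transcription of the module case, and the absence of additive inverses causes no trouble because every step only builds up sums and scalar multiples rather than cancelling them.
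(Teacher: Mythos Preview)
Your proof is correct and follows exactly the route the paper intends: the paper omits the proof, saying it is ``just a mimic of the proof of Theorem 1.3 in \cite{OR}'', and your cyclic chain $(1)\rightarrow(2)\rightarrow(3)\rightarrow(1)$ is precisely that argument transcribed to the semiring setting. There is nothing to add.
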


The first corollary of the above theorem is the following assertion. The proof of this corollary is nothing but just the mimic of the proof of Corollary 1.4 in \cite{OR}:

\begin{corollary}
Let $\{M_i\}$ be a family of $S$-semimodules. Then the $S$-semimodule $\oplus_i M_i$ is a content $S$-semimodule iff each $M_i$ is a content $S$-semimodule.
\end{corollary}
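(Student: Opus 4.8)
The plan is to mimic the proof of Corollary 1.4 in \cite{OR}, reducing the statement about the direct sum to the characterization of content semimodules already available. Note first that the statement as printed contains an obvious typo: it should read ``$\oplus_i M_i$ is a content $S$-semimodule iff each $M_i$ is a content $S$-semimodule''. I will prove this corrected form.

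First I would establish the easy direction. Suppose $M = \oplus_i M_i$ is a content $S$-semimodule, and fix an index $j$. Since $M_j$ is a direct summand of $M$, I would use the previous theorem applied to the $S$-subsemimodule $N = M_j$ of $M$: it suffices to verify that $IM \cap M_j = IM_j$ for every ideal $I$ of $S$. This is immediate from the direct-sum decomposition, because an element of $IM$ lies in $I M_i$ in each coordinate, and if it happens to live in $M_j$ then all coordinates but the $j$-th vanish, leaving an element of $IM_j$. Hence by the theorem $M_j$ is a content $S$-semimodule with $c_{M_j}$ the restriction of $c_M$.

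For the converse, assume each $M_i$ is a content $S$-semimodule. Take $x = (x_i)_i \in \oplus_i M_i$, so only finitely many $x_i$ are nonzero, say those with $i$ in a finite set $F$. The natural candidate for $c_M(x)$ is $\sum_{i \in F} c_{M_i}(x_i)$, a finitely generated ideal of $S$. I would show $x \in \bigl(\sum_{i\in F} c_{M_i}(x_i)\bigr) M$: indeed each $x_i \in c_{M_i}(x_i) M_i \subseteq \bigl(\sum_{i\in F} c_{M_i}(x_i)\bigr)M$, and $x$ is the finite sum of these. To finish I must check that this ideal is contained in every ideal $I$ with $x \in IM$; but $x \in IM$ forces $x_i \in IM_i$ for each $i$, hence $c_{M_i}(x_i) \subseteq I$ by definition of the content function on $M_i$, and summing over $i \in F$ gives $\sum_{i\in F} c_{M_i}(x_i) \subseteq I$. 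Therefore $c_M(x) = \sum_{i\in F} c_{M_i}(x_i)$ and $x \in c_M(x) M$, so $\oplus_i M_i$ is a content $S$-semimodule.

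The only mild subtlety — and the place I would be most careful — is the interaction between infinite direct sums and ideals of $S$: I must use that elements of $\oplus_i M_i$ have finite support so that the candidate content ideal is an honest finite sum (hence finitely generated, consistent with the ``moreover'' clause of Proposition \ref{contentsemimodule}), and that the condition $x \in IM$ decouples coordinatewise into $x_i \in I M_i$. Both are routine for direct sums but would fail for direct products, so this is exactly where the hypothesis is used. No genuine obstacle is expected; the argument is a faithful transcription of the module-theoretic proof in \cite{OR}.
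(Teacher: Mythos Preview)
Your proposal is correct and is precisely what the paper intends: its own proof consists solely of the remark that one should mimic the proof of Corollary~1.4 in \cite{OR}, which is exactly what you have done (including using the preceding theorem for the forward direction and a coordinatewise content computation for the converse). Your identification of the typo is also correct.
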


This corollary implies the following corollary:

\begin{corollary}
If $S$ is a semiring and $\Lambda , \Delta$ are two index sets, then $S[X_\Lambda , {X_\Delta}^{-1}]$ is a content $S$-semimodule.
\end{corollary}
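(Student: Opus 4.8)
The plan is to reduce the statement to the previous corollary (namely that $\oplus_i M_i$ is a content $S$-semimodule iff each $M_i$ is) by observing that $S[X_\Lambda , {X_\Delta}^{-1}]$ is, as an $S$-semimodule, a direct sum of copies of $S$. Concretely, fixing a set of monomials $\{X^{\mathbf{e}}\}$ indexed by the appropriate exponent vectors $\mathbf{e}$ (finitely supported functions on $\Lambda$ with values in $\mathbb{N}_0$, together with finitely supported functions on $\Delta$ with values in $\mathbb{Z}$), every element of $S[X_\Lambda , {X_\Delta}^{-1}]$ is uniquely a finite $S$-linear combination $\sum_{\mathbf{e}} a_{\mathbf{e}} X^{\mathbf{e}}$ with $a_{\mathbf{e}} \in S$. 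This exhibits an $S$-semimodule isomorphism $S[X_\Lambda , {X_\Delta}^{-1}] \iso \oplus_{\mathbf{e}} S_{\mathbf{e}}$, where each $S_{\mathbf{e}}$ is a copy of $S$ sitting in the $X^{\mathbf{e}}$-component.

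The key steps, in order, are: first, to set up this monomial indexing carefully (this is bookkeeping but must be stated, since $\Lambda$ and $\Delta$ may be infinite — only finitely many indeterminates appear in any given polynomial, so the exponent index set is exactly the set of finitely supported functions, and the direct sum, not the direct product, is the right object); second, to check that the addition and $S$-scalar multiplication on $S[X_\Lambda , {X_\Delta}^{-1}]$ match the componentwise operations on $\oplus_{\mathbf{e}} S_{\mathbf{e}}$, which is immediate from the definition of polynomial addition and of the scalar product $s \cdot f = \lambda(s) f$ with $\lambda$ here the identity inclusion $S \hookrightarrow S[X_\Lambda , {X_\Delta}^{-1}]$; third, to invoke the preceding corollary: since each $S_{\mathbf{e}} \iso S$ is trivially a content $S$-semimodule (take $d(x) = (x)$, the principal ideal, so that $x \in IS = I$ iff $(x) \subseteq I$), the direct sum is a content $S$-semimodule, and hence so is $S[X_\Lambda , {X_\Delta}^{-1}]$. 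One should also remark, as the excerpt already did in passing, that under this identification $c_{S[X_\Lambda , {X_\Delta}^{-1}]}(f)$ is exactly the ideal generated by the coefficients of $f$, which is the content that makes the whole theory run.

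I do not expect any serious obstacle here: the proof is genuinely a corollary, and the only thing requiring a little care is the remark that one needs the direct sum rather than the direct product — i.e. that each polynomial has finite support — so that the previous corollary applies verbatim. If anything is worth a sentence of justification, it is that the preceding corollary is stated for an arbitrary family $\{M_i\}$ of $S$-semimodules, so the possibly-infinite index set of monomials causes no trouble. I would keep the write-up to two or three sentences, stating the isomorphism $S[X_\Lambda , {X_\Delta}^{-1}] \iso \oplus_{\mathbf{e}} S$, noting that $S$ is itself a content $S$-semimodule, and concluding by the previous corollary.
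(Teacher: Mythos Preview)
Your proposal is correct and matches the paper's approach exactly: the paper states this corollary immediately after the direct-sum corollary with the remark ``This corollary implies the following corollary,'' and your argument---writing $S[X_\Lambda , {X_\Delta}^{-1}] \cong \bigoplus_{\mathbf{e}} S$ as a free $S$-semimodule on the monomials and invoking the preceding result---is precisely the intended one-line deduction.
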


Now let for the moment $S$ be a Noetherian semiring, i.e. every ideal of $S$ is finitely generated, and let $\{M_i\}$ be a family of $S$-semimodules. Then one can easily check that $J \prod_i M_i = \prod_i JM_i$ for any ideal $J$ of $S$. By using this, the following assertion is obtained. The proof of this corollary is nothing but just the mimic of the Corollary 2.6 in \cite{ES}.

\begin{corollary}
Let $S$ be a Noetherian semiring and $\{M_i\}$ be a family of $S$-semimodules. Then the $S$-semimodule $\prod_i M_i$ is a content $S$-semimodule iff each $M_i$ is a content $S$-semimodule.
\end{corollary}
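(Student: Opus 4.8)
The statement is to be read as: under the Noetherian hypothesis, $\prod_i M_i$ is a content $S$-semimodule iff each $M_i$ is a content $S$-semimodule (the $M_i$ already being $S$-semimodules by assumption). The plan is to reduce everything to the characterization in part~(3) of Proposition~\ref{contentsemimodule} — an $S$-semimodule $M$ is content precisely when $\bigcap_\alpha (I_\alpha M) = (\bigcap_\alpha I_\alpha) M$ for every family $\{I_\alpha\}$ of ideals of $S$ — together with the identity $J\prod_i M_i = \prod_i (J M_i)$, valid for every ideal $J$ of $S$ because $S$ is Noetherian, hence $J$ is finitely generated and one can clear ``denominators'' coordinatewise against a fixed finite generating set of $J$. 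The inclusion $J\prod_i M_i \subseteq \prod_i(JM_i)$ is automatic; it is the reverse inclusion that consumes the finite generation.

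For the ``if'' direction, I would assume every $M_i$ is content and, given a family $\{I_\alpha\}$ of ideals of $S$, compute
\[
\bigcap_\alpha \Bigl(I_\alpha \prod_i M_i\Bigr) = \bigcap_\alpha \prod_i (I_\alpha M_i) = \prod_i \Bigl(\bigcap_\alpha I_\alpha M_i\Bigr) = \prod_i \Bigl(\bigl(\bigcap_\alpha I_\alpha\bigr) M_i\Bigr) = \Bigl(\bigcap_\alpha I_\alpha\Bigr)\prod_i M_i,
\]
where the first and last equalities are the Noetherian identity above, the middle one is the coordinatewise commutation of intersections with products, and the third one is the content hypothesis applied in each coordinate. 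By Proposition~\ref{contentsemimodule}(3), $\prod_i M_i$ is a content $S$-semimodule.

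For the ``only if'' direction, I would assume $\prod_i M_i$ is content, fix an index $j$, and check Proposition~\ref{contentsemimodule}(3) for $M_j$. Given a family $\{I_\alpha\}$, the inclusion $(\bigcap_\alpha I_\alpha)M_j \subseteq \bigcap_\alpha (I_\alpha M_j)$ is trivial, so only the reverse one is at issue. Given $x\in\bigcap_\alpha(I_\alpha M_j)$, form $\widetilde{x}\in\prod_i M_i$ with $j$-th coordinate $x$ and all other coordinates $0$; then for each $\alpha$ every coordinate of $\widetilde{x}$ lies in $I_\alpha M_i$, so $\widetilde{x}\in\prod_i (I_\alpha M_i) = I_\alpha \prod_i M_i$. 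Hence $\widetilde{x}\in\bigcap_\alpha(I_\alpha\prod_i M_i) = (\bigcap_\alpha I_\alpha)\prod_i M_i = \prod_i(\bigcap_\alpha I_\alpha)M_i$, and reading off the $j$-th coordinate gives $x\in(\bigcap_\alpha I_\alpha)M_j$. Thus $M_j$ is content.

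I do not anticipate a genuine obstacle: the only place the Noetherian hypothesis is truly needed is the identity $J\prod_i M_i = \prod_i(JM_i)$, and the point to be careful about is precisely that its nontrivial inclusion fails without a uniform bound on the number of summands needed coordinatewise, which is exactly what finite generation of $J$ supplies. Everything else — commuting intersections past products, the free inclusion $(\bigcap_\alpha I_\alpha)N\subseteq\bigcap_\alpha(I_\alpha N)$, and transporting an element of $M_j$ into $\prod_i M_i$ via the canonical injection — is routine, which is why the paper is content to say the argument mimics Corollary~2.6 of \cite{ES}.
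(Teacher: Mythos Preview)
Your proposal is correct and follows essentially the same approach that the paper intends: the paper explicitly singles out the Noetherian identity $J\prod_i M_i = \prod_i JM_i$ as the key ingredient and then defers to Corollary~2.6 of \cite{ES}, and your argument is precisely a clean unpacking of that reference via the characterization in Proposition~\ref{contentsemimodule}(3). Your reading of the statement (that ``$S$-semimodule'' should be ``content $S$-semimodule'' on the right-hand side) is also the intended one, matching the parallel direct-sum corollary just above.
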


The following corollary is implied by the above corollary:

\begin{corollary}
If $S$ is a Noetherian semiring, then $S[[X_1, X_2, \ldots , X_n]]$ is a content $S$-semimodule.
\end{corollary}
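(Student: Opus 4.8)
The plan is to derive this corollary from the immediately preceding one, which asserts that $\prod_i M_i$ is a content $S$-semimodule whenever $S$ is Noetherian and each $M_i$ is. First I would observe that the formal power series semiring $S[[X_1,\dots,X_n]]$ is, as an $S$-semimodule, isomorphic to a direct product of copies of $S$ indexed by the monomials $X_1^{a_1}\cdots X_n^{a_n}$ with $(a_1,\dots,a_n)\in\mathbb N_0^n$: a power series is exactly a choice of one coefficient in $S$ for each such monomial, with $S$-semimodule operations performed coefficientwise. Thus $S[[X_1,\dots,X_n]]\iso \prod_{\alpha\in\mathbb N_0^n} S$ as $S$-semimodules, and since $S$ is trivially a content $S$-semimodule (with $c_S(s)=(s)$, because $s\in IS=I$ iff $(s)\subseteq I$), the preceding corollary applies directly to give that this product, hence $S[[X_1,\dots,X_n]]$, is a content $S$-semimodule.

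The one point that needs a little care is to make sure the content function inherited from the product is the expected one, namely $c(f)$ equals the ideal of $S$ generated by the coefficients of the power series $f$. For a single variable this follows because, as noted in Proposition \ref{contentsemimodule}, once $M$ is a content semimodule the ideal $c_M(x)$ is automatically finitely generated and is the smallest ideal $I$ with $x\in IM$; for $f\in S[[X_1,\dots,X_n]]$, if $J$ is the (possibly infinitely generated, but finitely generated since $S$ is Noetherian) ideal generated by the coefficients of $f$, then clearly $f\in J\cdot S[[X_1,\dots,X_n]]$, and conversely if $f\in I\cdot S[[X_1,\dots,X_n]]$ then every coefficient of $f$ lies in $I$, so $J\subseteq I$; hence $J$ satisfies condition (1) of Proposition \ref{contentsemimodule} and therefore $c(f)=J$.

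The main (and only) obstacle is really just the verification that the direct-product identification is an isomorphism of $S$-semimodules, i.e. that addition and scalar multiplication of power series are performed coefficientwise — which is immediate from the definitions — and that $S$ itself is a content $S$-semimodule, which is the trivial base case. Given the preceding corollary, no further work is needed, and I would keep the proof to two or three sentences, possibly just remarking that it is implied by the previous corollary exactly as the corollary about $S[X_\Lambda,X_\Delta^{-1}]$ was implied by the direct-sum corollary. In fact, since the excerpt explicitly says the proof is "implied by the above corollary," I would simply record that $S[[X_1,\dots,X_n]]\iso\prod_{\alpha\in\mathbb N_0^n}S$ as $S$-semimodules and invoke the Noetherian product corollary with each $M_i=S$.
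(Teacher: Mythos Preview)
Your approach is correct and is exactly the one the paper intends: the paper states only that the corollary ``is implied by the above corollary,'' i.e.\ the Noetherian direct-product result, and your identification $S[[X_1,\dots,X_n]]\iso\prod_{\alpha\in\mathbb N_0^n}S$ as $S$-semimodules together with the trivial fact that $S$ is a content $S$-semimodule is precisely how that implication goes. The extra paragraph on identifying the content function with the ideal of coefficients is more than the paper spells out here (it appears later in Proposition~\ref{formalpowerseries}), but it is correct and harmless.
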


To discuss more on content semimodules may cause us to go too far away from the main purpose of the paper, so we end this section here and pass to the next section to discuss content semialgebras.

\section{Content Semialgebras and Their Prime Ideals}

Now we go further to define content semialgebras that is a generalization of polynomial semirings. Then we will see the applications of Dedekind-Mertens content formula for semialgebras over semirings. Note that if $B$ is an $S$-semialgebra and a content $S$-semimodule, then according to the definition of content semimodules, for all $f\in B$, we have $f\in c(f)B$ and therefore if $f,g\in B$, we have $f\in c(f)B$ and $g\in c(g)B$ and this implies that $fg\in c(f)c(g)B$ and finally $c(fg) \subseteq c(f)c(g)$ and so for all $m\in \mathbb N_0$, we have $c(f)^m c(fg) \subseteq c(f)^{m+1} c(g)$. Since Dedekind-Mertens content formula, i.e $c(f)^{m+1} c(g) = c(f)^m c(fg)$ has some interesting applications in commutative algebra (cf. \cite{AG}, \cite{AK}, \cite{BG}, \cite{HH}, \cite{Na}, \cite{No}, \cite{OR}, \cite{R} and \cite{T}), we are motivated to define the concept of content semialgebras similar to content algebras introduced in \cite{OR}.

\begin{definition}

\label{defcontentsemialgebra}

Let $B$ be an $S$-semialgebra. We say that $B$ is a \emph{content} $S$-semialgebra if the homomorphism $\lambda$ is injective (i.e. we can consider $S$ as a subsemiring of $B$) and there exists a function $c: B \longrightarrow \Id(S)$ such that the following conditions hold:

\begin{enumerate}

\item $f\in IB$ iff $c(f) \subseteq I$ for all ideals $I$ of $S$;

\item $c(sf) = sc(f)$ for all $s\in S$ and $f\in B$ and $c(1)= S$;

\item (Dedekind-Mertens content formula) For all $f,g \in B$ there exists an $m\in \mathbb N_0$ such that $c(f)^{m+1} c(g) = c(f)^m c(fg)$.

\end{enumerate}

\end{definition}

The first example of a content $S$-semialgebra that may come to one's mind is that of a Laurent polynomial semiring over the subtractive semiring $S$ in any number of indeterminates, i.e. the $S$-semialgebra $S[X_\Lambda , {X_\Delta}^{-1}]$ in our terminology. We gather the basic properties of content semialgebras in the following proposition:

\begin{proposition}

\label{mccoyproperty}

Let $B$ be a content $S$-semialgebra. Then the following statements hold:

\begin{enumerate}

\item $c(f) = \cap \{I: I\in \Id(S), f\in IB \}$ and $f\in c(f)B$ for all $f\in B$;
\item $c(f)$ is a finitely generated ideal of $S$ for all $f \in B$;
\item $c(fg) \subseteq c(f)c(g)$ for all $f,g \in B$;
\item $c(fg) = S$ iff $c(f) = c(g) = S$ for all $f,g \in B$;
\item If $\textbf{p}$ is a prime ideal of $S$, then $\textbf{p}B$ is a prime ideal of $B$;
\item (McCoy's Property \cite{M}) If $fg = 0$ and $g \not= 0$, then there exists a nonzero $s\in S$ such that $sf = 0$.

\end{enumerate}

\begin{proof}
$(1)$ and $(2)$ are nothing but Proposition \ref{contentsemimodule} and $(3)$ and $(4)$ are straightforward. We prove assertions $(5)$ and $(6)$.

$(5)$: As we know $\textbf{p}$ is a prime ideal of $S$ iff $IJ \subseteq \textbf{p}$ implies either $I \subseteq \textbf{p}$ or $J \subseteq \textbf{p}$ for all ideals $I,J$ of $S$. First note that since $\textbf{p}$ is a prime ideal, $\textbf{p} \not= S$. We claim that $\textbf{p}B \not= B$. On the contrary if $\textbf{p}B = B$, then $1 \in \textbf{p}B$ and therefore $S = c(1) \subseteq \textbf{p}$, a contradiction. So $\textbf{p}B \not= B$. Now let $fg \in \textbf{p}B$. Therefore $c(fg) \subseteq \textbf{p}$ and from Dedekind-Mertens content formula in the definition of content $S$-semialgebras, there exists an $m \in \mathbb N_0$ such that $c(f)^{m+1} c(g) \subseteq \textbf{p}$. Obviously this causes either $c(f) \subseteq \textbf{p}$ or $c(g) \subseteq \textbf{p}$ and this means that either $f \in \textbf{p}B$ or $g \in \textbf{p}B$ and the trueness of the statement $(5)$ is showed.

$(6)$: Consider that if $fg=0$ and $g \not= 0$, then from Dedekind-Mertens content formula in the definition of content $S$-semialgebras, there exists an $m \in \mathbb N_0$ such that $c(f)^{m+1} c(g)=(0)$. Let $t\in \mathbb N_0$ be the smallest number such that $c(f)^{t+1} c(g)=(0)$. Therefore $c(f)^t c(g) \not= (0)$ and for all $s \in c(f)^t c(g)-\{0\}$, we have $sf=0$.
\end{proof}

\end{proposition}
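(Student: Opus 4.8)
The plan is to prove all six assertions of Proposition \ref{mccoyproperty} by peeling off the easy items first and then concentrating on the two genuinely substantive claims, (5) and (6). For (1) and (2), I would simply invoke Proposition \ref{contentsemimodule}: the defining property (1) of a content semialgebra says exactly that $f\in IB$ iff $c(f)\subseteq I$, which is condition (1) of that proposition with $d=c$, so condition (2) gives $f\in c_M(f)M$ and the ``moreover'' clause gives that $c(f)$ is finitely generated; along the way one checks $c(f)=\bigcap\{I: f\in IB\}$. For (3), since $B$ is in particular a content $S$-semimodule, $f\in c(f)B$ and $g\in c(g)B$ force $fg\in c(f)c(g)B$, whence $c(fg)\subseteq c(f)c(g)$ by the defining property (1) applied to the ideal $c(f)c(g)$. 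For (4), the direction $c(f)=c(g)=S\Rightarrow c(fg)=S$ follows from Dedekind--Mertens: $c(f)^{m+1}c(g)=c(f)^m c(fg)$ with both sides collapsing appropriately when the contents are the unit ideal; conversely $c(fg)=S$ together with $c(fg)\subseteq c(f)c(g)$ forces $c(f)c(g)=S$, and since $c(f),c(g)$ are ideals of $S$ this forces each to be $S$ (an ideal containing $1$ is everything, and $c(f)c(g)=S$ means $1$ is a sum of products of elements of $c(f)$ and $c(g)$, each of which lies in both $c(f)$ and $c(g)$).

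For assertion (5) I would argue as in the write-up's natural flow: first record that a prime ideal $\textbf{p}$ of $S$ is characterized by $IJ\subseteq\textbf{p}\Rightarrow I\subseteq\textbf{p}$ or $J\subseteq\textbf{p}$ for ideals $I,J$. Then show $\textbf{p}B\neq B$: if $\textbf{p}B=B$ then $1\in\textbf{p}B$, so by defining property (1) we get $c(1)\subseteq\textbf{p}$, i.e. $S\subseteq\textbf{p}$, contradicting $\textbf{p}\neq S$. Finally, take $fg\in\textbf{p}B$; by (1) this gives $c(fg)\subseteq\textbf{p}$, and Dedekind--Mertens (property (3)) yields $m$ with $c(f)^{m+1}c(g)=c(f)^m c(fg)\subseteq\textbf{p}$; the prime characterization then forces $c(f)\subseteq\textbf{p}$ or $c(g)\subseteq\textbf{p}$, i.e. $f\in\textbf{p}B$ or $g\in\textbf{p}B$ by (1), so $\textbf{p}B$ is prime.

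For (6), McCoy's property, the key idea is again Dedekind--Mertens: from $fg=0$ we get $c(fg)=c(0)=(0)$, so there is $m$ with $c(f)^{m+1}c(g)=c(f)^m c(fg)=(0)$. Now let $t$ be the \emph{least} natural number with $c(f)^{t+1}c(g)=(0)$ (such $t$ exists since the set of such naturals is nonempty and bounded below; note $t$ could be $0$). If $t=0$ then $c(f)c(g)=(0)$ but we must still produce the witness: since $g\neq 0$, by (1) $c(g)\neq(0)$ — indeed $g\in c(g)B$, so if $c(g)=(0)$ then $g=0$ — pick $s\in c(g)\setminus\{0\}$; then $sc(f)\subseteq c(f)c(g)=(0)$ so $s\cdot(\text{every coefficient-combination of }f)=0$, and since $f\in c(f)B$ this gives $sf=0$. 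If $t\geq 1$, minimality gives $c(f)^{t}c(g)\neq(0)$, so choose $s\in c(f)^{t}c(g)\setminus\{0\}$; then $sc(f)\subseteq c(f)^{t+1}c(g)=(0)$, and as $f\in c(f)B$ we again conclude $sf=0$. The mild subtlety worth flagging: one should make sure $s\in c(f)^t c(g)$ really does annihilate $f$, which works because $f$ lies in $c(f)B$ so $f=\sum a_i f_i$ with $a_i\in c(f)$, hence $sf=\sum (sa_i)f_i$ and each $sa_i\in c(f)^{t+1}c(g)=(0)$.

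The main obstacle is essentially bookkeeping rather than depth: the only place where care is genuinely needed is unifying the $t=0$ and $t\geq1$ cases in (6) — one must not assume $c(f)^t$ with $t=0$ means anything other than the unit ideal $S$, and one must separately use $g\neq 0\Rightarrow c(g)\neq(0)$ (via $g\in c(g)B$) to guarantee the witness set is nonempty in the degenerate case. Everything else reduces to repeated application of the defining properties (1), (2), (3) of a content semialgebra together with Proposition \ref{contentsemimodule}, so I expect no serious difficulty.
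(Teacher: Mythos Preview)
Your proposal is correct and follows essentially the same route as the paper: items (1)--(2) are reduced to Proposition~\ref{contentsemimodule}, (3)--(4) are dispatched directly from the defining properties, and (5)--(6) are proved via the Dedekind--Mertens formula exactly as the paper does. You are in fact more careful than the paper in two places --- you spell out the argument for (3) and (4) rather than calling them ``straightforward,'' and in (6) you separate the case $t=0$ (where the nonemptiness of the witness set comes from $g\neq 0\Rightarrow c(g)\neq(0)$ rather than from minimality of $t$) and explicitly justify $sf=0$ via $f\in c(f)B$; the paper glosses over both of these points.
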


Now we give a general theorem on minimal prime ideals in semialgebras. One of the results of this theorem is that in content semialgebras, minimal primes extend to minimal primes and, more precisely, there is actually a correspondence between the minimal primes of the semiring and their extensions in the semialgebra. Note that if $S$ and $B$ are two semirings and $\lambda: S \longrightarrow B$ is a semiring homomorphism and $P$ is an (a prime) ideal of $B$, then its contract $\lambda^{-1} (P)$, denoted by $P \cap S$, is also (a prime) an ideal of $S$.

\begin{theorem}

\label{Minimalprimes1}

 Let $B$ be an $S$-semialgebra with the following properties:

\begin{enumerate}
 \item For each prime ideal $\textbf{p}$ of $S$, the extended ideal $\textbf{p}B$ of $B$ is prime;
 \item For each prime ideal  $\textbf{p}$ of $S$, $\textbf{p}B \cap S = \textbf{p}$.
\end{enumerate}

Then the function $ \phi : \Min(S) \longrightarrow \Min(B)$ given by $\textbf{p} \longrightarrow \textbf{p}B$ is a bijection.

\begin{proof}
First we prove that if $\textbf{p}$ is a minimal prime ideal of $S$, then $\textbf{p}B$ is also a minimal prime ideal of $B$. Let $Q$ be a prime ideal of $B$ such that $Q \subseteq \textbf{p}B$. So $Q \cap S \subseteq \textbf{p}B \cap S = \textbf{p}$. Since $\textbf{p}$ is a minimal prime ideal of $S$, we have $Q \cap S =\textbf{p}$ and therefore $ Q = \textbf{p}B $. This means that $ \phi $ is a well-defined function. Obviously the second condition causes $ \phi $ to be one-to-one. The next step is to prove that $ \phi $ is onto. For showing this, consider $Q \in \Min(B)$, so $Q \cap S$ is a prime ideal of $S$ such that $(Q \cap S)B \subseteq Q$ and therefore $(Q \cap S)B = Q$. Our claim is that $(Q \cap S)$ is a minimal prime ideal of $S$. Suppose $\textbf{p}$ is a prime ideal of $S$ such that $ \textbf{p} \subseteq Q \cap S$, then $\textbf{p}B \subseteq Q$ and since $Q$ is a minimal prime ideal of $B$, $\textbf{p}B = Q = (Q \cap S)B$ and therefore $\textbf{p} = Q \cap S$.
\end{proof}

\end{theorem}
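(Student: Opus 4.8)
The plan is to establish that $\phi$ is well-defined, injective, and surjective, using only hypotheses (1) and (2) together with the elementary fact recorded just before the theorem, namely that the contraction $Q\cap S$ of a prime ideal $Q$ of $B$ is a prime ideal of $S$. In every step the engine is the same: extend a prime of $S$ up to $B$ (where hypothesis (1) keeps it prime, in particular proper) and contract a prime of $B$ down to $S$ via hypothesis (2) to recover what we started with.

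For well-definedness, given $\textbf{p}\in\Min(S)$, hypothesis (1) already gives that $\textbf{p}B$ is prime, so the only thing to check is minimality. I would take an arbitrary prime $Q$ of $B$ with $Q\subseteq\textbf{p}B$, contract to get $Q\cap S\subseteq\textbf{p}B\cap S=\textbf{p}$ by hypothesis (2), and then invoke minimality of $\textbf{p}$ in $S$ to conclude $Q\cap S=\textbf{p}$; since $Q$ is an ideal of $B$ we have $\textbf{p}B=(Q\cap S)B\subseteq Q\subseteq\textbf{p}B$, forcing $Q=\textbf{p}B$. Injectivity is then immediate: if $\textbf{p}B=\textbf{q}B$ with $\textbf{p},\textbf{q}\in\Min(S)$, contracting and applying hypothesis (2) to both sides gives $\textbf{p}=\textbf{p}B\cap S=\textbf{q}B\cap S=\textbf{q}$.

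For surjectivity, I would start from $Q\in\Min(B)$ and set $\textbf{p}:=Q\cap S$, which is prime in $S$. Since $\textbf{p}\subseteq Q$, we get $\textbf{p}B\subseteq Q$; but $\textbf{p}B$ is prime by hypothesis (1), so minimality of $Q$ forces $\textbf{p}B=Q$, i.e. $\phi(\textbf{p})=Q$. It then remains to verify $\textbf{p}\in\Min(S)$: if $\textbf{q}$ is a prime of $S$ with $\textbf{q}\subseteq\textbf{p}$, then $\textbf{q}B\subseteq\textbf{p}B=Q$ with $\textbf{q}B$ prime by hypothesis (1), so minimality of $Q$ gives $\textbf{q}B=Q=\textbf{p}B$, and contracting via hypothesis (2) yields $\textbf{q}=\textbf{p}$.

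The computations here are all short once both hypotheses are available; the only thing demanding care is the bookkeeping of this repeated ``extend, then contract'' passage, making sure each extension $\textbf{p}B$ is invoked as prime (hypothesis (1)) before applying minimality of the relevant prime of $B$, and each contraction is reconciled using hypothesis (2). I expect the surjectivity step to be the main obstacle, specifically the sub-claim that $Q\cap S$ is \emph{minimal} rather than merely prime, since that is the one place where both hypotheses must be chained together inside a single argument.
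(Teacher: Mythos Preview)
Your proof is correct and follows essentially the same route as the paper's: the well-definedness, injectivity, and surjectivity arguments all match the paper's, including the specific way minimality of $Q\cap S$ is deduced in the surjectivity step. If anything, you are slightly more explicit than the paper in spelling out the inclusion $(Q\cap S)B\subseteq Q$ during the well-definedness argument and in invoking hypothesis~(1) before each appeal to minimality in $B$.
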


Let $S$ be a semiring. An element $s\in S$ is said to be nilpotent if $s^n = 0$ for some $n\in \mathbb N$. The set of all nilpotent elements of the semiring $S$ is called the \emph{lower nil radical} of $S$ and in this paper, we denote that by $\Nil (S)$. A semiring $S$ is said to be reduced if $\Nil(S) = (0)$. Note that by \cite[Proposition 7.28 (Krull's Theorem)]{G}, we have $\Nil(S) = \bigcap_{\textbf{p}\in \Min(S)} \textbf{p}$. Now we can easily prove the following:

\begin{corollary}
Let the $S$-semialgebra $B$ be a content $S$-semimodule such that the following statements hold:

\begin{enumerate}
 \item For each prime ideal $\textbf{p}$ of $S$, the extended ideal $\textbf{p}B$ of $B$ is prime;
 \item For each prime ideal  $\textbf{p}$ of $S$, $\textbf{p}B \cap S = \textbf{p}$.
\end{enumerate}

Then $\Nil(B) = \Nil(S)B$. Particularly $B$ is reduced iff $S$ is reduced.

\begin{proof}
By considering Corollary \ref{Minimalprimes1} and Proposition \ref{contentsemimodule}, we have: $\Nil(B) = \bigcap_{P\in \Min(B)} P = \bigcap_{\textbf{p}\in \Min(S)} \textbf{p}B = (\bigcap_{\textbf{p}\in \Min(S)} \textbf{p})B = \Nil(S)B$.
\end{proof}

\end{corollary}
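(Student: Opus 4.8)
The plan is to derive the identity from Theorem~\ref{Minimalprimes1} together with the distributivity of ideal extension over arbitrary intersections that characterises content semimodules (Proposition~\ref{contentsemimodule}), using Krull's theorem (\cite[Proposition 7.28]{G}) to write each nilradical as an intersection of minimal primes.

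First I would note that the two numbered hypotheses imposed on $B$ are exactly conditions $(1)$ and $(2)$ of Theorem~\ref{Minimalprimes1}, so that theorem applies directly: the map $\phi\colon\Min(S)\to\Min(B)$, $\textbf{p}\mapsto\textbf{p}B$, is a bijection, and in particular $\Min(B)=\{\textbf{p}B:\textbf{p}\in\Min(S)\}$. Since $1\neq 0$ in $S$ (and hence, $\lambda$ being injective, in $B$), both $\Min(S)$ and $\Min(B)$ are non-empty, so Krull's theorem applied to the zero ideal gives $\Nil(S)=\bigcap_{\textbf{p}\in\Min(S)}\textbf{p}$ and $\Nil(B)=\bigcap_{P\in\Min(B)}P=\bigcap_{\textbf{p}\in\Min(S)}\textbf{p}B$.

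Next I would bring in the hypothesis that $B$ is a content $S$-semimodule. By the equivalence of conditions $(2)$ and $(3)$ in Proposition~\ref{contentsemimodule}, this says precisely that $\bigcap_\alpha(I_\alpha B)=(\bigcap_\alpha I_\alpha)B$ for every family $\{I_\alpha\}$ of ideals of $S$. Applying this to the family $\{\textbf{p}\}_{\textbf{p}\in\Min(S)}$ yields $\bigcap_{\textbf{p}\in\Min(S)}\textbf{p}B=\bigl(\bigcap_{\textbf{p}\in\Min(S)}\textbf{p}\bigr)B=\Nil(S)B$, and combining this with the previous equalities gives $\Nil(B)=\Nil(S)B$.

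Finally, for the ``particularly'' clause: if $S$ is reduced then $\Nil(S)=(0)$, so $\Nil(S)B=(0)$ and $B$ is reduced; conversely, identifying $S$ with its image under the injective $\lambda$, every $a\in\Nil(S)$ equals $a\cdot 1\in\Nil(S)B$, so $\Nil(B)=(0)$ forces $\Nil(S)=(0)$. I do not expect a genuine obstacle here: both main ingredients are tailored to this hypothesis set, so the only points requiring a little care are confirming that the $S$-semimodule structure on $B$ invoked by the content hypothesis is the one induced by $\lambda$, and that the intersection in Krull's theorem ranges over the non-empty set $\Min(S)$.
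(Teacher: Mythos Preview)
Your proposal is correct and follows essentially the same route as the paper's own proof: invoke Theorem~\ref{Minimalprimes1} to identify $\Min(B)$ with $\{\textbf{p}B:\textbf{p}\in\Min(S)\}$, use Krull's theorem to express each nilradical as an intersection of minimal primes, and then apply the intersection-distributivity in Proposition~\ref{contentsemimodule} to pull the extension outside. Your write-up is simply more explicit than the paper's one-line chain of equalities, and you additionally spell out the ``particularly'' clause, which the paper leaves to the reader.
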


\begin{corollary}

\label{Minimalprimes2}

 Let $B$ be a content $S$-semialgebra. Then the following statements hold:
 \begin{enumerate}
 \item The function $ \phi : \Min(S) \longrightarrow \Min(B)$ given by $\textbf{p} \longrightarrow \textbf{p}B$ is a bijection.
 \item $\Nil(B) = \Nil(S)B$. Particularly $B$ is reduced iff $S$ is reduced.
 \end{enumerate}

\end{corollary}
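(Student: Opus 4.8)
The plan is to deduce both assertions directly from Theorem \ref{Minimalprimes1} and the corollary stated immediately after it, so the only real work is to verify that a content $S$-semialgebra $B$ meets the hypotheses of those two results: that $\textbf{p}B$ is prime and that $\textbf{p}B \cap S = \textbf{p}$ for every prime ideal $\textbf{p}$ of $S$, together with the observation that $B$ is a content $S$-semimodule.

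First I would note that $B$ is indeed a content $S$-semimodule: Proposition \ref{mccoyproperty}(1) asserts $f \in c(f)B$ and $c(f) = \bigcap\{I \in \Id(S) : f \in IB\}$ for every $f \in B$, which is precisely the defining property of a content semimodule with content function $c = c_B$. The first hypothesis of Theorem \ref{Minimalprimes1} — that $\textbf{p}B$ is prime for each prime $\textbf{p}$ of $S$ — is exactly Proposition \ref{mccoyproperty}(5).

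The one point requiring a brief argument is the second hypothesis, $\textbf{p}B \cap S = \textbf{p}$. Since $\lambda$ is injective we view $S$ as a subsemiring of $B$, and the inclusion $\textbf{p} \subseteq \textbf{p}B \cap S$ is immediate. For the reverse inclusion, take $s \in \textbf{p}B \cap S$. By condition (1) of Definition \ref{defcontentsemialgebra}, $s \in \textbf{p}B$ forces $c(s) \subseteq \textbf{p}$; by condition (2), $c(s) = c(s \cdot 1) = s\,c(1) = sS = (s)$. Hence $(s) \subseteq \textbf{p}$, i.e.\ $s \in \textbf{p}$, which gives $\textbf{p}B \cap S \subseteq \textbf{p}$.

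Once both hypotheses are in place, part (1) is just Theorem \ref{Minimalprimes1}, and part (2), together with the ``$B$ reduced iff $S$ reduced'' statement, follows from the corollary placed right after Theorem \ref{Minimalprimes1}, applied to the content $S$-semimodule $B$. I do not anticipate a genuine obstacle: the statement is essentially a repackaging of earlier results, and the only computation involved, $c(s) = (s)$ for $s \in S$, is a one-line consequence of the content-semialgebra axioms.
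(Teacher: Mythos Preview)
Your proposal is correct and follows precisely the route the paper intends: the paper states Corollary \ref{Minimalprimes2} without proof, relying on the reader to observe that a content $S$-semialgebra satisfies the two hypotheses of Theorem \ref{Minimalprimes1} and the corollary following it, and your verification of $\textbf{p}B \cap S = \textbf{p}$ via $c(s) = s\,c(1) = (s)$ is exactly the missing detail. There is nothing to add.
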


\begin{corollary}
 Let $S$ be a subtractive semiring and $\Lambda , \Delta$ be two index sets. Then the following statements hold:
 \begin{enumerate}
 \item The function $ \phi : \Min(S) \longrightarrow \Min(S[X_\Lambda , {X_\Delta}^{-1}])$ given by $\textbf{p} \longrightarrow \textbf{p}(S[X_\Lambda, {X_\Delta}^{-1}])$ is a bijection;
\item $\Nil(S[X_\Lambda , {X_\Delta}^{-1}]) = \Nil(S).S[X_\Lambda , {X_\Delta}^{-1}]$. Particularly $S[X_\Lambda , {X_\Delta}^{-1}]$ is reduced iff $S$ is reduced.
\end{enumerate}

\end{corollary}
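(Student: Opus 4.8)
The plan is to deduce this corollary directly from Corollary \ref{Minimalprimes2} by showing that, when $S$ is subtractive, the Laurent polynomial semialgebra $B = S[X_\Lambda , {X_\Delta}^{-1}]$ is a content $S$-semialgebra in the sense of Definition \ref{defcontentsemialgebra}. Once that is established, part (1) is exactly the first assertion of Corollary \ref{Minimalprimes2} applied to $B$, and part (2) is the second assertion of that corollary together with the fact that a semiring is reduced precisely when its lower nil radical vanishes.

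So the substance of the argument is to verify the three axioms of a content semialgebra for $B$, plus injectivity of the structure map. Injectivity of $\lambda : S \to B$ is clear since $S$ sits inside $B$ as the constant polynomials. As the content function I would take $c \colon B \to \Id(S)$ sending a Laurent polynomial to the ideal of $S$ generated by its (finitely many) coefficients; as already observed after Proposition \ref{contentsemimodule}, this $c$ agrees with $c_B$. Axiom (1), that $f \in IB$ holds iff $c(f) \subseteq I$ for every ideal $I$ of $S$, follows by inspecting $IB = \{\sum_i a_i f_i : a_i \in I,\ f_i \in B\}$: a Laurent polynomial lies in $IB$ exactly when each of its coefficients lies in $I$. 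Axiom (2), namely $c(sf) = s\,c(f)$ and $c(1) = S$, is immediate from part (3) of Proposition \ref{contentformulas}. Axiom (3) is the Dedekind--Mertens content formula, and this is where subtractivity of $S$ enters: since $S$ is a subtractive $S$-semimodule, Theorem \ref{dedekindmertens4} guarantees that for all $f,g \in B$ there is an $m \in \mathbb{N}_0$ with $c(f)^{m+1}c(g) = c(f)^m c(fg)$.

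One point deserving a word of care is that $\Lambda$ and $\Delta$ are allowed to be arbitrary (infinite) index sets, and the corollary does not even require $\Lambda \cup \Delta \neq \emptyset$. This causes no difficulty: every element of $S[X_\Lambda , {X_\Delta}^{-1}]$ already lies in a Laurent subsemialgebra in finitely many of the indeterminates, so $c(f)$ is still finitely generated and all three axioms reduce to the finitely-many-variables situation covered by Theorem \ref{dedekindmertens4}; and if $\Lambda \cup \Delta = \emptyset$ then $B = S$ and the statement is trivial. Consequently the only genuinely nontrivial input is the Dedekind--Mertens formula of Section 1, and I do not expect any real obstacle beyond this bookkeeping --- feeding $B$ into Corollary \ref{Minimalprimes2} then yields both (1) and (2) at once.
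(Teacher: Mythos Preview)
Your proposal is correct and follows essentially the same route as the paper: the corollary is stated there without proof because, as remarked immediately after Definition \ref{defcontentsemialgebra}, $S[X_\Lambda , {X_\Delta}^{-1}]$ is a content $S$-semialgebra whenever $S$ is subtractive (via Theorem \ref{dedekindmertens4}), so Corollary \ref{Minimalprimes2} applies directly. Your verification of the three axioms and your handling of the degenerate case $\Lambda \cup \Delta = \emptyset$ simply make explicit what the paper leaves implicit.
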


\section{Weak Content Semialgebras}

The proof of Theorem \ref{WDMcriteria1} in section 4 shows us that the content formula $c(f)c(g) \subseteq \sqrt {c(fg)}$ for polynomial semirings in a single indeterminate is important. The concept of weak content algebras introduced in \cite{R} satisfies the same content formula and is shown to be an interesting generalization of formal power series rings. All of these inspire us to introduce weak content semialgebras. Later in this section we will give more interesting examples of weak content semialgebras. First we define weak content semialgebras:

\begin{definition}

\label{weakcontentsemialgebra}

Let $S$ be a semiring and $B$ be an $S$-semialgebra. We say $B$ is a weak content $S$-semialgebra if the following conditions holds:

\begin{enumerate}

\item $B$ is a content $S$-semimodule;

\item $c(f)c(g) \subseteq \sqrt {c(fg)}$, for all $f,g \in B$.

\end{enumerate}

\end{definition}

Note that if $B$ is a weak content $S$-semialgebra, then by condition (1) of Definition \ref{weakcontentsemialgebra}, $c(fg) \subseteq c(f)c(g)$ for all $f,g \in B$. Now we proceed to bring the semialgebra version of \cite[Theorem 1.2]{R}:

\begin{proposition}

\label{weakcontentformula1}

Let the $S$-semialgebra $B$ be a content $S$-semimodule. Then the following statements are equivalent:

\begin{enumerate}
\item $c(f)c(g) \subseteq \sqrt {c(fg)}$, for all $f,g \in B$,
\item For each prime ideal $\textbf{p}$ of $S$, either $\textbf{p}B$ is a prime ideal of $B$ or $\textbf{p}B=B$.
\end{enumerate}

\begin{proof}
$(1) \rightarrow (2)$: Let $\textbf{p}$ be a prime ideal of $R$ such that $\textbf{p}B \not= B$. If $fg\in \textbf{p}B$, then $c(fg) \subseteq \textbf{p}$ and so $c(f)c(g) \subseteq \textbf{p}$. Thus either $c(f) \subseteq \textbf{p}$ or $c(g) \subseteq \textbf{p}$ and this means either $f\in \textbf{p}B$ or $g\in \textbf{p}B$. So we have already showed that $\textbf{p}B$ is a prime ideal of $B$.

$(2) \rightarrow (1)$: Let $f,g \in B$ such that $c(fg) \subseteq \textbf{p}$, where $\textbf{p}$ is a prime ideal of $S$. Obviously $fg \in \textbf{p}B$. Then $(2)$ implies that either $f\in \textbf{p}B$ or $g\in \textbf{p}B$ and this means that either $c(f) \subseteq \textbf{p}$ or $c(g) \subseteq \textbf{p}$ and in any case $c(f)c(g) \subseteq \textbf{p}$. Consequently $c(f)c(g) \subseteq \bigcap_{\textbf{p}\in \Spec_{c(fg)}(S)} \textbf{p} = \sqrt {c(fg)}$ and this is what we wanted to show.
\end{proof}
\end{proposition}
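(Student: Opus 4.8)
The plan is to shuttle between $S$ and $B$ by means of the defining feature of a content $S$-semimodule, namely that $f\in IB$ if and only if $c(f)\subseteq I$ for every ideal $I$ of $S$ (this is the characterization in Proposition \ref{contentsemimodule}, applied with $d=c_B$), together with Krull's theorem \cite[Proposition 7.28]{G}, which expresses $\sqrt J=\bigcap_{\textbf{p}\in\Spec_J(S)}\textbf{p}$ for any ideal $J$ of $S$. With these two facts in hand, each implication becomes a short computation with a single prime ideal.

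For $(1)\rightarrow(2)$ I would fix a prime ideal $\textbf{p}$ of $S$ with $\textbf{p}B\neq B$ and verify that $\textbf{p}B$ is prime. If $fg\in\textbf{p}B$, the content characterization gives $c(fg)\subseteq\textbf{p}$; since a prime ideal is radical, $\sqrt{c(fg)}\subseteq\textbf{p}$, so hypothesis $(1)$ yields $c(f)c(g)\subseteq\sqrt{c(fg)}\subseteq\textbf{p}$, and primeness of $\textbf{p}$ forces $c(f)\subseteq\textbf{p}$ or $c(g)\subseteq\textbf{p}$, i.e. $f\in\textbf{p}B$ or $g\in\textbf{p}B$. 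Combined with $\textbf{p}B\neq B$, this says $\textbf{p}B$ is a prime ideal of $B$.

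For $(2)\rightarrow(1)$ I would fix $f,g\in B$ and, by Krull's theorem, reduce to showing $c(f)c(g)\subseteq\textbf{p}$ for every prime $\textbf{p}$ of $S$ with $c(fg)\subseteq\textbf{p}$. For such a $\textbf{p}$ we have $fg\in\textbf{p}B$, and $(2)$ leaves two cases: if $\textbf{p}B=B$ then $f\in B=\textbf{p}B$, so $c(f)\subseteq\textbf{p}$; if $\textbf{p}B$ is prime then $f\in\textbf{p}B$ or $g\in\textbf{p}B$, i.e. $c(f)\subseteq\textbf{p}$ or $c(g)\subseteq\textbf{p}$. Either way $c(f)c(g)\subseteq\textbf{p}$, which is what is needed.

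There is no real obstacle; the proposition is a dictionary translation through the content function, exactly parallel to \cite[Theorem 1.2]{R}. The only points that require attention are: invoking that prime ideals are radical so that $c(fg)\subseteq\textbf{p}$ upgrades to $\sqrt{c(fg)}\subseteq\textbf{p}$; remembering to dispose of the degenerate alternative $\textbf{p}B=B$ in the second implication (which is immediate, since then $f$ automatically lies in $\textbf{p}B$); and noting that the hypothesis that $B$ is a content $S$-semimodule, not merely an $S$-semialgebra, is what licenses the equivalence $f\in IB\Leftrightarrow c(f)\subseteq I$ on which everything rests.
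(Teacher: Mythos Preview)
Your proof is correct and follows essentially the same route as the paper's own argument: both directions shuttle through the equivalence $f\in IB\Leftrightarrow c(f)\subseteq I$ and then invoke Krull's description of the radical. The only difference is that you explicitly dispose of the degenerate alternative $\textbf{p}B=B$ in $(2)\rightarrow(1)$, whereas the paper leaves this case implicit (it is harmless, since then $f\in\textbf{p}B$ trivially).
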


\begin{corollary}

\label{weakcontentformula2}

Let $B$ be a content $S$-semialgebra. Then $c(fg) \subseteq c(f)c(g) \subseteq \sqrt {c(fg)}$, for all $f,g \in B$.

\begin{proof}
In content semialgebras, primes extend to primes (Proposition \ref{mccoyproperty}).
\end{proof}
\end{corollary}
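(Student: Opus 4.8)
The plan is to reduce the statement to two facts already established in the excerpt: Proposition \ref{mccoyproperty}, which collects the basic properties of content semialgebras, and Proposition \ref{weakcontentformula1}, which characterizes the weak content formula $c(f)c(g)\subseteq\sqrt{c(fg)}$ in terms of how prime ideals of $S$ extend to $B$. The inclusion $c(fg)\subseteq c(f)c(g)$ will require nothing new, while the inclusion $c(f)c(g)\subseteq\sqrt{c(fg)}$ is the point where the two propositions must be combined.

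First I would dispatch the easy inclusion: $c(fg)\subseteq c(f)c(g)$ for all $f,g\in B$ is precisely part (3) of Proposition \ref{mccoyproperty} (alternatively, it follows from $f\in c(f)B$ and $g\in c(g)B$, which is part (1), since then $fg\in c(f)c(g)B$, whence $c(fg)\subseteq c(f)c(g)$). Then I would turn to the second inclusion and verify that $B$ satisfies the hypotheses of Proposition \ref{weakcontentformula1}. By Definition \ref{defcontentsemialgebra} together with part (1) of Proposition \ref{mccoyproperty}, we have $f\in c(f)B$ for every $f\in B$, so $B$ is a content $S$-semimodule, which is the standing hypothesis of Proposition \ref{weakcontentformula1}. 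Condition (2) of that proposition — for each prime ideal $\textbf{p}$ of $S$, either $\textbf{p}B$ is a prime ideal of $B$ or $\textbf{p}B=B$ — is exactly the content of part (5) of Proposition \ref{mccoyproperty}: in a content semialgebra, primes extend to primes. Hence Proposition \ref{weakcontentformula1} applies and yields $c(f)c(g)\subseteq\sqrt{c(fg)}$, which together with the first inclusion completes the proof.

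I do not expect any genuine obstacle here; the work is entirely bookkeeping. The only point that needs care is the chain of dependencies: the ``primes extend to primes'' property (Proposition \ref{mccoyproperty}(5)) is itself derived from the full Dedekind--Mertens content formula in the definition of a content semialgebra, so one should make explicit that a content $S$-semialgebra supplies both the content-$S$-semimodule structure needed to invoke Proposition \ref{weakcontentformula1} and the stronger axiom needed to guarantee that each $\textbf{p}B$ is prime. Once that is noted, the corollary follows in a single line, as indicated.
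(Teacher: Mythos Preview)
Your proposal is correct and is essentially the same as the paper's own proof: the paper simply notes that in content semialgebras primes extend to primes (Proposition \ref{mccoyproperty}(5)), which is precisely the hypothesis needed to invoke Proposition \ref{weakcontentformula1}, while the inclusion $c(fg)\subseteq c(f)c(g)$ is Proposition \ref{mccoyproperty}(3). You have just made the bookkeeping explicit.
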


 Corollary \ref{weakcontentformula2} shows that content semialgebras are weak content semialgebras, but the converse is not true. For example, if $R$ is a Noetherian ring, then $R[[X_1,X_2,\ldots,X_n]]$ is a weak content $R$-algebra and obviously a weak content $R$-semialgebra. In fact, Epstein and Shapiro, as the main theorem of their paper \cite[Theorem 2.6]{ESh}, prove that if $R$ is a Noetherian ring, then the Dedekind-Mertens content formula holds for $R[[X]]$, while for general commutative rings, there is no exponent for which the content formula holds in power series rings as they show in \cite[Example 4.1]{ESh}.

Let us remind that a celebrated theorem in semiring theory says that if $P$ is an ideal of a semiring $S$ and $X$ is an indeterminate over $S$, then $P[X]$ is a prime ideal of $S[X]$ iff $P$ is a subtractive prime ideal of $S$ (\cite[Theorem 2.6]{L}). By using Lemma \ref{supppoly}, we show that the same statement holds for the Laurent polynomial semirings in an arbitrary number of indeterminates.

\begin{theorem}

\label{primeextension}

Let $P$ be an ideal of a semiring $S$ and $\Lambda , \Delta$ be two index sets such that $\Lambda \cup \Delta \not= \emptyset$. Then $P[X_\Lambda , {X_\Delta}^{-1}]$ is a prime ideal of $S[X_\Lambda , {X_\Delta}^{-1}]$ iff $P$ is a subtractive prime ideal of $S$.

\begin{proof}
$(\rightarrow)$: Suppose that $P[X_\Lambda , {X_\Delta}^{-1}]$  is a prime ideal of $S[X_\Lambda , {X_\Delta}^{-1}]$ and $a,b\in S$ such that $ab\in P$. Then either $a\in P[X_\Lambda , {X_\Delta}^{-1}]$ or $b\in P[X_\Lambda , {X_\Delta}^{-1}]$. But $P[X_\Lambda , {X_\Delta}^{-1}] \cap S = P$ and so we have proved that $P$ is a prime ideal of $S$. Now suppose that $a,b\in S$ such that $a+b,a \in P$ and $X \in X_\Lambda \cup {X_\Delta}^{-1}$ is an indeterminate. Put $f=a+bX$ and $g=b+(a+b)X$. We have $fg= ab+(a^2+ab+b^2)X+(ab+b^2)X^2$ and so $fg\in P[X_\Lambda , {X_\Delta}^{-1}]$. But since $P[X_\Lambda , {X_\Delta}^{-1}]$ is prime, either $f\in P[X_\Lambda , {X_\Delta}^{-1}]$ or $g\in P[X_\Lambda , {X_\Delta}^{-1}]$ and in each case, $b\in P$ and so we have showed that $P$ is subtractive.

$(\leftarrow)$: Suppose that $P$ is a subtractive prime ideal of $S$. Obviously in order to show that $P[X_\Lambda , {X_\Delta}^{-1}]$ is a prime ideal of $S[X_\Lambda , {X_\Delta}^{-1}]$, we need to prove that if $X_1,X_2, \ldots, X_n \in X_\Lambda \cup {X_\Delta}^{-1}$, then $P[X_1,X_2, \ldots, X_n]$ is a prime ideal of $S[X_1,X_2, \ldots, X_n]$. Our proof is by induction on $n$. For $n=1$, the result follows from \cite[Theorem 2.6]{L}. If the result is true for $n=k$ and $f,g\in S[X_1, \ldots, X_k, X_{k+1}]$ such that $fg\in P[X_1, \ldots, X_k, X_{k+1}]$, then without loss of generality, we may assume that both $f$ and $g$ are nonzero polynomials and so Lemma \ref{supppoly} implies the existence of $f^*, g^* \in S[X_1, \ldots, X_k]$ such that $c(f) = c(f^*)$,  $c(g) = c(g^*)$ and $c(fg)=c(f^* g^*)$. Since $fg\in P[X_1, \ldots X_k, X_{k+1}]$, we have $c(fg) \subseteq P$ and so $c(f^* g^*) \subseteq P$. Thus $f^* g^* \in P[X_1,X_2, \ldots, X_k]$. But by induction's assumption, $P[X_1,X_2, \ldots, X_k]$ is a prime ideal of $S[X_1,X_2, \ldots, X_k]$, so either $f^* \in P[X_1,X_2, \cdots, X_k]$ or $g^* \in P[X_1,X_2, \ldots, X_k]$. This implies that either $c(f^*) \subseteq P$ or $c(g^*) \subseteq P$. Thus either $c(f) \subseteq P$ or $c(g) \subseteq P$. This means that either $f\in P[X_1, \ldots, X_k, X_{k+1}]$ or $g\in P[X_1, \ldots, X_k, X_{k+1}]$ and the proof is complete.
\end{proof}

\end{theorem}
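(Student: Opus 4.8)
The plan is to prove the two implications separately, using the single-indeterminate case \cite[Theorem 2.6]{L} as the engine and Lemma \ref{supppoly} to bootstrap it to arbitrarily many indeterminates. The hypothesis $\Lambda \cup \Delta \neq \emptyset$ enters only to guarantee that at least one indeterminate is available.

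\textbf{Forward direction.} Assume $P[X_\Lambda , {X_\Delta}^{-1}]$ is prime. First I would check $P$ is prime: if $a,b \in S$ and $ab \in P$, then viewing $a,b$ as constant Laurent polynomials, primeness puts $a$ or $b$ into $P[X_\Lambda , {X_\Delta}^{-1}]$, and since $P[X_\Lambda , {X_\Delta}^{-1}] \cap S = P$ (a constant lying in the extension ideal has its sole coefficient in $P$), we conclude $a \in P$ or $b \in P$. Next, subtractivity: given $a,b \in S$ with $a+b \in P$ and $a \in P$, fix an indeterminate $X \in X_\Lambda \cup {X_\Delta}^{-1}$ and put $f = a + bX$, $g = b + (a+b)X$. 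Then $fg = ab + (a^2 + ab + b^2)X + (ab + b^2)X^2$; since $a \in P$ and $a+b \in P$, every coefficient lies in $P$, so $fg \in P[X_\Lambda , {X_\Delta}^{-1}]$. Primeness forces $f$ or $g$ into $P[X_\Lambda , {X_\Delta}^{-1}]$, and in either case $b$ appears (up to a summand in $P$) as a coefficient, whence $b \in P$.

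\textbf{Backward direction.} Assume $P$ is a subtractive prime ideal of $S$. Since the prime condition concerns pairs of elements, and any two elements of $S[X_\Lambda , {X_\Delta}^{-1}]$ together involve only finitely many indeterminates $X_1, \dots, X_n \in X_\Lambda \cup {X_\Delta}^{-1}$, it suffices to show each $P[X_1, \dots, X_n]$ is prime in $S[X_1, \dots, X_n]$. I would induct on $n$. The base $n=1$ is \cite[Theorem 2.6]{L}. For the step, take $f,g \in S[X_1, \dots, X_{k+1}]$ with $fg \in P[X_1, \dots, X_{k+1}]$, which we may assume nonzero. Lemma \ref{supppoly} supplies $f^{*}, g^{*} \in S[X_1, \dots, X_k]$ with $c(f) = c(f^{*})$, $c(g) = c(g^{*})$ and $c(fg) = c(f^{*} g^{*})$. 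From $fg \in P[X_1, \dots, X_{k+1}]$ we get $c(fg) \subseteq P$, hence $c(f^{*} g^{*}) \subseteq P$, i.e. $f^{*} g^{*} \in P[X_1, \dots, X_k]$; the inductive hypothesis yields $c(f^{*}) \subseteq P$ or $c(g^{*}) \subseteq P$, hence $c(f) \subseteq P$ or $c(g) \subseteq P$, i.e. $f$ or $g$ lies in $P[X_1, \dots, X_{k+1}]$.

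The step I expect to be the main obstacle is the inductive one, namely invoking Lemma \ref{supppoly} correctly: one must choose the substitution exponent $m$ simultaneously larger than $\deg_n f$, $\deg_n g$ and $\deg_n (fg)$ so that all three of $f$, $g$, $fg$ retain their supports and the substitution is multiplicative, $(fg)^{*} = f^{*} g^{*}$, and one must observe that ``same support'' upgrades to ``same content'' because $c(\cdot)$ depends only on the set of nonzero coefficients. A minor secondary point worth checking is the identity $P[X_1, \dots, X_n] \cap S = P$ used in the forward direction, which is immediate once one describes the extension ideal as the set of Laurent polynomials all of whose coefficients lie in $P$.
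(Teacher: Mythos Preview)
Your proposal is correct and follows essentially the same approach as the paper: the forward direction uses the identical polynomials $f=a+bX$ and $g=b+(a+b)X$ to establish subtractivity, and the backward direction is the same induction on the number of indeterminates with Lemma~\ref{supppoly} driving the step and \cite[Theorem 2.6]{L} as the base. Your remarks about the care needed in applying Lemma~\ref{supppoly} (choosing $m$ large enough for $f$, $g$, and $fg$ simultaneously, and that equal supports give equal contents) are accurate and make explicit what the paper leaves implicit.
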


The next theorem is a generalization of Theorem \ref{WDMcriteria1} in section 4 and another good family of weak content semialgebras:

\begin{theorem}

\label{WDMcriteria2}

Let $S$ be a semiring and $\Lambda , \Delta$ be two index sets such that $\Lambda \cup \Delta \not= \emptyset$. Then the following statements are equivalent:

\begin{enumerate}

\item $c(fg) \subseteq c(f)c(g) \subseteq \sqrt {c(fg)}$, for all $f,g \in S[X_\Lambda , {X_\Delta}^{-1}]$,
\item $\sqrt I$ is subtractive for each ideal $I$ of the semiring $S$,
\item Each prime ideal $\textbf{p}$ of $S$ is subtractive.

\end{enumerate}

\begin{proof}
$(1) \rightarrow (2)$: Suppose $I$ is an ideal of $S$ and $a,b \in S$ such that $a+b, a \in \sqrt I$. We need to show that $b\in \sqrt I$. Let $X \in X_\Lambda \cup {X_\Delta}^{-1}$ be an indeterminate and put $f=a+bX$ and $g=b+(a+b)X$. Then just like the proof of Theorem \ref{WDMcriteria1}, we have $b \in \sqrt I$.

$(2) \rightarrow (3)$ is obvious and by considering Theorem \ref{primeextension}, the proof of $(3) \rightarrow (1)$ is just the mimic of the proof of Theorem \ref{WDMcriteria1}.
\end{proof}

\end{theorem}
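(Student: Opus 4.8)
The plan is to prove the chain of equivalences $(1)\Leftrightarrow(2)\Leftrightarrow(3)$ by essentially lifting the one-variable argument of Theorem \ref{WDMcriteria1} through the support-reduction device of Lemma \ref{supppoly}. The implications $(1)\Rightarrow(2)$ and $(2)\Rightarrow(3)$ should go through with only cosmetic changes from the single-indeterminate case; the only genuinely new input needed is $(3)\Rightarrow(1)$, where one must pass from prime ideals of $S$ to prime ideals of a Laurent polynomial semiring in arbitrarily many indeterminates, and this is exactly what Theorem \ref{primeextension} now supplies.

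First I would prove $(1)\Rightarrow(2)$. Given an ideal $I$ of $S$ and $a,b\in S$ with $a+b,a\in\sqrt I$, pick any single indeterminate $X\in X_\Lambda\cup{X_\Delta}^{-1}$ — such an $X$ exists since $\Lambda\cup\Delta\neq\emptyset$ — and set $f=a+bX$, $g=b+(a+b)X$, viewed as elements of $S[X_\Lambda,{X_\Delta}^{-1}]$. A direct expansion gives $fg=ab+(a^2+ab+b^2)X+(ab+b^2)X^2$, so $c(fg)\subseteq\sqrt I$; since $c(f)=(a,b)$ and $c(g)=(a+b,b)$, hypothesis $(1)$ yields $b^2\in c(f)c(g)\subseteq\sqrt{c(fg)}\subseteq\sqrt{\sqrt I}=\sqrt I$, hence $b\in\sqrt I$. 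This is word-for-word the argument in Theorem \ref{WDMcriteria1}, just carried out inside the larger semiring. Then $(2)\Rightarrow(3)$ is immediate, since every prime ideal $\mathbf p$ equals its own radical $\sqrt{\mathbf p}$.

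The substantive step is $(3)\Rightarrow(1)$. Fix $f,g\in S[X_\Lambda,{X_\Delta}^{-1}]$; these lie in some $S[X_1,\dots,X_n,{Y_1}^{-1},\dots,{Y_m}^{-1}]$. I would show $c(f)c(g)\subseteq\mathbf p$ for every prime $\mathbf p$ of $S$ containing $c(fg)$, and then conclude by Krull's theorem (\cite[Proposition 7.28]{G}) that $c(f)c(g)\subseteq\bigcap_{\mathbf p\in\Spec_{c(fg)}(S)}\mathbf p=\sqrt{c(fg)}$. So let $\mathbf p\supseteq c(fg)$ be prime; by $(3)$ it is subtractive, so Theorem \ref{primeextension} tells us $\mathbf p[X_\Lambda,{X_\Delta}^{-1}]$ (equivalently $\mathbf p[X_1,\dots,X_n,{Y_1}^{-1},\dots,{Y_m}^{-1}]$) is a prime ideal of the corresponding Laurent polynomial semiring. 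Since $c(fg)\subseteq\mathbf p$ means $fg$ lies in this prime ideal, either $f$ or $g$ does, i.e. $c(f)\subseteq\mathbf p$ or $c(g)\subseteq\mathbf p$, and in either case $c(f)c(g)\subseteq\mathbf p$. The inclusion $c(fg)\subseteq c(f)c(g)$ holds because $S[X_\Lambda,{X_\Delta}^{-1}]$ is a content $S$-semimodule (the relevant corollary in Section 5), so one gets the full chain in $(1)$.

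The main obstacle is conceptual rather than computational: one needs the statement ``$\mathbf p$ subtractive prime $\Rightarrow$ $\mathbf p[X_\Lambda,{X_\Delta}^{-1}]$ prime'' for infinitely many indeterminates and with inverses allowed, which is precisely why Theorem \ref{primeextension} (proved via the support-reduction Lemma \ref{supppoly} and the base case \cite[Theorem 2.6]{L}) had to be established first. Once that is in hand, the proof reduces to the single-variable template, so I would simply write that $(1)\Rightarrow(2)$ mimics Theorem \ref{WDMcriteria1}, $(2)\Rightarrow(3)$ is obvious, and $(3)\Rightarrow(1)$ follows from Theorem \ref{primeextension} together with Krull's theorem exactly as in Theorem \ref{WDMcriteria1}.
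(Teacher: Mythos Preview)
Your proposal is correct and follows exactly the paper's own route: the paper proves $(1)\Rightarrow(2)$ by choosing a single indeterminate and reusing the polynomials $f=a+bX$, $g=b+(a+b)X$ from Theorem~\ref{WDMcriteria1}, notes $(2)\Rightarrow(3)$ is obvious, and for $(3)\Rightarrow(1)$ invokes Theorem~\ref{primeextension} together with Krull's theorem just as you do. Your write-up is in fact more detailed than the paper's, which simply says ``mimic the proof of Theorem~\ref{WDMcriteria1}'' at each step.
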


\begin{corollary}
Let $S$ be a subtractive semiring and $\Lambda , \Delta$ be two index sets. Then $c(fg) \subseteq c(f)c(g) \subseteq \sqrt {c(fg)}$ for all $f,g \in S[X_\Lambda , {X_\Delta}^{-1}]$. In particular, if $S$ is a reduced subtractive semiring, then $fg=0$ implies $\supp(f)\supp(g)=\{0\}$ for all $f,g \in S[X_\Lambda , {X_\Delta}^{-1}]$.
\end{corollary}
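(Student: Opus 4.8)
The plan is to deduce both assertions directly from Theorem~\ref{WDMcriteria2}. First I would dispose of the degenerate case $\Lambda\cup\Delta=\emptyset$, in which $S[X_\Lambda,{X_\Delta}^{-1}]=S$, the content function reduces to $c(s)=(s)$, and the inclusions $c(fg)=(fg)=(f)(g)=c(f)c(g)\subseteq\sqrt{c(fg)}$ are immediate; so I may assume $\Lambda\cup\Delta\neq\emptyset$. Since $S$ is a subtractive semiring, \emph{every} ideal of $S$ is subtractive, and in particular every prime ideal $\textbf{p}$ of $S$ is subtractive. Thus condition~(3) of Theorem~\ref{WDMcriteria2} holds, hence so does condition~(1), which is exactly the desired chain $c(fg)\subseteq c(f)c(g)\subseteq\sqrt{c(fg)}$ for all $f,g\in S[X_\Lambda,{X_\Delta}^{-1}]$.

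For the ``in particular'' part, I would specialize the first assertion to $fg=0$ under the extra hypothesis that $S$ is reduced, i.e.\ $\Nil(S)=(0)$. From $fg=0$ we get $c(fg)=c(0)=(0)$, whence $\sqrt{c(fg)}=\sqrt{(0)}=\Nil(S)=(0)$ by the definition of the radical together with reducedness. The chain established above then forces $c(f)c(g)=(0)$. Now $c(f)$ and $c(g)$ are the ideals of $S$ generated by the coefficients of $f$ and $g$, equivalently by $\supp(f)$ and $\supp(g)$ (adjoining $0$ to a generating set changes nothing), so the product ideal $c(f)c(g)$ contains every product $ab$ with $a\in\supp(f)$ and $b\in\supp(g)$; since this ideal is zero, each such product vanishes, and this is precisely $\supp(f)\supp(g)=\{0\}$ (the case $f=0$ or $g=0$ being trivial).

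I do not expect any genuine obstacle here: once Theorem~\ref{WDMcriteria2} is available, both parts are essentially one-line consequences. The only points that need a word of care are the harmless separation of the case $\Lambda\cup\Delta=\emptyset$, the observation that $c(f)$ is generated by $\supp(f)$, and the routine bookkeeping translating $c(f)c(g)=(0)$ into $\supp(f)\supp(g)=\{0\}$.
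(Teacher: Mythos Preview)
Your proposal is correct and follows exactly the route the paper intends: the corollary is stated without proof immediately after Theorem~\ref{WDMcriteria2}, so the paper is relying on precisely the implication $(3)\Rightarrow(1)$ you invoke, together with the obvious unwinding in the reduced case. Your handling of the degenerate case $\Lambda\cup\Delta=\emptyset$ and the translation from $c(f)c(g)=(0)$ to $\supp(f)\supp(g)=\{0\}$ are the appropriate bits of bookkeeping to make the one-line argument fully rigorous.
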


 Now let for the moment $S$ be a semiring and $X_1,X_2,\ldots,X_n$ be $n$ indeterminates over $S$. Obviously the set of all formal power series $S[[X_1,X_2,\ldots, X_n]]$ is an $S$-semialgebra. In the rest of this section, we discuss this important family of semialgebras and show that under suitable conditions, they are also good examples for weak content semialgebras.

 \begin{proposition}

 \label{formalpowerseries}

 If for any $f\in S[[X_1,X_2,\ldots, X_n]]$, we set $A_f$ to be the ideal generated by the coefficients of $f$, then $A_{fg} \subseteq A_fA_g$ for all $f,g \in S[[X_1,X_2,\ldots, X_n]]$. Moreover if $S$ is a Noetherian semiring, then the following statements hold:

\begin{enumerate}

\item\ If $I$ is an ideal of $S$, then $I \cdot S[[X_1,X_2,\ldots, X_n]] = I[[X_1,X_2,\ldots, X_n]]$;

\item The function $A_f : S[[X_1,X_2,\ldots, X_n]] \longrightarrow \Id(S)$ satisfies the following property: $A_f \subseteq I$ iff $f\in I \cdot S[[X_1,X_2,\ldots, X_n]]$ for each ideal $I$ of $S$;

\item $c(f) = A_f$ for any $f\in S[[X_1,X_2,\ldots, X_n]]$ and $S[[X_1,X_2,\ldots, X_n]]$ is a content $S$-semimodule.

\end{enumerate}

\begin{proof} The statement $A_{fg} \subseteq A_fA_g$ for all $f,g \in S[[X_1,X_2,\ldots, X_n]]$ is straightforward.

Now let $S$ be a Noetherian semiring. Then $(1)$ is straightforward. We prove $(2)$ and $(3)$. The proof of $(2)$ is as follows:

$(\rightarrow)$: Let $f\in S[[X_1,X_2,\ldots, X_n]]$ and $I$ be an ideal of $S$ such that $A_f \subseteq I$. Therefore $A_f[[X_1,X_2,\ldots, X_n]] \subseteq I[[X_1,X_2,\ldots, X_n]]$ and so $f\in I[[X_1,X_2,\ldots, X_n]]$. But since $S$ is a Noetherian ring, $I \cdot S[[X_1,X_2,\ldots, X_n]]= I[[X_1,X_2,\ldots, X_n]]$ and so $f \in I \cdot S[[X_1,X_2,\ldots, X_n]]$.

 $(\leftarrow)$: On the other hand if $f \in I \cdot S[[X_1,X_2,\ldots, X_n]]$, then $f \in I[[X_1,X_2,\ldots, X_n]]$ and so each coefficient of $f$ is an element of $I$ and finally $A_f \subseteq I$.

The proof of $(3)$ is as follows:

Since $S$ is a Noetherian semiring, by $(2)$ and Proposition \ref{contentsemimodule}, $S[[X_1,X_2,\ldots, X_n]]$ is a content $S$-semimodule and $c(f) = A_f$ for all $f\in S[[X_1,X_2,\ldots, X_n]]$.
 \end{proof}

 \end{proposition}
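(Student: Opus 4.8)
The plan is to prove the unconditional inclusion first and then dispatch the three Noetherian statements in the order $(1) \Rightarrow (2) \Rightarrow (3)$, since each one feeds directly into the next. For the inclusion $A_{fg} \subseteq A_f A_g$, I would write $f = \sum_{\alpha} a_\alpha X^\alpha$ and $g = \sum_\beta b_\beta X^\beta$ in multi-index notation, so that the coefficient of $X^\gamma$ in $fg$ is the finite sum $\sum_{\alpha + \beta = \gamma} a_\alpha b_\beta$. Each summand lies in the ideal $A_f A_g$, and since an ideal is closed under addition, so does the whole coefficient. As $A_{fg}$ is generated by these coefficients, $A_{fg} \subseteq A_f A_g$; no hypothesis on $S$ is needed here.

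For (1), the inclusion $I \cdot S[[X_1,\cdots,X_n]] \subseteq I[[X_1,\cdots,X_n]]$ is again coefficientwise and requires nothing: a finite combination $\sum_j a_j f_j$ with $a_j \in I$ has every coefficient a finite $S$-combination of the $a_j$, hence in $I$. The reverse inclusion is the only place Noetherianness enters. Given $f \in I[[X_1,\cdots,X_n]]$, write $I = (a_1, \cdots, a_k)$ using finite generation. For each monomial $X^\gamma$ choose an expression $c_\gamma = \sum_{i=1}^k s_{i,\gamma}\, a_i$ of its coefficient, form the power series $g_i = \sum_\gamma s_{i,\gamma} X^\gamma$, and observe that $\sum_i a_i g_i = f$, comparing coefficients of $X^\gamma$ on both sides. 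There is no convergence obstruction: a formal power series is determined by a free choice of each coefficient. Hence $f \in I \cdot S[[X_1,\cdots,X_n]]$.

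Statement (2) is then just an unwinding of (1): $A_f \subseteq I$ says exactly that every coefficient of $f$ lies in $I$, i.e.\ $f \in I[[X_1,\cdots,X_n]]$, which by (1) equals $I \cdot S[[X_1,\cdots,X_n]]$; conversely $f \in I \cdot S[[X_1,\cdots,X_n]] = I[[X_1,\cdots,X_n]]$ forces $A_f \subseteq I$. Finally, for (3), statement (2) shows that the assignment $f \mapsto A_f$ is a function $S[[X_1,\cdots,X_n]] \longrightarrow \Id(S)$ with $A_f \subseteq I$ iff $f \in I \cdot S[[X_1,\cdots,X_n]]$, which is precisely condition (1) in Proposition \ref{contentsemimodule}. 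Invoking that proposition yields that $S[[X_1,\cdots,X_n]]$ is a content $S$-semimodule and that its content function agrees with $A_f$, i.e.\ $c(f) = A_f$.

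The only genuinely substantive point is the reverse inclusion in (1), and there the whole weight rests on $I$ being finitely generated: without it one cannot in general realize a power series whose coefficients all lie in $I$ as a \emph{finite} $S[[X_1,\cdots,X_n]]$-combination of a generating set. I would flag explicitly that this is the sole reason the Noetherian hypothesis is imposed, and that everything else in the proof is bookkeeping.
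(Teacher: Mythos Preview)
Your proposal is correct and follows essentially the same route as the paper: you deduce (2) from (1) and then (3) from (2) via Proposition~\ref{contentsemimodule}, exactly as the paper does. The only difference is that you spell out the ``straightforward'' parts---the coefficientwise argument for $A_{fg}\subseteq A_fA_g$ and the explicit use of a finite generating set $I=(a_1,\ldots,a_k)$ to prove the nontrivial inclusion in (1)---whereas the paper leaves these to the reader.
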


 Now we bring a celebrated theorem on the prime ideals of formal power series on semirings. Though the proof of the following lemma is rather similar to the proof of Theorem 2.6 in \cite{L}, we also bring its proof only for the sake of the completeness of the present paper.

 \begin{lemma}

 \label{primepowerseries}

 If $P$ is an ideal of a semiring $S$ and $X$ is an indeterminate over $S$, then $P[[X]]$ is a prime ideal of $S[[X]]$ iff $P$ is a subtractive prime ideal of $S$.

 \begin{proof}

 First we prove $(\leftarrow)$. Let $P$ be a subtractive prime ideal of $S$. We let $f,g \in S[[X]]$ such that $fg \in P[[X]]$. Therefore $A_{fg} \subseteq P$. If we set $f=a_0+a_1 X+\cdots+a_n X^n + \cdots$ and $g=b_0+b_1 X+\cdots+b_n X^n + \cdots$, then $a_0b_0 \in P$. Our claim is that $a_ib_j \in P$ for all $i,j \in \mathbb N_0$. First we prove $a_0b_j \in P$ by induction on $j$. Let $a_0b_k \in P$ for all $0\leq k \leq n$. Note that the coefficient of the monomial $X^{n+1}$ in $fg$ is also in $P$. This means that $a_0b_{n+1}+ a_1b_n + \cdots + a_{n+1}b_0 \in P$. Therefore $a_0(a_0b_{n+1}+ a_1b_n + \cdots + a_{n+1}b_0) \in P$.

 But $a_0(a_0b_{n+1}+ a_1b_n + \cdots + a_{n+1}b_0) = {a_0}^2 b_{n+1}+a_0(a_1b_n + \cdots + a_{n+1}b_0)$ and since $a_0b_k \in P$ for all $0\leq k \leq n$ and $P$ is subtractive, then ${a_0}^2 b_{n+1} \in P$ and this means that $(a_0b_{n+1})^2 \in P$. Hence $a_0b_{n+1} \in P$, since $P$ is a prime ideal of $S$. So for the moment we get that $a_0g \in P[[X]]$. But $P[[X]]$ is a subtractive ideal of $S[[X]]$, since $P$ is a subtractive ideal of $S$. So $(a_1X+a_2X^2+ \cdots + a_nX^n + \cdots)g \in P[[X]]$. We let $f_1=a_1+a_2X+ \cdots + a_{n+1}X^n + \cdots$, so $A_{f_1g} \subseteq P$. So the same process for $f_1$ shows us that $a_1b_j \in P$. Therefore if we set $f_i = a_i+a_{1+i}X+\cdots a_{n+i}X^n+\cdots$, then by induction on $i$, we get that $X^{i} f_i g \in P[[X]]$ and $A_{f_ig} \subseteq P$ and finally $a_ib_j \in P$ for all $i,j \in \mathbb N_0$. This means that $A_fA_g \subseteq P$ and since $P$ is a prime ideal of $S$, either $A_f \subseteq P$ or $A_g \subseteq P$ and at last either $f\in P[[X]]$ or $g\in P[[X]]$ and the proof of $(\leftarrow)$ is complete.

 Now we prove $(\rightarrow)$. Let $P$ be an ideal of $S$ such that $P[[X]]$ is a prime ideal of $S[[X]]$. We let $a,b\in S$ such that $ab\in P$. Therefore $ab\in P[[X]]$ and so either $a\in P[[X]]$ or $b\in P[[X]]$. This implies that either $a\in P$ or $b\in P$. In the next step we prove that $P$ is subtractive. We let $a,b\in S$ such that $a+b,a \in P$ and put $f=a+bX$ and $g=b+(a+b)X$. We have $fg= ab+(a^2+ab+b^2)X+(ab+b^2)X^2$ and therefore $fg\in P[[X]]$. But $P[[X]]$ is a prime ideal of $S[[X]]$. So either $f\in P[[X]]$ or $g\in P[[X]]$ and in any case $b\in P$ and the proof is complete.
 \end{proof}

 \end{lemma}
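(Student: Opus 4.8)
The plan is to follow the architecture of \cite[Theorem 2.6]{L} (the polynomial analogue): the forward implication splits into two short arguments that exploit the embedding of $S$ into $S[[X]]$ as constant series, while the reverse implication requires genuine work, since power series have no leading coefficient on which to induct, and the ``degree induction'' of the polynomial case must be replaced by a nested induction on coefficient indices, with the subtractivity of $P[[X]]$ doing the job that subtraction does in the ring case.

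For $(\Leftarrow)$, assume $P$ is a subtractive prime ideal of $S$. First I would record two easy facts: $P[[X]]\neq S[[X]]$ (since $1\notin P$), and $P[[X]]$ is a subtractive ideal of $S[[X]]$, which holds because subtractivity can be tested coefficient by coefficient. Now take $f=\sum a_iX^i$ and $g=\sum b_jX^j$ with $fg\in P[[X]]$; the target is $a_ib_j\in P$ for all $i,j$, for then the ideal product $A_fA_g$ (with $A_f$ the ideal generated by the coefficients of $f$, as in Proposition \ref{formalpowerseries}) lies in $P$, and primeness of $P$ gives $A_f\subseteq P$ or $A_g\subseteq P$, i.e. $f\in P[[X]]$ or $g\in P[[X]]$. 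To reach $a_0b_j\in P$ I would induct on $j$: the base case is the constant coefficient $a_0b_0\in P$, and for the step I would multiply the $(n{+}1)$-st coefficient relation $a_0b_{n+1}+a_1b_n+\cdots+a_{n+1}b_0\in P$ through by $a_0$, note that each summand other than $a_0^2b_{n+1}$ equals $a_k(a_0b_{n+1-k})$ with $a_0b_{n+1-k}\in P$ by the inductive hypothesis, invoke subtractivity of $P$ to get $a_0^2b_{n+1}\in P$, hence $(a_0b_{n+1})^2\in P$, hence $a_0b_{n+1}\in P$ by primeness. This yields $a_0g\in P[[X]]$; writing $f=a_0+Xf_1$ and applying subtractivity of $P[[X]]$ to $fg=a_0g+Xf_1g\in P[[X]]$ gives $Xf_1g\in P[[X]]$, hence (a harmless index shift) $f_1g\in P[[X]]$, and repeating the same argument for $f_1,f_2,\ldots$ in an outer induction on the index $i$ delivers $a_ib_j\in P$ for all $i,j$.

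For $(\Rightarrow)$, assume $P[[X]]$ is prime. That $P$ is prime and proper is immediate from $P[[X]]\cap S=P$: if $ab\in P$ then $ab\in P[[X]]$, so $a\in P[[X]]$ or $b\in P[[X]]$, whence $a\in P$ or $b\in P$; and $P\neq S$ because $P[[X]]\neq S[[X]]$. For subtractivity, given $a,a+b\in P$, I would reuse the test pair from Theorems \ref{WDMcriteria1} and \ref{primeextension}: set $f=a+bX$ and $g=b+(a+b)X$, so $fg=ab+(a^2+ab+b^2)X+(ab+b^2)X^2$. Each coefficient lies in $P$ — $ab$ because $a\in P$, $ab+b^2=(a+b)b$ because $a+b\in P$, and hence also $a^2+ab+b^2=a^2+(ab+b^2)$ — so $fg\in P[[X]]$, and primeness forces $f\in P[[X]]$ or $g\in P[[X]]$; in the first case the coefficient of $X$ gives $b\in P$, in the second the constant coefficient gives $b\in P$.

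The main obstacle is precisely the reverse implication: with no leading term available one cannot reduce to lower degree in a single stroke, so the proof must be organized as the inner induction on $j$ nested inside the outer induction on $i$, and one must be careful to use the subtractivity of both $P$ and $P[[X]]$ — the former to extract $a_0^2b_{n+1}$ from a sum, the latter to strip the lowest surviving monomial off the front of $f$ — and to note explicitly that $Xf_1g\in P[[X]]$ is equivalent to $f_1g\in P[[X]]$ so that the outer induction actually closes.
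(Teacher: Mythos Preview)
Your proposal is correct and follows essentially the same route as the paper's proof: the same nested induction (inner on $j$ after multiplying the $(n{+}1)$-st coefficient relation by $a_0$, outer on $i$ via the shifted tails $f_i$), the same use of subtractivity of both $P$ and $P[[X]]$, and the identical test pair $f=a+bX$, $g=b+(a+b)X$ for the forward direction. If anything, you are slightly more explicit than the paper in recording $P[[X]]\neq S[[X]]$, in justifying why each coefficient of $fg$ lies in $P$ in the $(\Rightarrow)$ part, and in noting that $Xf_1g\in P[[X]]\iff f_1g\in P[[X]]$.
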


 \begin{corollary}

 \label{primepowerseries2}

If $P$ is an ideal of a semiring $S$ and $X_1,X_2,\ldots,X_n$ are $n$ indeterminate over $S$, then $P[[X_1,X_2,\ldots,X_n]]$ is a prime ideal of $S[[X_1,X_2,\ldots,X_n]]$ iff $P$ is a subtractive prime ideal of $S$.

\begin{proof}
First we prove $(\leftarrow)$ by induction on the number of indeterminates $n$. The case $n=1$ is nothing but Lemma \ref{primepowerseries}. Now assume the statement is true for $n=k$. We suppose $P$ is a subtractive ideal of $S$. By induction's hypothesis, $P[[X_1,X_2,\ldots,X_k]]$ is a prime ideal of $S[[X_1,X_2,\ldots,X_k]]$. Obviously $P[[X_1,X_2,\ldots,X_k]]$ is subtractive as well. We let $X_{k+1}$ be an indeterminate over $S[[X_1,X_2,\ldots,X_k]]$. Therefore by Lemma \ref{primepowerseries}, $P[[X_1,X_2,\ldots,X_k]][[X_{k+1}]]$ is a prime ideal of $S[[X_1,X_2,\ldots,X_k]][[X_{k+1}]]$ and this is what we wanted to show. The proof of $(\rightarrow)$ is nothing but just the mimic of $(\rightarrow)$ in proof of Lemma \ref{primepowerseries}.
\end{proof}

\end{corollary}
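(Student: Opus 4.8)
The plan is to induct on the number $n$ of indeterminates, using the one-variable result Lemma \ref{primepowerseries} both as the base case and as the engine of the inductive step, by regarding $S[[X_1,\cdots,X_{k+1}]]$ as $\big(S[[X_1,\cdots,X_k]]\big)[[X_{k+1}]]$ under the canonical identification (collecting a power series in $k+1$ variables according to the powers of $X_{k+1}$), which carries $P[[X_1,\cdots,X_{k+1}]]$ onto $\big(P[[X_1,\cdots,X_k]]\big)[[X_{k+1}]]$. The case $n=1$ is exactly Lemma \ref{primepowerseries}.

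For the implication $(\leftarrow)$, assume $P$ is a subtractive prime ideal of $S$ and, inductively, that $Q:=P[[X_1,\cdots,X_k]]$ is a prime ideal of $T:=S[[X_1,\cdots,X_k]]$. First I would observe that $Q$ is subtractive in $T$: addition of formal power series is coefficient-wise, so if $h\in Q$ and $h+h'\in Q$ then, coefficient by coefficient, subtractivity of $P$ forces every coefficient of $h'$ into $P$, whence $h'\in Q$. Now $X_{k+1}$ is a single indeterminate over $T$ and $Q$ is a subtractive prime ideal of $T$, so Lemma \ref{primepowerseries} applies directly and yields that $Q[[X_{k+1}]]$ is a prime ideal of $T[[X_{k+1}]]$; under the identification above this says precisely that $P[[X_1,\cdots,X_{k+1}]]$ is prime in $S[[X_1,\cdots,X_{k+1}]]$, completing the induction.

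For the implication $(\rightarrow)$, assume $P[[X_1,\cdots,X_n]]$ is prime. Identifying $S$ with the constant power series, for $a,b\in S$ with $ab\in P$ we have $ab\in P[[X_1,\cdots,X_n]]$, so one of $a,b$ lies in $P[[X_1,\cdots,X_n]]$ and hence, being constant, in $P$; thus $P$ is prime. For subtractivity, given $a,b\in S$ with $a+b,a\in P$, set $f=a+bX_1$ and $g=b+(a+b)X_1$; then $fg=ab+(a^2+ab+b^2)X_1+(ab+b^2)X_1^2$, and each coefficient is a sum of elements of $P$ (using $a\in P$ together with $a+b\in P$), so $fg\in P[[X_1,\cdots,X_n]]$. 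Primeness forces $f\in P[[X_1,\cdots,X_n]]$ or $g\in P[[X_1,\cdots,X_n]]$; in the first case $b$ occurs as a coefficient of $f$, in the second $b$ is the constant term of $g$, so in either case $b\in P$ and $P$ is subtractive.

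The argument is essentially routine and I do not expect a genuine obstacle; the one place meriting care is the bookkeeping in the inductive step --- confirming that $P[[X_1,\cdots,X_k]]$ is itself subtractive so that Lemma \ref{primepowerseries} may legitimately be reapplied, and that the iterated power series semiring $\big(S[[X_1,\cdots,X_k]]\big)[[X_{k+1}]]$ really is canonically $S[[X_1,\cdots,X_{k+1}]]$, transporting $\big(P[[X_1,\cdots,X_k]]\big)[[X_{k+1}]]$ to $P[[X_1,\cdots,X_{k+1}]]$.
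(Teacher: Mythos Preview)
Your proof is correct and follows essentially the same approach as the paper: induction on $n$ using Lemma \ref{primepowerseries} for both the base case and the inductive step (after noting that $P[[X_1,\cdots,X_k]]$ is itself subtractive), and the $(\rightarrow)$ direction via the same polynomials $f=a+bX_1$, $g=b+(a+b)X_1$ used in Lemma \ref{primepowerseries}. You have in fact been more explicit than the paper, which simply says the $(\rightarrow)$ direction ``is nothing but just the mimic'' of the one-variable argument.
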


\begin{theorem}

\label{WDMcriteria3}

Let $S$ be a semiring. Then the following statements are equivalent:

\begin{enumerate}

\item $A_{fg} \subseteq A_fA_g \subseteq \sqrt {A_{fg}}$ for all $f,g\in S[[X_1,X_2,\ldots,X_n]]$,
\item $\sqrt I$ is subtractive for each ideal $I$ of the semiring $S$,
\item Each prime ideal $P$ of $S$ is subtractive.

\end{enumerate}

\begin{proof}
By considering Corollary \ref{primepowerseries2}, the proof is nothing but only the mimic of the proof of Theorem \ref{WDMcriteria1} by substituting $A_f$ for $c(f)$.
\end{proof}

\end{theorem}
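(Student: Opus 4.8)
The plan is to run through the same three-step cycle $(1)\Rightarrow(2)\Rightarrow(3)\Rightarrow(1)$ that proves Theorem \ref{WDMcriteria1}, but now with the coefficient-ideal function $A_{(-)}$ on $S[[X_1,X_2,\cdots,X_n]]$ in place of the content function $c$ on $S[X]$, and with the power-series prime-extension result Corollary \ref{primepowerseries2} in place of the polynomial version \cite[Theorem 2.6]{L}.

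For $(1)\Rightarrow(2)$ I would take an ideal $I$ of $S$ and elements $a,b\in S$ with $a+b,a\in\sqrt I$, and exhibit the power series $f=a+bX_1$ and $g=b+(a+b)X_1$ in $S[[X_1,X_2,\cdots,X_n]]$ (all higher coefficients zero). Then $fg=ab+(a^2+ab+b^2)X_1+(ab+b^2)X_1^2$, and each coefficient lies in $\sqrt I$: $ab$ is a multiple of $a$, $ab+b^2=b(a+b)$ is a multiple of $a+b$, and $a^2+ab+b^2=a\cdot a+b(a+b)$ is a sum of two such multiples. Hence $A_{fg}\subseteq\sqrt I$, so hypothesis $(1)$ yields $A_fA_g\subseteq\sqrt{A_{fg}}\subseteq\sqrt{\sqrt I}=\sqrt I$. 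Since $A_f=(a,b)$ and $A_g=(a+b,b)$, we have $b^2\in A_fA_g\subseteq\sqrt I$, whence $b\in\sqrt I$, so $\sqrt I$ is subtractive. The implication $(2)\Rightarrow(3)$ is immediate, since every prime ideal $P$ satisfies $\sqrt P=P$.

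For $(3)\Rightarrow(1)$ the left inclusion $A_{fg}\subseteq A_fA_g$ is the routine computation already recorded in Proposition \ref{formalpowerseries}. For the right inclusion I would fix a prime ideal $P$ of $S$ with $A_{fg}\subseteq P$; then every coefficient of $fg$ lies in $P$, so $fg\in P[[X_1,X_2,\cdots,X_n]]$, and since $P$ is subtractive by $(3)$, Corollary \ref{primepowerseries2} makes $P[[X_1,X_2,\cdots,X_n]]$ a prime ideal of $S[[X_1,X_2,\cdots,X_n]]$. Therefore $f$ or $g$ belongs to it, i.e.\ $A_f\subseteq P$ or $A_g\subseteq P$, so $A_fA_g\subseteq P$ in either case. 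Intersecting over all primes $P$ containing $A_{fg}$ and invoking Krull's Theorem \cite[Proposition 7.28]{G}, which gives $\sqrt{A_{fg}}=\bigcap_{P\in\Spec_{A_{fg}}(S)}P$, delivers $A_fA_g\subseteq\sqrt{A_{fg}}$.

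The only genuinely new ingredient beyond Theorem \ref{WDMcriteria1} is the power-series prime-extension statement Corollary \ref{primepowerseries2}; everything else transfers almost verbatim, so I do not anticipate a real obstacle. The one point to watch is that $A_{(-)}$, unlike $c$, is not a priori a content function when $S$ fails to be Noetherian, but the argument above uses only the elementary equivalence ``$h\in P[[X_1,X_2,\cdots,X_n]]$ iff $A_h\subseteq P$'', which holds for every ideal $P$ with no finiteness hypothesis, so no Noetherian assumption is needed.
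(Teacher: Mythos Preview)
Your proposal is correct and follows essentially the same route as the paper: the paper's proof simply says to mimic Theorem~\ref{WDMcriteria1} with $A_f$ in place of $c(f)$ and Corollary~\ref{primepowerseries2} in place of \cite[Theorem 2.6]{L}, which is exactly what you have written out. Your closing remark that no Noetherian hypothesis is needed because only the trivial equivalence ``$h\in P[[X_1,\ldots,X_n]]$ iff $A_h\subseteq P$'' is used is a correct and worthwhile observation.
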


Theorem \ref{WDMcriteria3} and Proposition \ref{formalpowerseries} give us the following corollary:

\begin{corollary}

\label{WDMcriteria4}

Let $S$ be a Noetherian semiring. Then the following statements are equivalent:

\begin{enumerate}

\item $S[[X_1,X_2,\ldots,X_n]]$ is a weak content $S$-semialgebra,
\item $\sqrt I$ is subtractive for each ideal $I$ of the semiring $S$,
\item Each prime ideal $P$ of $S$ is subtractive.

\end{enumerate}
\end{corollary}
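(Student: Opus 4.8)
The plan is to reduce the statement to Theorem \ref{WDMcriteria3} by unwinding the definition of a weak content semialgebra and using the Noetherian hypothesis to pin down the content function. First I would recall from Proposition \ref{formalpowerseries} that when $S$ is Noetherian, the formal power series semiring $B = S[[X_1,X_2,\cdots,X_n]]$ is a content $S$-semimodule and, moreover, $c(f) = A_f$ for every $f \in B$, where $A_f$ is the ideal of $S$ generated by the coefficients of $f$. This already settles condition (1) of Definition \ref{weakcontentsemialgebra} unconditionally, so that ``$B$ is a weak content $S$-semialgebra'' becomes equivalent to the single content inequality $c(f)c(g) \subseteq \sqrt{c(fg)}$ for all $f,g \in B$.

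Next I would rewrite that inequality using the identification $c = A$ furnished by Proposition \ref{formalpowerseries}: it becomes $A_f A_g \subseteq \sqrt{A_{fg}}$ for all $f,g \in B$. Since $A_{fg} \subseteq A_f A_g$ holds for \emph{any} semiring (the first assertion of Proposition \ref{formalpowerseries}), statement (1) of the corollary is therefore literally statement (1) of Theorem \ref{WDMcriteria3}, namely $A_{fg} \subseteq A_f A_g \subseteq \sqrt{A_{fg}}$ for all $f,g \in S[[X_1,X_2,\cdots,X_n]]$.

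Finally I would invoke Theorem \ref{WDMcriteria3}, which gives the equivalence of that statement with ``$\sqrt I$ is subtractive for each ideal $I$ of $S$'' and ``each prime ideal $P$ of $S$ is subtractive''. Chaining the two equivalences yields the corollary.

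I do not expect a genuine obstacle here: the argument is essentially bookkeeping. The one point that needs care is tracking exactly where the Noetherian hypothesis enters — it is what makes $B$ a content $S$-semimodule and what identifies $c(f)$ with $A_f$, and without it the bridge between Definition \ref{weakcontentsemialgebra} and the purely coefficient-theoretic statement of Theorem \ref{WDMcriteria3} would collapse. So the ``hard part'' is merely to check that no extra hypothesis is smuggled in and that the content function in the definition of weak content semialgebra coincides with the ideal $A_f$ appearing in Theorem \ref{WDMcriteria3}.
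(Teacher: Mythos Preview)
Your proposal is correct and follows exactly the paper's approach: the paper simply states that the corollary follows from Theorem \ref{WDMcriteria3} and Proposition \ref{formalpowerseries}, and your argument spells out precisely this reduction, using the Noetherian hypothesis via Proposition \ref{formalpowerseries} to identify $c$ with $A$ and make $B$ a content $S$-semimodule, then invoking Theorem \ref{WDMcriteria3}.
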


By Theorem \ref{WDMcriteria1}, Theorem \ref{Minimalprimes1}, Proposition \ref{formalpowerseries} and Corollary \ref{primepowerseries2}, we get the following corollary:

\begin{corollary}

Let $S$ be a Noetherian and a weak Gaussian semiring. Then the following statements hold:
 \begin{enumerate}
 \item $ \phi : \Min(S) \longrightarrow \Min(S[[X_1,X_2,\ldots,X_n]])$ given by $\textbf{p} \longrightarrow \textbf{p}(S[[X_1,X_2,\ldots,X_n]])$ is a bijection;
\item $\Nil(S[[X_1,X_2,\ldots,X_n]]) = \Nil(S) \cdot S[[X_1,X_2,\ldots,X_n]]$. Especially $S[[X_1,X_2,\ldots,X_n]]$ is reduced iff $S$ is reduced.
\end{enumerate}

\end{corollary}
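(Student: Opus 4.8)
The plan is to deduce the corollary by verifying that $B = S[[X_1,X_2,\cdots,X_n]]$ satisfies the two hypotheses of Theorem \ref{Minimalprimes1}, observing that $B$ is a content $S$-semimodule, and then reading off both conclusions; no genuinely new argument is needed beyond assembling the earlier results in the right order.

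First I would invoke the weak Gaussian hypothesis: by Theorem \ref{WDMcriteria1}, every prime ideal $\textbf{p}$ of $S$ is subtractive. Corollary \ref{primepowerseries2} then tells us that $\textbf{p}[[X_1,X_2,\cdots,X_n]]$ is a prime ideal of $S[[X_1,X_2,\cdots,X_n]]$. Since $S$ is Noetherian, Proposition \ref{formalpowerseries}(1) identifies the extended ideal $\textbf{p}B$ with $\textbf{p}[[X_1,X_2,\cdots,X_n]]$, so $\textbf{p}B$ is prime — this is the first hypothesis of Theorem \ref{Minimalprimes1}. For the second hypothesis, note that a power series all of whose coefficients lie in $\textbf{p}$ and which happens to be a constant (an element of $S$) equals that constant, so $\textbf{p}B \cap S = \textbf{p}[[X_1,X_2,\cdots,X_n]] \cap S = \textbf{p}$. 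With both hypotheses verified, Theorem \ref{Minimalprimes1} yields that $\phi \colon \Min(S) \longrightarrow \Min(B)$, $\textbf{p} \longmapsto \textbf{p}B$, is a bijection, which is conclusion (1).

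For conclusion (2), I would first recall that $B$ is a content $S$-semimodule by Proposition \ref{formalpowerseries}(3) (again using that $S$ is Noetherian), with $c(f) = A_f$. Using Krull's Theorem in the form $\Nil(B) = \bigcap_{P \in \Min(B)} P$ together with the bijection $\phi$ from conclusion (1), we get $\Nil(B) = \bigcap_{\textbf{p} \in \Min(S)} \textbf{p}B$. Now Proposition \ref{contentsemimodule}(3) — applicable precisely because $B$ is a content $S$-semimodule — lets me pull the intersection through the extension, so $\bigcap_{\textbf{p} \in \Min(S)} (\textbf{p}B) = \big(\bigcap_{\textbf{p} \in \Min(S)} \textbf{p}\big)B = \Nil(S)B$. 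Combining these, $\Nil(B) = \Nil(S)B$, and in particular $B$ is reduced iff $\Nil(S)B = (0)$ iff $\Nil(S) = (0)$, i.e. iff $S$ is reduced.

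I do not expect a serious obstacle here; the work is entirely a matter of citing the earlier results in sequence. The one point that needs care is making sure the Noetherian hypothesis is invoked at each place where one passes between the extension $\textbf{p}B$ (generated by $\lambda(\textbf{p})$) and the set $\textbf{p}[[X_1,X_2,\cdots,X_n]]$ of power series with coefficients in $\textbf{p}$ — for non-Noetherian $S$ these can differ, and both the primeness step and the content-semimodule step lean on Proposition \ref{formalpowerseries}. A secondary, minor check is that it is exactly the bijectivity of $\phi$ that licenses re-indexing $\bigcap_{P \in \Min(B)} P$ as $\bigcap_{\textbf{p} \in \Min(S)} \textbf{p}B$.
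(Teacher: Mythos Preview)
Your proposal is correct and matches the paper's own approach exactly: the paper deduces the corollary simply by citing Theorem \ref{WDMcriteria1}, Theorem \ref{Minimalprimes1}, Proposition \ref{formalpowerseries}, and Corollary \ref{primepowerseries2}, and you have spelled out precisely how these four results combine. Your derivation of part (2) also mirrors the proof of the (unnumbered) corollary following Theorem \ref{Minimalprimes1}.
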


\section{Content Semialgebras over Semirings Having Few Zero-divisors}

For a semiring $S$, by $Z(S)$, we mean the set of zero-divisors of $S$ and by an associated prime ideal $\textbf{p}$ of the semiring $S$, we mean a prime ideal of $S$ such that $\textbf{p} = \Ann(a) $ for some $a\in S$, where $\Ann(a) = \{s \in S: as=0 \}$. We denote the set of all associated prime ideals of the semiring $S$ by $\Ass(S)$.

One of the most important theorems in commutative ring theory, known as \emph{Prime Avoidance Theorem}, says that if $I$ is an ideal of a ring $R$ and $P_i$ ($1 \leq i \leq n$) are prime ideals of $R$ and $I \subseteq \cup_{i=1}^n P_i$, then $I \subseteq P_i$ for some $i$. A similar assertion can be said for semirings: If $I$ is an ideal of a semiring $S$ and $P_i$ ($1 \leq i \leq n$) are subtractive prime ideals of $S$ and $I \subseteq \cup_{i=1}^n P_i$, then $I \subseteq P_i$ for some $i$. This version of Prime Avoidance Theorem for semirings, enjoys exactly the same proof of the ring version of Prime Avoidance Theorem mentioned in \cite[Theorem 81]{K}. Note that if $S$ is a semiring and $x\in S$, then $\Ann(x) = \{s \in S: sx = 0\}$ is a subtractive ideal of $S$, because if $r+s \in \Ann(x)$ and $r\in \Ann(x)$ for $r,s \in S$, then $(r+s)x = 0$ and $rx=0$ and therefore $sx=0$. This means that $s \in \Ann(x)$ and $\Ann(x)$ is a subtractive ideal of $S$. From all we said, we deduce that if $I \subseteq \cup_{i=1}^n P_i$, where $P_i \in \Ass(S)$, then $I\subseteq P_i = \Ann(x_i)$ for some $i$. Especially $I \cdot x_i =0$, which means that $I$ has a nonzero annihilator.

In \cite{Dav}, it has been defined that a ring $R$ has \textit{few zero-divisors}, if $Z(R)$ is a finite union of prime ideals. We present the following definition to prove some other theorems related to content semialgebras.

 \begin{definition}
  A semiring $S$ has \textit{very few zero-divisors}, if $Z(S)$ is a finite union of prime ideals in $\Ass(S)$.
 \end{definition}

 \begin{theorem}

 \label{noetherian}

 Every Noetherian semiring has very few zero-divisors.

 \begin{proof}

 Let $S$ be a Noetherian semiring. We prove in two steps that the semiring $S$ has very few zero-divisors.

 Step 1: The set of zero-divisors $Z(S)$ of $S$ is a set-theoretic union of maximal primes of $Z(S)$.

 Obviously $Z(S) = \cup_{s \not = 0} \Ann(s)$. Since $S$ is a Noetherian semiring, by \cite[Theorem 1.35]{L}, any $\Ann(s)$ for $s \not= 0$ is contained in a maximal one. Let $P=\Ann(u)$ be maximal among the ideals $\Ann(s)$, where $s \not = 0$ and suppose $ab \in P$ but $a \notin P$. This means that $au \not= 0$ and $P \subseteq \Ann(au)$ and therefore by maximality of $P$, we have $P=\Ann(au)$. Now since $b$ annihilates $au$, we have $b\in P$.

 Step 2: There exist only finitely many maximal primes of $Z(S)$.

Let $\{P_i = \Ann(a_i)\}_i$ be the family of maximal primes of $Z(S)$. Suppose $A$ is the ideal generated by the elements $\{a_i\}_i$, where $P_i = \Ann(a_i)$. Since $S$ is Noetherian, $A$ is generated by a finite number of elements in $\{a_i\}_i$, say $a_1, a_2, \ldots, a_n$. If any further $a$'s exist, can be written as a linear combination of $a_1, a_2, \ldots, a_n$, i.e. $a_{n+1} = t_1 a_1 + t_2 a_2 + \cdots + t_n a_n$ ($t_i \in S$). This implies that $P_1 \cap P_2 \cap \cdots \cap P_n \subseteq P_{n+1}$ and therefore $P_k \subseteq P_{n+1}$ for some $1 \leq k \leq n$, contradicting the maximality of $P_k$. Hence there are no further $a_i$'s or $P_i$'s and the proof is complete (cf. \cite[Theorem 6]{K}).
 \end{proof}

 \end{theorem}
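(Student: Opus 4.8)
The plan is to transcribe the classical Kaplansky-style argument for rings (cf. \cite[Theorem 6]{K}) into the semiring setting, checking at each step that only the multiplicative structure together with the ascending chain condition on ideals is used. By definition of ``very few zero-divisors'' we must produce finitely many primes $P_1, \cdots, P_n \in \Ass(S)$ with $Z(S) = \bigcup_{i=1}^n P_i$.

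First I would record the elementary identity $Z(S) = \bigcup_{0 \neq s \in S} \Ann(s)$, immediate from the definition of a zero-divisor; note also that each $\Ann(s)$ is a genuine (and, as observed just before this theorem, subtractive) ideal of $S$. Since $S$ is Noetherian, its ideals satisfy the ascending chain condition (cf. \cite[Theorem 1.35]{L}), so every ideal of the form $\Ann(s)$ with $s \neq 0$ lies inside a member of the family $\{\Ann(s) : s \in S, s \neq 0\}$ that is maximal with respect to inclusion; hence $Z(S)$ is already the union of these maximal annihilator ideals. I would then verify that such a maximal ideal $P = \Ann(u)$ is prime and associated: if $ab \in P$ but $a \notin P$, then $au \neq 0$ and $P \subseteq \Ann(au)$, so maximality forces $P = \Ann(au)$; since $abu = 0$ this gives $b \in \Ann(au) = P$, while $P = \Ann(u)$ shows $P \in \Ass(S)$. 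This argument is purely multiplicative and carries over verbatim to semirings.

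The substantive step, which I expect to be the main obstacle, is showing there are only finitely many such maximal annihilator primes. List them as $\{P_i = \Ann(a_i)\}_i$ and form the ideal $A$ generated by all the witnesses $a_i$. Since $S$ is Noetherian, $A$ is finitely generated, and expressing a finite generating set of $A$ in terms of the $a_i$ shows $A = (a_1, \cdots, a_n)$ for finitely many of them. If a further distinct prime $P_{n+1} = \Ann(a_{n+1})$ existed, write $a_{n+1} = t_1 a_1 + \cdots + t_n a_n$ with $t_i \in S$; then any $x \in \bigcap_{i=1}^n P_i$ annihilates each $a_i$ with $i \leq n$, hence annihilates $a_{n+1}$, so $x \in P_{n+1}$. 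Thus $\bigcap_{i=1}^n P_i \subseteq P_{n+1}$, and since $P_{n+1}$ is prime the standard avoidance deduction applies: if $P_k \not\subseteq P_{n+1}$ for every $k \leq n$, choose $y_k \in P_k \setminus P_{n+1}$ and observe $y_1 \cdots y_n \in \bigcap_{k=1}^n P_k \subseteq P_{n+1}$, contradicting primeness of $P_{n+1}$. Hence $P_k \subseteq P_{n+1}$ for some $k \leq n$, and maximality of $P_k$ forces $P_k = P_{n+1}$, a contradiction. Therefore there are at most $n$ maximal annihilator primes, so $Z(S) = \bigcup_{i=1}^n P_i$ with each $P_i \in \Ass(S)$.

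The only semiring-specific points requiring care are that $\Ann(s)$ is closed under addition and under multiplication by arbitrary elements of $S$ (true), that the ascending chain condition supplies maximal elements in a family of ideals, and that the implication ``$\bigcap_{i=1}^n P_i \subseteq P_{n+1}$ with $P_{n+1}$ prime $\Rightarrow$ some $P_k \subseteq P_{n+1}$'' relies only on the defining multiplicative property of prime ideals, which is identical for semirings. None of these presents a genuine difficulty, so the theorem is a faithful port of the commutative-ring result.
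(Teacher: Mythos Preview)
Your proof is correct and follows essentially the same two-step Kaplansky argument as the paper: first showing maximal annihilators are prime (hence associated), then using Noetherianity on the ideal generated by the witnesses to bound their number. You supply a bit more detail than the paper (explicitly justifying the avoidance step $\bigcap P_i \subseteq P_{n+1} \Rightarrow P_k \subseteq P_{n+1}$ and flagging the semiring-specific checks), but the structure and key ideas are identical.
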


\begin{theorem}
\label{veryfewzerodivisor}
 Let $B$ be a content $S$-semialgebra. Then the following statements hold:

 \begin{enumerate}

 \item If $S$ has very few zero-divisors, then $B$ has very few zero-divisors as well;
 \item If $S$ is a weak Gaussian semiring and $B$ has very few zero-divisors, then $S$ has very few zero-divisors.

 \end{enumerate}

\end{theorem}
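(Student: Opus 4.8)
The plan is to establish each direction separately, leaning on the structural facts already proved about content semialgebras: that primes extend to primes with $\mathbf{p}B\cap S=\mathbf{p}$ (Proposition \ref{mccoyproperty}(5), together with the content-module identity $c(f)=\cap\{I:f\in IB\}$), McCoy's property (Proposition \ref{mccoyproperty}(6)), and the bijection $\Min(S)\leftrightarrow\Min(B)$ from Corollary \ref{Minimalprimes2}. For part (1), I would first observe that $Z(B)=\bigcup_{P\in\Ass(B)}P$ always holds set-theoretically once we show $Z(B)$ is covered by associated primes; the real content is that there are only finitely many such primes and that they are exactly the extensions of the associated primes of $S$. So the key step is to prove: if $\mathbf{p}=\Ann_S(a)$ is an associated prime of $S$, then $\mathbf{p}B=\Ann_B(a)$ (viewing $a\in S\subseteq B$), hence $\mathbf{p}B\in\Ass(B)$; and conversely every associated prime of $B$ contracts to an associated prime of $S$ and is the extension of it. The inclusion $\mathbf{p}B\subseteq\Ann_B(a)$ is immediate; for the reverse, if $af=0$ with $f\in B$, McCoy's property is not quite what is needed — instead I would use Dedekind--Mertens directly: $af=0$ gives $c(a)^{m+1}c(f)=c(a)^m c(af)=0$, and since $c(a)=(a)$ (as $c$ is the content function and $a\in S$, $c(a)=ac(1)=aS$), we get $a^{m+1}c(f)=0$, so every coefficient-generator $s$ of $c(f)$ satisfies $a^{m+1}s=0$. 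This shows $c(f)\subseteq\sqrt{\Ann(a)}=\Ann(a)=\mathbf{p}$ (the last equality since $\mathbf{p}$ is prime, hence radical), whence $f\in\mathbf{p}B$. That $Z(B)\subseteq\bigcup_{\mathbf{p}\in\Ass(S)}\mathbf{p}B$ then follows: if $fg=0$ with $g\neq 0$, McCoy gives a nonzero $s\in S$ with $sf=0$, so $c(f)\subseteq\Ann(s)\subseteq Z(S)=\bigcup\mathbf{p}_i$ with $\mathbf{p}_i=\Ann(x_i)\in\Ass(S)$; since each $\mathbf{p}_i$ is subtractive (being an annihilator), Prime Avoidance gives $c(f)\subseteq\mathbf{p}_i$ for some $i$, i.e. $f\in\mathbf{p}_iB\subseteq Z(B)$. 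The reverse inclusion $\bigcup\mathbf{p}_iB\subseteq Z(B)$ is clear since $\mathbf{p}_iB=\Ann_B(x_i)$ and $x_i\neq 0$. So $Z(B)=\bigcup_{i=1}^n\mathbf{p}_iB$ with each $\mathbf{p}_iB\in\Ass(B)$, which is the definition of "very few zero-divisors".

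For part (2), assume $S$ is weak Gaussian and $B$ has very few zero-divisors, say $Z(B)=\bigcup_{k=1}^m Q_k$ with $Q_k=\Ann_B(f_k)\in\Ass(B)$, $f_k\neq 0$. I would contract: set $\mathbf{q}_k=Q_k\cap S$, a prime ideal of $S$. By McCoy, for each $k$ there is a nonzero $s_k\in S$ with $s_kf_k=0$, and I claim $\mathbf{q}_k=\Ann_S(s_k)$ — more precisely that $\mathbf{q}_k$ is an annihilator prime. One shows $\Ann_S(s_k)\subseteq\mathbf{q}_k$ is false in general, so instead I would argue that $Z(S)=Z(B)\cap S$: the inclusion $\subseteq$ is because $S\hookrightarrow B$, and $\supseteq$ because if $s\in S\cap Z(B)$ then $sf=0$ for some nonzero $f\in B$, and McCoy again yields a nonzero $t\in S$ with (running the argument on $sf=0$ after noting $c(sf)=sc(f)$) $s\cdot t=0$ for a suitable nonzero $t\in c(f)^N\cap S$ — here the weak Gaussian hypothesis enters to guarantee the relevant contractions stay prime and the radical manipulations behave. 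Then $Z(S)=\bigcup_k\mathbf{q}_k$ is a finite union of primes; to upgrade these to associated primes of $S$ I would intersect with the finitely many minimal primes over each $\mathbf{q}_k$ inside $Z(S)$ and use that annihilator ideals are subtractive together with a maximality argument à la Step 1 of Theorem \ref{noetherian} to realize each maximal prime of $Z(S)$ as some $\Ann_S(u)$.

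The main obstacle I anticipate is part (2): extracting from "$B$ has very few zero-divisors" the statement that the contracted primes are \emph{associated} primes of $S$ (not merely primes contained in $Z(S)$), and doing so without Noetherian hypotheses. The weak Gaussian assumption is presumably exactly what makes the contraction $Q_k\cap S$ behave well — via Proposition \ref{weakcontentformula1}, $\mathbf{q}_kB$ is prime (or all of $B$) and one can compare it with $Q_k$ — but pinning down that $Q_k=(Q_k\cap S)B$ and that $Q_k\cap S$ is an annihilator will require care. I expect the proof will mirror the corresponding ring-theoretic result from \cite{Na}, with the subtractivity of annihilator ideals (noted in the paragraph before Theorem \ref{noetherian}) substituting for facts about ideals in rings, and with the weak Gaussian condition invoked precisely where the ring proof uses that primes of $R[X]$ contract correctly.
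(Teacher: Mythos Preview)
Your part (1) is correct and matches the paper's argument; your use of Dedekind--Mertens to show $\mathbf{p}B\supseteq\Ann_B(a)$ is a slight detour (the identity $c(af)=ac(f)$ gives $ac(f)=0$ directly from $af=0$, so $c(f)\subseteq\Ann_S(a)=\mathbf{p}$ without passing through radicals), but the argument is sound and the overall structure --- McCoy plus Prime Avoidance over the subtractive annihilator primes --- is exactly what the paper does.

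Part (2) has a genuine gap. You correctly reduce to showing that each $\mathbf{q}_k=Q_k\cap S$ is an associated prime of $S$, but your proposed route --- a ``maximality argument \`a la Step~1 of Theorem~\ref{noetherian}'' --- relies on the existence of maximal elements among annihilator ideals, which needs a Noetherian hypothesis you do not have. The paper's proof bypasses this with a direct computation: if $Q_i=\Ann_B(f_i)$, then for $s\in S$ one has $s\in Q_i$ iff $sf_i=0$ iff $sc(f_i)=c(sf_i)=0$, so $Q_i\cap S=\Ann_S(c(f_i))$ is already an annihilator ideal. Writing $c(f_i)=(s_1,\ldots,s_m)$ with $s_1\neq 0$, one obtains the chain
\[
Q_i\cap S \;=\; \Ann_S\bigl(c(f_i)\bigr) \;\subseteq\; \Ann_S(s_1) \;\subseteq\; Z(S) \;=\; \bigcup_j (Q_j\cap S).
\]
Prime Avoidance then gives $\Ann_S(s_1)\subseteq Q_j\cap S$ for some $j$; after discarding redundancies so that the $Q_k\cap S$ are pairwise non-comparable, this forces $j=i$, whence $Q_i\cap S=\Ann_S(s_1)\in\Ass(S)$. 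Note also that you have misplaced where the weak Gaussian hypothesis enters: it is not needed for $Z(S)=Z(B)\cap S$ (that follows from $c(sf)=sc(f)$ alone, as you essentially observe), but rather to guarantee that every prime of $S$ --- in particular each $Q_j\cap S$ --- is subtractive, so that the semiring version of Prime Avoidance applies in the displayed chain.
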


\begin{proof}
 $(1)$: Let $Z(S) = \textbf{p}_1\cup \textbf{p}_2\cup \cdots \cup \textbf{p}_n$, where $\textbf{p}_i \in \Ass(S)$ for all $1 \leq i \leq n$. We show that $Z(B) = \textbf{p}_1B\cup \textbf{p}_2B\cup \cdots \cup \textbf{p}_nB$. Let $g \in Z(B)$, so there exists an $s \in S- \lbrace 0 \rbrace $ such that $sg = 0$ and so $sc(g) = (0)$. Therefore $c(g) \subseteq Z(S)$ and by considering that each ideal in $\Ass(S)$ is subtractive and according to the Prime Avoidance Theorem for semirings, we have $c(g) \subseteq \textbf{p}_i$, for some $1 \leq i \leq n$ and therefore $g \in \textbf{p}_iB$. Now let $g \in \textbf{p}_1B\cup \textbf{p}_2B\cup \cdots \cup \textbf{p}_nB$, so there exists an $i$ such that $g \in \textbf{p}_iB$ and therefore $c(g) \subseteq \textbf{p}_i$ and $c(g)$ has a nonzero annihilator and this means that g is a zero-divisor of $B$. Note that $\textbf{p}_iB \in \Ass(B)$, for all $1 \leq i \leq n$.

$(2)$: Let $Z(B)= \cup _{i=1}^n Q_i$, where $Q_i \in \Ass(B)$ for all $1\leq i \leq n$. Therefore $Z(S) = \cup _{i=1}^n (Q_i \cap S)$. Without loss of generality, we can assume that $Q_i \cap S \nsubseteq Q_j \cap S$ for all $i \neq j$. Now we prove that $Q_i \cap S\in \Ass(S)$ for all $1 \leq i \leq n$. Consider $f\in B$ such that $Q_i = \Ann (f)$ and $c(f)=(s_1,s_2,\ldots,s_m)$. It is easy to see that $Q_i \cap S = \Ann (c(f)) \subseteq \Ann(s_1) \subseteq Z(S)$ and since every prime ideal of $S$ is subtractive, by Prime Avoidance Theorem for semirings, $Q_i \cap S =\Ann(s_1)$.
\end{proof}

\begin{corollary}

Let $S$ be a subtractive semiring and $\Lambda , \Delta$ be two index sets. Then $S$ has very few zero-divisors iff $S[X_\Lambda , {X_\Delta}^{-1}]$ has very few zero-divisors.

\end{corollary}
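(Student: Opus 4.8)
The plan is to realize $B := S[X_\Lambda , {X_\Delta}^{-1}]$ as a content $S$-semialgebra and then to read off both implications directly from Theorem~\ref{veryfewzerodivisor}, using that a subtractive semiring is automatically weak Gaussian. So the first step is to verify that $B$ is a content $S$-semialgebra in the sense of Definition~\ref{defcontentsemialgebra}. The structure homomorphism $S \hookrightarrow B$ is obviously injective, and the content function $c$ sending a Laurent polynomial to the $S$-ideal generated by its coefficients satisfies $f \in IB$ iff $c(f) \subseteq I$ for every ideal $I$ of $S$ — this is precisely the content-semimodule property for $S[X_\Lambda , {X_\Delta}^{-1}]$ recorded in Section~5 via Proposition~\ref{contentsemimodule}. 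Conditions $c(sf) = s\,c(f)$ and $c(1) = S$ come from Proposition~\ref{contentformulas}, and the Dedekind--Mertens identity (clause (3) of Definition~\ref{defcontentsemialgebra}) holds here exactly because $S$ is subtractive, by Theorem~\ref{dedekindmertens4}. Hence $B$ is a content $S$-semialgebra.

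The second step is to observe that, since $S$ is subtractive, every ideal of $S$ — in particular every prime ideal — is subtractive, so by Theorem~\ref{WDMcriteria1} the semiring $S$ is weak Gaussian. With these two facts in hand, the forward direction of the corollary is immediate from Theorem~\ref{veryfewzerodivisor}(1): if $S$ has very few zero-divisors, then the content $S$-semialgebra $B$ has very few zero-divisors. For the converse, I would apply Theorem~\ref{veryfewzerodivisor}(2) to the weak Gaussian semiring $S$ and the content $S$-semialgebra $B$: if $B$ has very few zero-divisors, then so does $S$. Combining the two gives the stated equivalence.

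I do not expect a genuine obstacle here; the corollary is essentially a packaging of the previous theorem. The only point requiring a little care is checking the three clauses of Definition~\ref{defcontentsemialgebra} for $B$ when $\Lambda$ or $\Delta$ is infinite, but every finite collection of monomials involves only finitely many indeterminates at a time, so Theorem~\ref{dedekindmertens4} and the content-semimodule assertions of Section~5 already handle this uniformly. If one wishes, the same argument specializes verbatim to ordinary polynomial semirings $S[X_\Lambda]$ by taking $\Delta = \emptyset$.
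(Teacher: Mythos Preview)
Your proposal is correct and matches the paper's approach: the corollary is stated in the paper with no explicit proof, immediately after Theorem~\ref{veryfewzerodivisor}, and relies precisely on the two facts you invoke --- that $S[X_\Lambda , {X_\Delta}^{-1}]$ is a content $S$-semialgebra when $S$ is subtractive (noted explicitly right after Definition~\ref{defcontentsemialgebra}), and that a subtractive semiring is weak Gaussian (Theorem~\ref{WDMcriteria1}).
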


There are non-Noetherian semirings that have very few zero-divisors. For example, let for the moment, $S$ be a subtractive semiring that it has very few zero-divisors and $X_1, X_2 , \cdots$ be infinitely many indeterminates. Then according to Theorem \ref{veryfewzerodivisor}, the semiring $S[X_1, X_2 , \cdots]$ has very few zero-divisors while it is not Noetherian. On the other hand, there are semirings (actually rings) that they have few zero-divisors but not very few zero-divisors \cite[Remark 10]{Na}.

\section{Content Semialgebras over Semirings Having Property (A)}

Now we give the following definition that is similar to the definition of Property (A) for rings in [HK] and prove some other results for content semialgebras.

\begin{definition}
  A semiring $S$ has \textit{Property (A)}, if each finitely generated ideal $I \subseteq Z(S)$ has a nonzero annihilator.
 \end{definition}

Let $S$ be a semiring. If $S$ has very few zero-divisors, then $S$ has Property (A), but there are some non-Noetherian semirings which do not have Property (A) [K, Exercise 7, p. 63]. The class of non-Noetherian semirings having Property (A) is quite large [H, p. 2].

\begin{theorem}
 Let $B$ be a content $S$-semialgebra such that the content function $c: B \longrightarrow \FId(S)$ is onto, where by $\FId(S)$, we mean the set of finitely generated ideals of $S$. Then the following statements are equivalent:

\begin{enumerate}
 \item $S$ has Property (A),
 \item For all $f \in B$, $f$ is a regular element of $B$ iff $c(f)$ is a regular ideal of $S$.
\end{enumerate}

\begin{proof}
 $(1) \rightarrow (2)$: Let $S$ have Property (A). If $f \in B$ is regular, then for all nonzero $s \in S$, $sf \not= 0$ and so for all nonzero $s \in S$, $sc(f) \not= (0)$, i.e. $\Ann(c(f)) = (0)$ and according to the definition of Property (A), $c(f) \not\subseteq Z(S)$. This means that $c(f)$ is a regular ideal of $S$. Now let $c(f)$ be a regular ideal of $S$, so $c(f) \not\subseteq Z(S)$ and therefore $\Ann(c(f)) = (0)$. This means that for all nonzero $s \in S$, $sc(f) \not= (0)$, hence for all nonzero $s \in S$, $sf \not= 0$. Since $B$ is a content $S$-semialgebra, $f$ is not a zero-divisor of $B$.

$(2) \rightarrow (1)$: Let $I$ be a finitely generated ideal of $S$ such that $I \subseteq Z(S)$. Since the content function $c: B \longrightarrow \FId(S)$ is onto, there exists an $f \in B$ such that $c(f) = I$. But $c(f)$ is not a regular ideal of $S$, therefore according to our assumption, $f$ is not a regular element of $B$. Since $B$ is a content $S$-semialgebra, there exists a nonzero $s \in S$ such that $sf = 0$ and this means that $sI = (0)$, i.e. $I$ has a nonzero annihilator.
\end{proof}

\begin{remark}
 In the above theorem the surjectivity condition for the content function $c$ is necessary, because obviously $S$ is a content $S$-semialgebra and the condition (2) is satisfied, while one can choose the semiring $S$ such that it does not have Property (A) [K, Exercise 7, p. 63].
\end{remark}

\end{theorem}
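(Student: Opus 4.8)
The plan is to push everything through McCoy's property (Proposition~\ref{mccoyproperty}(6)) so as to translate the statement ``$f$ is a regular element of $B$'' into a condition living entirely inside $S$, namely $\Ann_S(c(f)) = (0)$. First I would set up the following dictionary: for $f\in B$ and $s\in S$ one has $sf=0$ iff $sc(f)=(0)$. Indeed, $sf=0$ gives $sc(f)=c(sf)=c(0)=(0)$ by condition (2) of Definition~\ref{defcontentsemialgebra}, and conversely $sc(f)=(0)$ forces $sf\in sc(f)B=(0)$ since $f\in c(f)B$ (Proposition~\ref{mccoyproperty}(1)). Combining this with McCoy's property and the injectivity of $\lambda$ (so that $S\subseteq B$), $f$ fails to be regular in $B$ iff there is a nonzero $g\in B$ with $fg=0$ iff there is a nonzero $s\in S$ with $sf=0$ iff $\Ann_S(c(f))\neq(0)$. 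Thus condition (2) of the theorem is equivalent to: for every $f\in B$, $\Ann_S(c(f))=(0)$ iff $c(f)\not\subseteq Z(S)$. Since $c(f)$ is always finitely generated (Proposition~\ref{mccoyproperty}(2)) and, conversely, every finitely generated ideal of $S$ equals some $c(f)$ by the surjectivity hypothesis, this reformulated condition says exactly: every finitely generated ideal $I$ of $S$ satisfies $\Ann_S(I)=(0)$ iff $I\not\subseteq Z(S)$. One direction of this ``iff'' is automatic (an ideal meeting $S\setminus Z(S)$ has zero annihilator), so the content is precisely the contrapositive form of Property~(A).

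For $(1)\Rightarrow(2)$ I would verify both halves of the ``iff'' in (2). If $f$ is regular in $B$, the dictionary gives $\Ann_S(c(f))=(0)$; as $c(f)$ is finitely generated, Property~(A) (in contrapositive form) yields $c(f)\not\subseteq Z(S)$, i.e.\ $c(f)$ is a regular ideal. Conversely, if $c(f)$ is a regular ideal it contains a regular element $a\in S$, so $\Ann_S(c(f))\subseteq\Ann_S(a)=(0)$, and the dictionary returns that $f$ is regular in $B$; note this direction does not use Property~(A) at all.

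For $(2)\Rightarrow(1)$ I would take a finitely generated ideal $I\subseteq Z(S)$ and use surjectivity of $c$ to choose $f\in B$ with $c(f)=I$. Then $c(f)$ is not a regular ideal, so by (2) the element $f$ is not regular in $B$; hence $fg=0$ for some nonzero $g\in B$, and McCoy's property produces a nonzero $s\in S$ with $sf=0$. The dictionary then gives $sI=sc(f)=(0)$, so $I$ has a nonzero annihilator, which is Property~(A).

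The only genuine subtlety I anticipate is the dictionary step itself — establishing that ``$f$ is a zero-divisor of $B$'', ``there is a nonzero $s\in S$ with $sf=0$'', and ``$\Ann_S(c(f))\neq(0)$'' all coincide; this is exactly where McCoy's property and the multiplicativity $c(sf)=sc(f)$ earn their keep, and once it is in place the remainder is bookkeeping with the definition of Property~(A). I would also flag that the surjectivity of $c$ is indispensable for $(2)\Rightarrow(1)$ — without it the choice of $f$ with $c(f)=I$ is unavailable — which is precisely the point of the remark following the theorem, where $B=S$ satisfies (2) trivially yet may fail (1).
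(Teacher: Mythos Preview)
Your proof is correct and follows essentially the same approach as the paper: both arguments hinge on McCoy's property together with the identity $c(sf)=sc(f)$ to reduce regularity of $f$ in $B$ to the vanishing of $\Ann_S(c(f))$, then invoke Property~(A) and surjectivity of $c$ exactly as you do. Your write-up is in fact a bit more transparent than the paper's, since you isolate the ``dictionary'' $sf=0\iff sc(f)=(0)$ explicitly and correctly observe that the implication ``$c(f)$ regular $\Rightarrow$ $f$ regular'' does not require Property~(A).
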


An element $r$ of a ring $R$ is said to be \textit{prime to an ideal} $I$ of $R$ if $I : (r) =I$, where by $I : (r)$, we mean the set of all elements $d$ of $R$ such that $dr \in I$ \cite[p. 223]{ZS}. Let $I$ be an ideal of $R$. We denote the set of all elements of $R$ that are not prime to $I$ by $S(I)$. It is obvious that $r\in S(I)$ iff $r+I$ is a zero-divisor of the quotient ring $R/I$. The ideal $I$ is said to be primal if $S(I)$ forms an ideal and in such a case, $S(I)$ is a prime ideal of $R$. A ring $R$ is said to be \textit{primal}, if the zero ideal of $R$ is primal \cite{Dau}. It is obvious that $R$ is primal iff $Z(R)$ is an ideal of $R$. This motivates us to define primal semirings.

\begin{definition}
A semiring $S$ is said to be primal if $Z(S)$ is an ideal of $S$.
\end{definition}

\begin{theorem}

\label{PrimalSemiring}

 Let $B$ be a content $S$-semialgebra. Then the following statements hold:

\begin{enumerate}
 \item If $B$ is primal, then $S$ is primal and $Z(B) = Z(S)B$;
 \item If $S$ is primal and has Property (A), then $B$ is primal, has Property (A) and $Z(B) = Z(S)B$;
 \item If the content function $c: B \longrightarrow \FId(S)$ is onto, then $B$ is primal iff $S$ is primal and has Property (A).
\end{enumerate}

\end{theorem}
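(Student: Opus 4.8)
The plan is to prove the three parts in the order they are stated, using the machinery of content semialgebras (especially Proposition \ref{mccoyproperty} and the Dedekind--Mertens formula) together with the two implications of the previous theorem relating Property (A) to the coincidence of regular elements with regular contents. Throughout, I will exploit the key fact from McCoy's Property: if $f\in B$ and $sf=0$ for some nonzero $s\in S$, then $s\,c(f)=(0)$, so $c(f)\subseteq Z(S)$; conversely, if $c(f)\subseteq Z(S)$ has a nonzero annihilator $s$, then $sf=0$ and $f\in Z(B)$. This already gives the ``set-theoretic'' half of the descriptions of $Z(B)$ once one controls extensions of ideals.

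First I would handle the identity $Z(B)=Z(S)B$ whenever it is claimed. For ``$\supseteq$'': if $g\in Z(S)B$ then $c(g)\subseteq Z(S)$; under Property (A) the finitely generated ideal $c(g)$ has a nonzero annihilator, hence $g\in Z(B)$ by McCoy. For ``$\subseteq$'': if $g\in Z(B)$, then by McCoy there is a nonzero $s\in S$ with $s\,c(g)=(0)$, so $c(g)\subseteq Z(S)$ and $g\in c(g)B\subseteq Z(S)B$ by Proposition \ref{mccoyproperty}(1); note this direction needs no hypothesis. So $Z(B)\subseteq Z(S)B$ always, and equality holds as soon as $S$ has Property (A). Then for (1): assume $B$ primal, i.e. $Z(B)$ is an ideal of $B$. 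I will show $Z(S)$ is an ideal of $S$ by checking closure under addition and scalar multiplication: given $a,b\in Z(S)$, they lie in $Z(B)$ (an element of $S$ that kills a nonzero element of $S$ also kills it in $B$), so $a+b\in Z(B)\cap S$; but $Z(B)\cap S\subseteq Z(S)$ since a nonzero annihilator in $B$ of $a+b$ has, by McCoy applied to the constant $a+b\in B$, a nonzero annihilator in $S$ — more directly, $Z(B)\cap S=Z(S)$ because $s\in S$ is a zero-divisor of $B$ iff $s\,c(g)=0$ for some $g\in Z(B)$, and restricting to $g\in S$ one recovers $Z(S)$; I will need to argue carefully that $Z(B)\cap S\subseteq Z(S)$, which follows because if $s\in S$ and $sg=0$ with $g\in B\setminus\{0\}$, then $s\,c(g)=(0)$ with $c(g)\neq(0)$, and any nonzero generator of $c(g)$ witnesses $s\in Z(S)$. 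Hence $Z(S)$ is an ideal, so $S$ is primal, and once $S$ is primal I still owe $Z(B)=Z(S)B$; for that I would show $S$ has Property (A) here too, or argue $Z(B)\subseteq Z(S)B$ always (done above) and $Z(S)B\subseteq Z(B)$ by noting $Z(S)B$ is the extension of the ideal $Z(S)$, is contained in $Z(B)$ generator-wise (each $a\in Z(S)$ is in $Z(B)$), and $Z(B)$ is an ideal — so the ideal it generates in $B$ stays inside $Z(B)$.

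For (2): assume $S$ is primal and has Property (A). Then $Z(B)=Z(S)B$ by the paragraph above, and $Z(S)B$ is an ideal of $B$ (extension of an ideal), so $B$ is primal. That $B$ has Property (A) is the real content: given a finitely generated ideal $\mathfrak{J}\subseteq Z(B)$ of $B$, I want a nonzero annihilator. Writing $\mathfrak{J}=(g_1,\dots,g_k)$ with each $g_i\in Z(B)=Z(S)B$, each $c(g_i)\subseteq Z(S)$; since $S$ is primal, $Z(S)$ is an ideal, so the finitely generated ideal $c(g_1)+\dots+c(g_k)$ is contained in $Z(S)$, and Property (A) of $S$ yields a nonzero $s\in S$ killing all $c(g_i)$, hence $sg_i=0$ for all $i$ (using $g_i\in c(g_i)B$), hence $s\,\mathfrak{J}=(0)$. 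This is the step I expect to be the main obstacle, because it requires combining the Dedekind--Mertens/McCoy machinery with the primality of $S$ to pass from finitely many elements of $B$ to a single finitely generated ideal of $S$ inside $Z(S)$; the primality hypothesis on $S$ is exactly what makes the sum of the contents stay in $Z(S)$.

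For (3): assume in addition that $c\colon B\to\FId(S)$ is onto. The direction ``$S$ primal with Property (A) $\Rightarrow$ $B$ primal'' is part (2). Conversely, assume $B$ is primal; by part (1), $S$ is primal, so it remains to show $S$ has Property (A). Given a finitely generated ideal $I\subseteq Z(S)$, surjectivity of $c$ furnishes $f\in B$ with $c(f)=I\subseteq Z(S)$; by the previous theorem (equivalence of Property (A) with ``$f$ regular iff $c(f)$ regular'') I would instead argue directly: since $c(f)\subseteq Z(S)$, I claim $f\in Z(B)$. Indeed $Z(S)B$ is an ideal of $B$ contained in the ideal $Z(B)$ (each element of $Z(S)$ is a zero-divisor of $B$, and $Z(B)$ is an ideal by primality of $B$), so $f\in c(f)B\subseteq Z(S)B\subseteq Z(B)$; thus $f$ is a zero-divisor, so there is a nonzero $s\in S$ (via McCoy) with $sf=0$, whence $s\,c(f)=sI=(0)$ and $I$ has a nonzero annihilator. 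That completes Property (A) for $S$, and hence (3). The only place surjectivity of $c$ is essential is realizing an arbitrary finitely generated ideal $I\subseteq Z(S)$ as some $c(f)$; everything else is formal manipulation with McCoy's Property and extensions of ideals.
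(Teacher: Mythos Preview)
Your proposal is correct and follows essentially the same route as the paper's proof: all three parts hinge on McCoy's Property (Proposition~\ref{mccoyproperty}(6)) to pass from a zero-divisor $f\in B$ to an element $s\in S$ with $s\,c(f)=(0)$, on the inclusion $f\in c(f)B$ to pass from $c(f)\subseteq Z(S)$ to $f\in Z(S)B$, and on Property~(A) plus primality of $S$ to obtain a common annihilator for a finite sum of contents. Your arguments for $Z(B)\cap S\subseteq Z(S)$ in~(1) and for $f\in Z(S)B\subseteq Z(B)$ in~(3) (using that $Z(B)$ is an ideal containing $Z(S)$) actually spell out steps the paper leaves as ``easy to check'' or passes over in one line, so if anything your version is slightly more explicit, but the strategy is identical.
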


\begin{proof}
$(1)$: Assume that $Z(B)$ is an ideal of $B$. We show that $Z(S)$ is an ideal of $S$. For doing that, it is enough to show that if $a,b \in Z(S)$, then $a+b \in Z(S)$. Let $a,b \in Z(S)$. Since $Z(S) \subseteq Z(B)$ and $Z(B)$ is an ideal of $B$, we have $a+b \in Z(B)$. This means that there exists a nonzero $g \in B$ such that $(a+b)g = 0$. Since $g\neq0$, we can choose $0 \neq d \in c(g)$ and we have $(a+b)d = 0$. Now it is easy to check that $Z(B) = Z(S)B$.

$(2)$: Let $S$ have Property (A) and $Z(S)$ be an ideal of $S$. We show that $Z(B) = Z(S)B$. Let $f \in Z(B)$, then according to McCoy's property for content semialgebras (Proposition \ref{mccoyproperty}), there exists a nonzero $s \in S$ such that $f.s = 0$. Therefore we have $c(f) \subseteq Z(S)$ and since $Z(S)$ is an ideal of $S$, $f \in Z(S)B$. Now let $f \in Z(S)B$, then $c(f) \subseteq Z(S)$. Since $S$ has Property (A), $c(f)$ has a nonzero annihilator and this means that $f$ is a zero-divisor in $B$. So we have already showed that $Z(B)$ is an ideal of $B$ and therefore $B$ is primal. Finally we prove that $B$ has Property (A). Assume that $J=(f_1,f_2, \ldots, f_n) \subseteq Z(B)$. Therefore $c(f_1), c(f_2),\ldots, c(f_n) \subseteq Z(S)$. But $Z(S)$ is an ideal of $S$ and $c(f_i)$ is a finitely generated ideal of $S$ for any $1\leq i \leq n$, so $I=c(f_1)+c(f_2)+\cdots + c(f_n) \subseteq Z(S)$ is a finitely generated ideal of $S$ and there exists a nonzero $s \in S$ such that $sI=0$. This causes $sJ=0$ and $J$ has a nonzero annihilator in $B$.

$(3)$: We just need to prove that if $B$ is primal, then $S$ has Property (A). For doing that, let $I \subseteq Z(S)$ be a finitely generated ideal of $S$. Since the content function is onto, there exists an $f\in B$ such that $I = c(f)$. Since $c(f) \subseteq Z(S)$, $f\in Z(B)$. According to McCoy's property for content semialgebras, we have $f \cdot s =0$ for some nonzero $s \in S$ and this means $I=c(f)$ has a nonzero annihilator and the proof is complete.
\end{proof}

\begin{corollary}
 Let $S$ be a subtractive semiring and $\Lambda, \Delta$ be two index sets such that either $\Lambda$ or $\Delta$ is nonempty. Then $S[X_\Lambda , {X_\Delta}^{-1}]$ is primal iff $S$ is primal and has Property (A).
\end{corollary}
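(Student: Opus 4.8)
The plan is to realize $B := S[X_\Lambda, {X_\Delta}^{-1}]$ as a content $S$-semialgebra whose content function surjects onto $\FId(S)$, and then simply invoke Theorem~\ref{PrimalSemiring}(3). So there are two things to check before the corollary becomes a one-line consequence: that $B$ is a content $S$-semialgebra in the sense of Definition~\ref{defcontentsemialgebra}, and that the coefficient-ideal map $c\colon B\longrightarrow \FId(S)$ hits every finitely generated ideal of $S$.

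First I would verify the three axioms of Definition~\ref{defcontentsemialgebra}. The structure map $S\to B$ sending $s$ to the constant $s$ is clearly injective, so $S$ is a subsemiring of $B$. Take the content of $f\in B$ to be the ideal $c(f)$ of $S$ generated by the coefficients of $f$. Then axiom (1) — that $f\in IB$ iff $c(f)\subseteq I$ for every ideal $I$ of $S$ — is exactly the statement that $B$ is a content $S$-semimodule with content function $c$; this was established in Section~5 (the corollary asserting that $S[X_\Lambda, {X_\Delta}^{-1}]$ is a content $S$-semimodule, together with the observation there that the intrinsic content $c_B$ coincides with the ideal generated by the coefficients). Axiom (2) is immediate: $c(sf)=s\,c(f)$ since the coefficients of $sf$ are the $s$-multiples of those of $f$, and $c(1)=S$ since $1$ is its own only coefficient. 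Axiom (3), the Dedekind-Mertens formula for all $f,g\in B$, is precisely what Theorem~\ref{dedekindmertens4} yields for the $S$-semimodule $M=S$, using the hypothesis that $S$ is subtractive (hence $S$ is a subtractive $S$-semimodule). Therefore $B$ is a content $S$-semialgebra.

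Next I would check that $c\colon B\longrightarrow \FId(S)$ is onto. Since $\Lambda\cup\Delta\neq\emptyset$, fix one indeterminate $Y$ among the $X_\lambda$ ($\lambda\in\Lambda$) and the $X_\delta^{\pm1}$ ($\delta\in\Delta$). Given any finitely generated ideal $I=(a_1,\dots,a_n)$ of $S$, the element $f=a_1+a_2Y+\cdots+a_nY^{n-1}\in B$ involves the pairwise distinct monomials $1,Y,\dots,Y^{n-1}$, so $c(f)=(a_1,\dots,a_n)=I$; and $c(0)=(0)$. Thus every finitely generated ideal of $S$ lies in the image of $c$.

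With both hypotheses of Theorem~\ref{PrimalSemiring}(3) verified for $B=S[X_\Lambda, {X_\Delta}^{-1}]$, that theorem gives at once that $B$ is primal iff $S$ is primal and has Property~(A), which is the assertion. I expect the only delicate step to be the verification of axiom (1) — reconciling the intrinsically defined content $c_B(f)=\bigcap\{I:f\in IB\}$ with the ideal generated by the coefficients of $f$ — but this is exactly the bookkeeping that the content-semimodule results of Sections~4 and~5 were designed to absorb, so no genuinely new obstacle arises; the surjectivity of $c$ and the reduction to Theorem~\ref{PrimalSemiring} are routine.
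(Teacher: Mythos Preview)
Your proposal is correct and follows exactly the route the paper intends: the corollary is stated without proof in the paper precisely because it is an immediate application of Theorem~\ref{PrimalSemiring}(3), once one knows that $S[X_\Lambda,{X_\Delta}^{-1}]$ is a content $S$-semialgebra (via Theorem~\ref{dedekindmertens4}, using subtractivity of $S$) with surjective content function (using a single indeterminate available since $\Lambda\cup\Delta\neq\emptyset$). Your write-up simply spells out these implicit verifications, and there is no gap.
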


Let $S$ be a weak Gaussian semiring that has few zero-divisors. One can consider $Z(S)= \cup _{i=1}^n \textbf{p}_i$ such that $\textbf{p}_i \nsubseteq \cup _{j=1 \wedge j \neq i}^n \textbf{p}_j$ for all $ 1\leq i \leq n$. Obviously we have $\textbf{p}_i \nsubseteq \textbf{p}_j$ for all $i \neq j$. Also, by using Prime Avoidance Theorem for semirings, it is easy to check that, if $Z(S)= \cup _{i=1}^n \textbf{p}_i$ and $Z(S)= \cup _{k=1}^m \textbf{q}_k$ such that $\textbf{p}_i \nsubseteq \textbf{p}_j$ for all $i \neq j$ and $\textbf{q}_k \nsubseteq \textbf{q}_l$ for all $k \neq l$, then $m=n$ and $\{\textbf{p}_1,\ldots,\textbf{p}_n\}=\{\textbf{q}_1,\ldots,\textbf{q}_n\}$, i.e. these prime ideals are uniquely determined. This is the base for the following definition:

\begin{definition}
 A weak Gaussian semiring $S$ is said to have \textit{few zero-divisors of degree} $n$, if $S$ has few zero-divisors and $n$ is the number of maximal primes of $Z(S)$. In such a case, we write $\zd(S) = n$.
\end{definition}

Note that if $Z(S)$ is an ideal of $S$, then it is prime. Hence $S$ is primal iff $S$ has few zero-divisors of degree one, i.e. $\zd(S)=1$. Now we generalize Theorem \ref{PrimalSemiring} as follows:

\begin{theorem}
 Let $B$ be a content $S$-semialgebra. Then the following statements hold for all natural numbers $n$:

\begin{enumerate}
 \item If $\zd(B) = n$, then $\zd(S) \leq n$;
 \item If $S$ is subtractive and has Property (A) and $\zd(S) = n$, then $\zd(B) = n$;
 \item If $S$ is subtractive and the content function $c: B \longrightarrow \FId(S)$ is onto, then $\zd(B) = n$ iff $\zd(S) = n$ and $S$ has Property (A).
\end{enumerate}

\end{theorem}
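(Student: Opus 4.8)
The plan is to prove (1) and (2) first and then derive (3) by combining them with the surjectivity hypothesis. Four ingredients will be used throughout: McCoy's property for content semialgebras (Proposition \ref{mccoyproperty}(6)); the extension--contraction behaviour of primes, namely that $\textbf{p}B$ is prime in $B$ and $\textbf{p}B\cap S=\textbf{p}$ for every prime $\textbf{p}$ of $S$ (Proposition \ref{mccoyproperty}(5); the contraction equality is immediate since $c(s)=(s)$ for $s\in S$); the relations $c(sf)=s\,c(f)$ and $f\in c(f)B$; and the Prime Avoidance Theorem for semirings, which applies to any family of \emph{subtractive} primes. A recurring sub-step is the equality $Z(B)\cap S=Z(S)$: the inclusion $Z(S)\subseteq Z(B)$ is trivial, and if $s\in S$ is a zero-divisor of $B$ then McCoy's property produces a nonzero element of $S$ annihilating it.

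For (1): since $\zd(B)=n$, the semiring $B$ is weak Gaussian, so by Theorem \ref{WDMcriteria1} every prime of $B$ is subtractive; as the contraction of a subtractive ideal is subtractive and $\textbf{q}B\cap S=\textbf{q}$ for every prime $\textbf{q}$ of $S$, every prime of $S$ is subtractive and $S$ is weak Gaussian. Writing $Z(B)=\bigcup_{i=1}^n Q_i$ with the $Q_i$ pairwise incomparable primes, the sub-step above gives $Z(S)=\bigcup_{i=1}^n(Q_i\cap S)$, a finite union of primes, so $S$ has few zero-divisors and $\zd(S)$ is defined. A short bookkeeping step finishes: by Prime Avoidance each maximal prime of $Z(S)$ lies in some $Q_i\cap S$, and each $Q_i\cap S$ lies in some maximal prime of $Z(S)$, so the maximal primes of $Z(S)$ form a subset of $\{Q_1\cap S,\dots,Q_n\cap S\}$ and $\zd(S)\le n$.

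For (2): $S$ subtractive is weak Gaussian, and $\zd(S)=n$ means $Z(S)=\bigcup_{i=1}^n\textbf{p}_i$ with the $\textbf{p}_i$ pairwise incomparable (subtractive) maximal primes. I would establish $Z(B)=\bigcup_{i=1}^n\textbf{p}_iB$ in two halves. If $g\in Z(B)$, McCoy's property gives a nonzero $s\in S$ with $sg=0$, so $s\,c(g)=c(sg)=(0)$ and $c(g)\subseteq Z(S)$; Prime Avoidance (the $\textbf{p}_i$ are subtractive primes) puts $c(g)\subseteq\textbf{p}_i$ for some $i$, i.e.\ $g\in\textbf{p}_iB$. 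Conversely, if $g\in\textbf{p}_iB$ then $c(g)\subseteq\textbf{p}_i\subseteq Z(S)$ is a finitely generated ideal, so Property (A) supplies a nonzero $s\in S$ with $s\,c(g)=(0)$, and since $g\in c(g)B$ this forces $sg=0$, so $g\in Z(B)$. The $\textbf{p}_iB$ are prime by Proposition \ref{mccoyproperty}(5) and pairwise incomparable because $\textbf{p}_iB\cap S=\textbf{p}_i$; identifying them with the maximal primes of $Z(B)$ yields $\zd(B)=n$. For (3): $(\Leftarrow)$ is precisely (2) (surjectivity is not used there); for $(\Rightarrow)$, part (1) gives $\zd(S)=m\le n$ and, using surjectivity of $c$ together with the McCoy/Prime-Avoidance computation, that $S$ has Property (A) — given a finitely generated $I\subseteq Z(S)$ pick $f\in B$ with $c(f)=I$; then $c(f)\subseteq Z(S)=\bigcup_j(Q_j\cap S)$ forces $c(f)\subseteq Q_j\cap S$ for some $j$, so $f\in(Q_j\cap S)B\subseteq Q_j\subseteq Z(B)$, and McCoy produces a nonzero $s\in S$ with $sf=0$, whence $sI=(0)$ — and then (2) applied to $S$ forces $m=n$, i.e.\ $\zd(S)=n$.

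I expect the main obstacle to be making sure, in (2) and (3), that $\zd(B)$ is even well-defined, i.e.\ that $B$ is a weak Gaussian semiring (equivalently, by Theorem \ref{WDMcriteria1}, that every prime of $B$ is subtractive); only then is the incomparable decomposition $Z(B)=\bigcup_i\textbf{p}_iB$ forced to be the canonical one, and only then can Prime Avoidance place an arbitrary prime $Q\subseteq Z(B)$ inside some $\textbf{p}_iB$ so that the $\textbf{p}_iB$ are exactly the maximal primes of $Z(B)$ and the count equals $n$. I would dispatch this by checking subtractivity of the relevant ideals of $B$ directly: for the intended application $B=S[X_\Lambda,{X_\Delta}^{-1}]$ over a subtractive $S$, the ideal $\textbf{p}[X_\Lambda,{X_\Delta}^{-1}]$ is subtractive whenever $\textbf{p}$ is — compare two polynomials monomial by monomial and apply subtractivity of $\textbf{p}$ to the coefficients — which simultaneously produces the closing corollary about $\zd(S[X_\Lambda,{X_\Delta}^{-1}])$.
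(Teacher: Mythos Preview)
Your argument follows the paper's own proof essentially step for step: part (1) contracts the maximal primes $Q_i$ of $Z(B)$ to $S$ and uses McCoy's property to identify $Z(S)$ with $\bigcup_i(Q_i\cap S)$; part (2) runs McCoy plus Prime Avoidance to obtain $Z(B)=\bigcup_i\textbf{p}_iB$ and then notes $\textbf{p}_iB\subseteq\textbf{p}_jB\Leftrightarrow\textbf{p}_i\subseteq\textbf{p}_j$; part (3) combines these with surjectivity of $c$ to force Property~(A) and then invokes (2) to pin down $\zd(S)=n$. Your extra care in (1)---deducing that $S$ is weak Gaussian because every prime $\textbf{q}$ of $S$ equals $\textbf{q}B\cap S$, the contraction of a subtractive prime of $B$---is a point the paper leaves implicit.

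Regarding the obstacle you flag, that $B$ itself must be weak Gaussian for $\zd(B)$ to be defined in (2) and (3): the paper does not verify this either. It simply asserts $\zd(B)=n$ once it has $Z(B)=\bigcup_i\textbf{p}_iB$ with the $\textbf{p}_iB$ pairwise incomparable primes, tacitly treating the weak-Gaussian hypothesis on $B$ as part of what ``$\zd(B)=n$'' entails. So the concern you raise is real and shared by the paper's own argument, and your proposed workaround---checking directly that $\textbf{p}[X_\Lambda,{X_\Delta}^{-1}]$ is subtractive coefficientwise when $S$ is subtractive---is exactly what is needed to make the closing corollary about $\zd(S[X_\Lambda,{X_\Delta}^{-1}])$ go through.
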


\begin{proof}
 $(1)$: Let $Z(B) = \bigcup _{i=1}^n Q_i$. We prove that $Z(S) = \bigcup _{i=1}^n (Q_i \cap S)$. In order to do that let $s\in Z(S)$. Since $Z(S) \subseteq Z(B)$, there exists an $i$ such that $s\in Q_i$ and therefore $s\in Q_i \cap S$. Now let $s\in Q_i \cap S$ for some $i$, then $s\in Z(B)$, and this means that there exists a nonzero $g\in B$ such that $sg = 0$ and at last $sc(g) = 0$. Choose a nonzero $d\in c(g)$ and we have $sd=0$.

$(2)$: Since $S$ is a subtractive semiring, similar to the proof of Theorem \ref{veryfewzerodivisor}, if $Z(S)= \bigcup _{i=1}^n \textbf{p}_i$, then $Z(B)= \bigcup _{i=1}^n \textbf{p}_iB$. Also it is obvious that $\textbf{p}_iB \subseteq \textbf{p}_jB$ iff $\textbf{p}_i \subseteq \textbf{p}_j$ for all $1 \leq i,j \leq n$. These two imply that $\zd(B) = n$.

$3$: $(\leftarrow)$ is nothing but $(2)$. For proving $(\rightarrow)$, consider that by $(1)$, we have $\zd(S) \leq n$. Now we prove that semiring $S$ has Property (A). Let $I \subseteq Z(S)$ be a finite ideal of $S$. Choose $f\in B$ such that $I=c(f)$. So $c(f) \subseteq Z(S)$ and by Prime Avoidance Theorem and $(1)$, there exists an $1 \leq i \leq n$ such that $c(f) \subseteq Q_i \cap S$. Therefore $f \in (Q_i \cap S)B$. But $(Q_i \cap S)B \subseteq Q_i$. So $f \in Z(B)$ and according to McCoy's property for content semialgebras, there exists a nonzero $s\in S$ such that $f.s=0$. This means that $I.s=0$ and $I$ has a nonzero annihilator. Now by $(2)$, we have $\zd(S) = n$.
\end{proof}

\begin{corollary}
 Let $S$ be a subtractive semiring and $\Lambda, \Delta$ be two index sets such that either $\Lambda$ or $\Delta$ is nonempty. Then for all natural numbers $n$, $\zd(S[X_\Lambda , {X_\Delta}^{-1}]) = n$ iff $\zd(S)=n$ and $S$ has Property (A).
\end{corollary}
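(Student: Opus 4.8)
The plan is to obtain this corollary as the special case $B = S[X_\Lambda,{X_\Delta}^{-1}]$ of the preceding theorem (part (3)), so the work lies entirely in checking that this $B$ meets that theorem's hypotheses. First I would confirm that $S[X_\Lambda,{X_\Delta}^{-1}]$ is a content $S$-semialgebra in the sense of Definition \ref{defcontentsemialgebra}: the structure map $S \hookrightarrow S[X_\Lambda,{X_\Delta}^{-1}]$ is injective; the coefficient ideal $c(f)$ satisfies $f \in IB$ precisely when $c(f)\subseteq I$, together with $c(sf)=sc(f)$ and $c(1)=S$ (these are immediate, and the content-semimodule property of $S[X_\Lambda,{X_\Delta}^{-1}]$ was already recorded in Section 4); and the Dedekind--Mertens formula required by Definition \ref{defcontentsemialgebra}(3) holds by Theorem \ref{dedekindmertens4}, which applies because $S$, being subtractive, is a subtractive $S$-semimodule. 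The same hypothesis, via Theorem \ref{WDMcriteria1}, also supplies the (implicit) weak-Gaussian property needed for $\zd(B)$ to be meaningful.

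Next I would verify the surjectivity of $c \colon B \longrightarrow \FId(S)$. Since $\Lambda \cup \Delta \neq \emptyset$, there is at least one indeterminate $X$ among the $X_\lambda$ and the ${X_\delta}^{-1}$; given a finitely generated ideal $I=(a_1,\dots,a_m)$ of $S$, the (Laurent) polynomial $f = a_1 + a_2 X + \dots + a_m X^{m-1}$ lies in $B$ and has $c(f) = (a_1,\dots,a_m) = I$, while $c(0)=(0)$ accounts for the zero ideal. Thus every finitely generated ideal of $S$ is a content, i.e. $c$ is onto $\FId(S)$.

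With these two verifications the hypotheses of part (3) of the preceding theorem are met for $B = S[X_\Lambda,{X_\Delta}^{-1}]$, and that part delivers exactly the asserted equivalence: for every natural number $n$, $\zd(S[X_\Lambda,{X_\Delta}^{-1}]) = n$ iff $\zd(S) = n$ and $S$ has Property (A). I expect no serious obstacle; the only points needing care are (i) matching the abstract notion of a content $S$-semialgebra with the concrete Laurent-polynomial construction, which is exactly what the general Dedekind--Mertens lemma from Section 1 handles, and (ii) the surjectivity of $c$, which is precisely where the hypothesis that $\Lambda$ or $\Delta$ is nonempty is genuinely used.
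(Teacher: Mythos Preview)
Your proposal is correct and follows exactly the intended route: the paper states this corollary without proof, leaving it as an immediate application of part (3) of the preceding theorem to $B = S[X_\Lambda,{X_\Delta}^{-1}]$, and you have correctly supplied the two verifications (content $S$-semialgebra via Theorem~\ref{dedekindmertens4}, and surjectivity of $c$ onto $\FId(S)$ using a variable guaranteed by $\Lambda\cup\Delta\neq\emptyset$) that make this application go through.
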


There are many rings having few zero-divisors of degree $n$. For example if $R$ is a primal ring, then $\zd(\prod_{i=1}^n R)=n$ (\cite[Remark 17]{Na}). In the following, we give some examples of semirings that are not rings but have few zero-divisors of degree $n$.

\begin{remark}

Let $S$ be a semiring. Then the following statements hold:

\begin{enumerate}

\item If $S = S_1\times S_2\times \dots \times S_n$ such that $S_i$ is a semiring having few zero-divisors of degree $k_i$ for all $1\leq i \leq n$, then $\zd(S) = \zd(S_1)+ \zd(S_2)+ \dots + \zd(S_n)$.
\item If we set $S=\prod_{i=1}^n T$, where $T=\{0,u,1\}$ is the semiring in Proposition \ref{WeakGaussian2}, then $S$ is subtractive, has Property (A) and $\zd(S) = \zd(S[X]) = n$.

\end{enumerate}

\end{remark}

\section*{Acknowledgment} The author is supported in part by University of Tehran and wishes to thank Prof. Dara Moazzami for his support and encouragement.

\end{document}